\documentclass[11pt]{amsart}
\usepackage{amssymb,amsmath,amsfonts,amscd,euscript,indentfirst}
\usepackage{tikz-cd}

\usepackage[backref=page]{hyperref}

\newtheorem{theoremA}{Theorem}

\numberwithin{equation}{section}
\newtheorem{thm}{Theorem}[section]
\newtheorem{prop}[thm]{Proposition}
\newtheorem{lem}[thm]{Lemma}
\newtheorem{cor}[thm]{Corollary}
\newtheorem{rem}[thm]{Remark}
\newtheorem{example}[thm]{Example}
\newtheorem{dfn}[thm]{Definition}

\newcommand{\la}{\lambda}
\newcommand{\al}{\alpha }
\newcommand{\be}{\beta }

\newcommand{\om}{\omega }

\newcommand{\T}{\otimes}
\newcommand{\Hom}{\mathrm{Hom}}

\newcommand{\Gr}{\mathrm{Gr}}

\newcommand{\Proj}{\mathrm{Proj}}
\newcommand{\Col}{\mathrm{Col}}
\newcommand{\G}{\mathrm{G}}

\newcommand{\msl}{\mathfrak{sl}}
\newcommand{\mgl}{\mathfrak{gl}}
\newcommand{\U}{\mathrm U}

\newcommand{\bC}{{\mathbb C}}
\newcommand{\bK}{{\mathbb K}}

\newcommand{\bZ}{{\mathbb Z}}

\newcommand{\bP}{{\mathbb P}}
\newcommand{\bA}{{\mathbb A}}
\newcommand{\bG}{{\mathbb G}}

\newcommand{\fp}{{\mathfrak p}}
\newcommand{\fg}{{\mathfrak g}}
\newcommand{\fb}{{\mathfrak b}}
\newcommand{\fh}{{\mathfrak h}}
\newcommand{\fn}{{\mathfrak n}}
\newcommand{\fa}{{\mathfrak a}}

\newcommand{\fl}{{\mathfrak l}}

\newcommand{\eO}{\EuScript{O}}
\newcommand{\eL}{\EuScript{L}}
\newcommand{\eU}{\EuScript{U}}
\newcommand{\eI}{\EuScript{I}}
\newcommand{\eF}{\EuScript{F}}
\newcommand{\eT}{\EuScript{T}}

\newcommand{\br}{{\bf r}}
\newcommand{\bs}{{\bf s}}
\newcommand{\Bl}{\mathrm{Bl}}

\begin{document}

\title[Truncated Grassmannians, blow-ups and collineations]
{Truncated Grassmannians, blow-ups along Schubert varieties and 
collineations}

\author{Evgeny Feigin}
\address{Evgeny Feigin:\newline
	School of Mathematical Sciences, Tel Aviv University, Tel Aviv, 69978, Israel}
\email{evgfeig@gmail.com}

\begin{abstract}
Truncated Grassmannians are defined as closures of orbits of abelian unipotent 
groups acting on the degree  truncations of projectivized wedge powers. 
We show that such truncations in a more general setup show up in the description
of the blow-ups of general flag varieties along Schubert subvarieties. 
We work out the case of Grassmannians in detail.
In particular, we show that our blow-ups are members of a larger family of varieties 
projecting onto Grassmannians, and describe the  fibers of these projections 
via the spaces of  collineations.     		
\end{abstract}

\maketitle

\section*{Introduction}
Let $\fg$ be a simple Lie algebra over the filed of complex numbers and let $G$ be the 
corresponding simple simply connected Lie group. For a standard parabolic subgroup $P$,
let $G/P$ be a generalized flag variety.
We  denote by $S_\sigma$ the Schubert variety in $G/P$ attached to the Weyl group element $\sigma$. 
In this paper we describe 
the blow-ups along $S_\sigma$ for certain Schubert varieties in the complex Grassmannians
via truncated Grassmannians. 
Not much is known about the blow-ups of flag varieties along Schubert subvarieties in general,
although some partial results (mostly in the smooth case) 
are available \cite{BS25,HKLS25,KP21,LSi21}. 
We formulate a general approach, based on the truncated flag varieties,
and work out the construction in detail for the Grassmannians. We provide more details below.

Let $V$ be a vector space of dimension $n$ and let us fix $d<n$; 
to simplify the notation, we assume that $d\le n-d$.
We fix 
a decomposition $V=V_d\oplus V_{n-d}$ fixing two subspaces of $V$ of dimensions 
$d$ and $n-d$.
Let 
$\Gr_d(V)$ be the Grassmannian of $d$-dimensional subspaces in $V$, which 
can be realized as the quotient 
$SL_n/P_d$, where $P_d$ is the maximal parabolic subgroup corresponding to the
$d$-th simple root; in particular, $P_d$ preserves $V_d$. Let $\fp_d$ be the Lie 
algebra of $P_d$ and let $\fa_d$ 
be the opposite abelian unipotent radical, i.e. $\fa_d\oplus\fp_d = \msl(V)$.
The group $\exp(\fa_d)$ acts on the Grassmannian and $\Gr_d(V)$ is the closure 
of the orbit $\exp(\fa_d)V_d$.

The Grassmannian $\Gr_d(V)$ admits the Pl\"ucker embedding 
$\Gr_d(V)\subset \bP(\Lambda^d V)$ \cite{Fu97}. Let us fix a basis $v_1,\dots,v_n$ of $V$ such that
the first $d$ vectors form a basis of $V_d$ and the last $n-d$ form a basis of $V_{n-d}$.
The wedge power $\Lambda^d V$ admits the standard basis of the form $v_I$ consisting of
wedge monomials, where $I$ is a cardinality $d$ subset of the set $\{1,\dots,n\}$.
We define the degree $\deg I$ as the number of elements of $I$ that are larger than $d$.
For a number $r=0,\dots,d$, we introduce the space $(\Lambda^dV)_r$, which is the 
quotient of $\Lambda^dV$ by the space spanned by all vectors $v_I$ of degree larger than 
$r$. Then  $(\Lambda^dV)_r$ inherits the cyclic action of $\fa_d$ with the cyclic vector $w_r$, 
the image of the cyclic vector of $\Lambda^dV$. 
 
By definition, the truncated Grassmannian $X_r$ is a subvariety of $\bP((\Lambda^dV)_r)$
defined as the closure of the $\exp(\fa_d)$ orbit of the line $[w_r]$. 
One easily sees that $X_r$ admits an action of a larger group $P_d^-$, where 
$P_d^-$ is the (opposite) maximal parabolic subgroup attached to the $d$-th simple root.
In particular,
$X_1$ is the projective space $\bP^{d(n-d)}$ and $X_d=\Gr_d(V)$. By definition, all
$X_r$ are pairwise birationally isomorphic $\bG_a^{d(n-d)}$ varieties \cite{A11,HT99,Fe12}. 
The closure
of the graph of the birational isomorphism $X_1\to X_d$ was studied in \cite{FSS25,BSS25}
(see also \cite{Fe24,Fe25}). Our goal is to study the graph closures 
of maps between various truncated
Grassmannians and to provide a link with the blow-ups of the classical Grassmann varieties.

For $r=1,\dots,d$ let $S_r\subset \Gr_d(V)$ be the Schubert subvariety whose points $U$ 
satisfy the condition $\dim U\cap V_{n-d}\ge r$. In particular, the codimension of 
$S_r$ is $r^2$ and $S_r$ exhaust all the 
$P_d^-$ invariant Schubert varieties. Our first theorem is as follows.   
  
\begin{theoremA}
The blow-up $\Bl_{S_r}\Gr_d(V)$ is isomorphic to the closure of the graph 
of the birational map $X_{r-1}\to\Gr_d(V)$. In particular, $\Bl_{S_r}\Gr_d(V)$ admits 
the action of $\exp(\fa_d)$ with an open dense orbit. 	
\end{theoremA}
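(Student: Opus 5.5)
The plan is to identify the blow-up with the closure of the graph via the standard description of a blow-up as the closure of the graph of the rational map defined by generators of the ideal sheaf. Concretely, $\Bl_{S_r}\Gr_d(V)$ is the closure of the graph of the rational map $\Gr_d(V)\dashrightarrow \bP(H^0(\eI_{S_r}(k)))$ for any $k$ such that $\eI_{S_r}(k)$ is globally generated, \emph{provided} that the Rees algebra $\bigoplus_m \eI_{S_r}^m$ is the ideal sheaf one blows up. So the first step is to check that the linear forms on $\Lambda^dV$ that survive after killing the truncation, i.e. the Pl\"ucker coordinates dual to $v_I$ with $\deg I\ge r$, cut out $S_r$ scheme-theoretically. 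The composition $\Gr_d(V)\dashrightarrow \bP((\Lambda^dV)_{r-1})$ is the projection from $\mathrm{span}\{v_I:\deg I\ge r\}$ ... wait: the quotient $(\Lambda^dV)_{r-1}$ keeps $v_I$ with $\deg I\le r-1$. Hence the rational map $\Gr_d(V)\dashrightarrow X_{r-1}$ is given by the Pl\"ucker coordinates $p_I$ with $\deg I\le r-1$. Its base locus is the set of $U$ for which all these vanish.

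The key combinatorial/geometric step is to show that this base locus is exactly $S_r$, with reduced structure. Set-theoretically, $p_I(U)=0$ for all $I$ with fewer than $r$ indices outside $\{1,\dots,d\}$. This is equivalent to saying that the projection $U\to V_d$ along $V_{n-d}$ has rank $\le d-r$, which in turn is equivalent to $\dim U\cap V_{n-d}\ge r$. Scheme-theoretically, I would work on the big cell $\exp(\fa_d)V_d$ translated by $P_d^-$. The $P_d^-$-translates of the affine chart $U=\mathrm{graph}(A)$, $A\in\Hom(V_d,V_{n-d})$, cover $S_r$. In this chart $p_I$ with $\deg I=j$ is, up to sign, a $j\times j$ minor of $A$. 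So the ideal generated by the $p_I$ with $\deg I\le r-1$, including $p_{\{1..d\}}=1$, is nonunit only after dehomogenizing differently. I must therefore choose charts centered at points of $S_r$: take the chart around a $T$-fixed point $v_J$ with $\deg J=s\ge r$. There the coordinates are a $d\times(n-d)$ matrix, and the $p_I$ become minors of a matrix. The condition becomes that the block mapping $J$-coordinates to $V_d$-coordinates has rank $\le s-r$, and the ideal is generated by the $(s-r+1)$-minors of an $s\times s$ submatrix. This is a determinantal ideal, which is prime and reduced by classical results, so it equals $\eI_{S_r}$ locally.

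Given this, the graph closure of the map defined by the sections $\{p_I:\deg I\le r-1\}$ of $\eO(1)$ is $\Proj\bigoplus_m \eI^m(m)$, where $\eI$ is the ideal they generate. That is precisely $\Bl_{S_r}\Gr_d(V)$, and the image lies in $X_{r-1}$ by definition of $X_{r-1}$ as the closure of the $\exp(\fa_d)$-orbit, which is the image of the big cell. The $\exp(\fa_d)$-action with dense orbit follows because the graph of an equivariant map is equivariant and the open cell of $\Gr_d(V)$ avoids $S_r$. The main obstacle is the scheme-theoretic identification: ensuring that the minors of size exactly $s-r+1$ (and not also smaller ones mixed with others) appear, and that the degree-$\le r-1$ Pl\"ucker coordinates generate the full determinantal ideal in every chart. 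For this I would lean on the known primality of Schubert ideals generated by Pl\"ucker coordinates (standard monomial theory).
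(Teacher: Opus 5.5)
Your argument is correct, but it does not follow the paper's proof of Theorem \ref{thm:main}. There, global generation of $\eI_{S_r}(1)$ comes from pushing the Koszul resolution of $\eI_{\widetilde S_r}$ down along $q:\widetilde S_r\to S_r$, using rational singularities and Borel--Weil--Bott (Lemma \ref{lem:complex}); this gives a surjection onto $\eI_{S_r}(1)$ from the globally generated sheaf $\eF_1(1)$. The paper then identifies $H^0(\eI_{S_r}(1))$ with $(\Lambda^dV)^*_{r-1}$ in Proposition \ref{prop:H0Gr} and embeds the Rees algebra.

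You instead check chart by chart that the sections $p_I$, $\deg I\le r-1$, generate the reduced ideal of $S_r$. This is the strict-transform argument of Section \ref{sec:general} specialized to $\Gr_d(V)$, with the scheme-theoretic content of Lemma \ref{lem:SchDem} checked by hand rather than quoted. The identification of $H^0$ is immediate: a linear form vanishing on $S_r$ kills every $v_I$ with $\deg I\ge r$, because the fixed points $[v_I]$ lie in $S_r$. So in both proofs the only real input is that these sections generate $\eI_{S_r}$. Your route gets this elementarily, and it also yields the local model $A_J\cap S_r\simeq Y_{s,s-r}\times\bA^{d(n-d)-s^2}$ behind Proposition \ref{prop:r-fibers}. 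The paper's route additionally produces a full resolution of $\eO_{S_r}$, which the statement does not need.

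The step you flag as the main obstacle needs no standard monomial theory. In the chart $A_J$ with $s=\deg J$, one has $p_I/p_J=\pm\det\varphi_{T,S}$ with $T=I\setminus J$ and $S=J\setminus I$. The condition $\deg I\le r-1$ says $|S\cap J_{>d}|-|T_{>d}|\ge s-r+1$. Laplace expansion along the columns $S\cap J_{>d}$, and then along the rows lying in $[d]$, puts $\det\varphi_{T,S}$ into the ideal of $(s-r+1)$-minors of $X_\varphi$. Conversely, each such minor, with rows $T_1$ and columns $S_1$, is itself $\pm p_I/p_J$ for $I=(J\setminus S_1)\cup T_1$, which has degree exactly $r-1$.

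Drop the false start about $P^-_d$-translates of the big cell. $P^-_d$ preserves $\exp(\fa_d)V_d$, so those translates never meet $S_r$; the charts $A_J$ are what you need.
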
 
 
We can restate the theorem in the following form: $\Bl_{S_r}\Gr_d(V)$ admits
a closed embedding into the product $\Gr_d(V)\times X_{r-1}$ with the image being the closure
of the $\exp(\fa_d)$ orbit of the point $V_d\times [w_{r-1}]$. 
This is a particular case of a general statement (Theorem \ref{thm:gen}); we give a 
separate proof in the case of Grassmannians since here we can explicitly compute 
the Koszul type resolution of the ideal sheaf $\eI_{S_r}$.

All the varieties $X_r$ and  $\Bl_{S_r}\Gr_d(V)$ are birationally isomorphic with the
isomorphism provided by the "common" open $\exp(\fa_d)$ orbit. We define a larger family
of pairwise birationally isomorphic $\bG_a^{d(n-d)}$ varieties,  
which includes all the above examples. More precisely, let $\br$ be a collection of 
numbers $1<r_1<\dots <r_m\le d$. We define $\Bl_\br$ as the multi-projective blow-up 
with respect to the varieties $S_{r_1},\dots, S_{r_m}$. 
$\Bl_\br$ can be also described as the closure 
of the $\exp(\fa_d)$ orbit through the point 
$V_d\times [w_{r_1-1}]\times\dots\times [w_{r_m-1}]$ inside the product of $\Gr_d(V)$ and
the corresponding truncated Grassmannians $X_{r_i-1}$. To describe the fibers of the natural
projection map  
$\Bl_\br\to\Gr_d(V)$ we consider the partial collineations $\Col_\bs$ \cite{L88,LAT89,Vain84}. 

Let $E_1,E_2$ be two spaces of dimension $N$ and let $\bs=(1\le s_1<\dots <s_m<N)$.
Then $\Col_\bs$ is a subvariety inside the product of projective spaces 
$\bP(\Hom(\Lambda^{s_i}E_1,\Lambda^{s_i}E_2))$, $i=1,\dots,m$ defined as the closure
of points of the form $([\Lambda^{s_i}\varphi])_{i=1}^m$ for maximal rank maps
$\varphi\in \Hom(E_1,E_2)$. The complete case ($\bs=(1,\dots,N-1)$) is known 
to be smooth \cite{Vain84,M20,Th99}. We prove the following theorem (see precise statement in
Proposition \ref{prop:rfibers}).

\begin{theoremA}
The fiber of the projection map $\Bl_\br\to \Gr_d(V)$ over a subspace $U$ is isomorphic 
to a collineation variety depending on $\dim U\cap V_{n-d}$. The complete mixed blow-up $\Bl_{(2,\dots,d)}$ is smooth
and serves as a desingularization for all other $\Bl_\br$. 
\end{theoremA}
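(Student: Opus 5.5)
We describe the fibers via the $P_d^-$-action, and then deduce smoothness from the structure of the complete collineation variety.

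\emph{Step 1: reduction to one point per stratum.} The group $P_d^-$ acts on $\Gr_d(V)$ preserving each $S_r$. Hence it acts on $\Bl_\br$, and the projection $\pi\colon\Bl_\br\to\Gr_d(V)$ is equivariant. The $P_d^-$-orbits on $\Gr_d(V)$ are the strata $S_k\setminus S_{k+1}$, where $k=\dim U\cap V_{n-d}$. So it suffices to compute the fiber over one representative $U_k$ per $k$, for instance $U_k=\mathrm{span}(v_1,\dots,v_{d-k},v_{d+1},\dots,v_{d+k})$.

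\emph{Step 2: local coordinates.} Near $U_k$ we use the big cell $\exp(\fa)U_k$, where $\fa$ is the abelian radical opposite to the stabilizer of $U_k$. Transversally to the stratum, $S_r$ is locally (stratum)$\times D_{r-k}$, where $D_j\subset \Hom(\bC^k,\bC^k)$ is the determinantal variety of $k\times k$ matrices $\varphi$ of rank $\le k-j$. Here $\varphi$ is the block of coordinates recording how $U$ meets $V_{n-d}$ beyond $U_k$. More precisely, the ideal of $S_r$ near $U_k$ is generated by the $(k-r+1)$-minors of $\varphi$. I will check this from the Plücker description: the Plücker coordinates of degree $\ge r$ cut out $S_r$, which is the ideal used to define $X_{r-1}$ in Theorem A. Then the multi-projective blow-up along ideals generated by the minors of orders $k-r_i+1$ is, over $\varphi$, the closure of the graph
\[
\varphi\mapsto([\Lambda^{k-r_i+1}\varphi])_i
\]
inside $\prod_i\bP(\Hom(\Lambda^{k-r_i+1}\bC^k,\Lambda^{k-r_i+1}\bC^k))$, restricted to those $i$ with $r_i\le k$. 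The factors with $r_i>k$ do not meet $U_k$, so they contribute nothing to the fiber.

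\emph{Step 3: the fiber.} The fiber over $\varphi=0$ of this graph closure is obtained by degenerating along lines $t\varphi$ and, more generally, along arcs. Using the $GL_k\times GL_k$-equivariance and homogeneity in $\varphi$, the fiber over $0$ equals the closure of $([\Lambda^{s_i}\varphi])_i$ over all invertible $\varphi$, with $s_i=k-r_i+1$. This is exactly $\Col_\bs$ with $N=k$. The homogeneity is what makes this work: the graph closure is a cone-like family over $\varphi$. This identifies the fibers as stated in Proposition \ref{prop:rfibers}.

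\emph{Step 4: smoothness for $\br=(2,\dots,d)$.} In this case the local model is the complete collineation blow-up of $\Hom(\bC^k,\bC^k)$ along all determinantal loci, taken in the graph form. Vainsencher's theorem says this is smooth: it is the iterated blow-up of $\Hom(\bC^k,\bC^k)$ along the successive strict transforms of $D_j$. Multiplying by the smooth stratum directions gives smoothness of $\Bl_{(2,\dots,d)}$. For any other $\br$, forgetting factors gives a projection $\Bl_{(2,\dots,d)}\to\Bl_\br$. This map is proper and birational, being an isomorphism on the open $\exp(\fa_d)$-orbit, so it is a desingularization.

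\emph{Main obstacle.} The hardest part is Step 2: showing the local product structure of the ideal of $S_r$, not merely of its reduced set, compatibly with the Plücker truncation. This is what lets the graph-closure description from Theorem A be transported to the determinantal model. I would first try to prove it by writing Plücker coordinates on $\exp(\fa)U_k$ as minors of the full coordinate matrix. Then I would use Laplace expansion to show that the degree $\ge r$ minors generate the same ideal as the $(k-r+1)$-minors of $\varphi$ after the invertible change of coordinates along the stratum.
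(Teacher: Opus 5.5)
Your proposal is correct, and for the fiber description it follows the paper's argument. The paper also works in the standard chart $A_J$ around $p_J$ with $|J_{>d}|=m$. It shows $\dim U_\varphi\cap V_{n-d}=m-\mathrm{rk}X_\varphi$, so that $A_J\cap S_r\simeq Y_{m,m-r}\times\bA^{d(n-d)-m^2}$. It then realizes $\Bl_\br$ over the chart as the closure of the graph of $\varphi\mapsto([\Lambda^{m+1-r_i}X_\varphi])_i$, dropping the $r_i>m$, and reads off the fiber over $\varphi=0$ as the collineation variety. Your use of $P_d^-$-transitivity on the strata is its ``shifting the center of the cell''.

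Two corrections to your Step 2.
\begin{itemize}
\item You have the Pl\"ucker coordinates reversed. $S_r$ lies in $\bP(\mathrm{span}(v_I,\ \deg I\ge r))$, so it is cut out by the $\Delta_I$ with $\deg I\le r-1$. These are the coordinates of $(\Lambda^dV)_{r-1}$, i.e.\ the sections of $\eI_{S_r}(1)$. By contrast, $\Delta_{I(r)}$ (degree $r$) equals $1$ at $p_{I(r)}\in S_r$.
\item What you call the main obstacle is not needed. The blow-up is along the reduced subvariety $S_r$, and the rank computation gives $S_r\cap A_J$ set-theoretically. The ideal of $(m-r+1)$-minors of a generic $m\times m$ matrix is prime, so it is already the ideal of $S_r\cap A_J$. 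No comparison with Pl\"ucker ideals or Laplace expansion is required.
\end{itemize}

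For smoothness your route differs from the paper's. You treat every stratum and invoke Vainsencher's iterated blow-up directly for the affine local model. The paper instead uses that the singular locus is closed and $P_d^-$-invariant to reduce to $U\in S_d$. It then shows that $\pi_{\max}^{-1}A_U$ is a vector bundle over $\Col_{(1,\dots,d-1)}(U,V_d)$ with fiber $\Hom(U,V_{n-d}/U)\times\mathrm{span}(p_1)$, so only smoothness of the projective complete collineation space is used.

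Your version also works, but Vainsencher's theorem concerns the projective variety built from $\bP\Hom(\bC^k,\bC^k)$. To transfer it to your affine graph closure you need the same observation the paper makes. Consider the set of pairs $(X,p)$ with $p\in\Col_{(1,\dots,k-1)}$ and $X$ on the line $p_1\subset\Hom(\bC^k,\bC^k)$. This set is closed, irreducible, of dimension $k^2$, and contains the graph. Hence it equals the graph closure, which is therefore the total space of a line bundle over a smooth variety. With that step made explicit, your per-stratum argument does not need the paper's reduction to $S_d$.
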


Before passing to a more general situation, let us mention two similar constructions found in 
the literature. First, in \cite{Fa26,FW25,FZ21} the authors consider the blow-ups of Grassmannians 
along zero schemes of Pl\"ucker coordinates of fixed degree.
They define the generalized Kausz compactifications \cite{Kaus00} and describe
the link to the spaces of collineations. Second, in \cite{FO24,FGNP26,Sv25} the authors study
truncated Grassmannians for certain allowed sets of degrees. Our setup is different, since
we want to keep the $\exp(\fa_d)$ (and $P_d^-$) symmetry.       

Finally, let us describe the general case. 
Let $G$ be a simple simply-connected Lie group with the   flag variety $G/B$
(the case of arbitrary $G/P$ is similar).
Let $\la$ be a regular dominant integral weight and let $L(\la)$ be the corresponding
irreducible highest weight $\fg$ module with the highest weight vector $v(\la)$. 
We fix the projective embedding $G/B\subset \bP(L(\la))$ with the image being the closure 
of the $B_-$ orbit through the highest weight line $[v(\la)]$.
Let $\eO(\la)$ be the line bundle on $G/B$ obtained as a pull back of $\eO(1)$ on 
$\bP(L(\la))$.
For a Weyl group element $\sigma$ let $v(\sigma\la)\in L(\la)$ be the corresponding 
extremal weight vector. The (opposite) Demazure module $D(\sigma\la)\subset L(\la)$
is the $\U(\fb_-)$ span of $v(\sigma\la)$. The (opposite) Schubert variety 
$S_\sigma$ sits inside $\bP(D(\sigma\la))$ as the closure of the $B_-$ orbit of 
the line $[v(\sigma\la)]$. 

We define the truncated $\fb_-$ module $L_\sigma(\la)$ as the quotient $L(\la)/D(\sigma\la)$
and denote the image of $v(\la)$ as $v_\sigma(\la)$; note that the truncated module
$L_\sigma(\la)$ is cyclic with cyclic vector $v_\sigma(\la)$. We also define the 
truncated flag variety $X_\sigma(\la)\subset \bP(L_\sigma(\la))$ as the closure 
of $B_-[v_\sigma(\la)]$.

\begin{theoremA}\label{thm:gen}
The blow-up of $G/B$ along the Schubert variety $S_\sigma$ admits a closed embedding
into $G/B\times X_\sigma(\la)$ as the closure of the $B_-$ orbit through the product
$[v(\la)]\times [v_\sigma(\la)]$. 
\end{theoremA}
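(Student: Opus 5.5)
The plan is to use the standard description of a blow-up as the closure of the graph of the rational map defined by generators of the ideal. The blow-up of $G/B$ along a closed subscheme $Z$ is $\mathrm{Proj}\bigoplus_k \eI_Z^k$. If $\eI_Z\otimes\eO(\la)$ is globally generated by sections $s_0,\dots,s_M$, then the blow-up embeds into $G/B\times\bP^M$ as the closure of the graph of $[s_0:\dots:s_M]$ restricted to $G/B\setminus Z$.

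\textbf{Step 1: identify the generating sections.} The sections of $\eO(\la)$ are $H^0(G/B,\eO(\la))=L(\la)^*$, coming from linear forms on $\bP(L(\la))$. We take the subspace
\[
D(\sigma\la)^\perp=L_\sigma(\la)^*\subset L(\la)^*
\]
of linear forms vanishing on $D(\sigma\la)$. These forms vanish on $S_\sigma\subset\bP(D(\sigma\la))$. By the standard results on Schubert varieties (projective normality and the fact that $S_\sigma$ is scheme-theoretically the intersection of $G/B$ with the linear span $\bP(D(\sigma\la))$, as in Ramanathan / Mehta--Ramanathan via Frobenius splitting), these forms generate the ideal sheaf twisted by $\la$:
\[
H^0(G/B,\eI_{S_\sigma}(\la))=D(\sigma\la)^\perp,
\]
and $\eI_{S_\sigma}(\la)$ is globally generated by this space. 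Regularity of $\la$ is used here, together with surjectivity of $L(\la)^*\to H^0(S_\sigma,\eO(\la))$ and vanishing of the higher cohomology.

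\textbf{Step 2: compare the rational map with the projection.} The rational map given by these sections sends $x=[v]\in G/B\setminus S_\sigma$ to $[v \bmod D(\sigma\la)]\in\bP(L_\sigma(\la))$, i.e. it is the restriction of the linear projection $\bP(L(\la))\dashrightarrow\bP(L_\sigma(\la))$. This map is $B_-$-equivariant because $D(\sigma\la)$ is $\fb_-$-stable. Hence the graph of the map over the open cell $B_-[v(\la)]$ is the $B_-$-orbit of $[v(\la)]\times[v_\sigma(\la)]$. Its image in the second factor is $B_-[v_\sigma(\la)]$, whose closure is $X_\sigma(\la)$.

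\textbf{Step 3: take closures.} The blow-up is irreducible and contains the open cell of $G/B$ (which avoids $S_\sigma$ whenever $\sigma\neq e$) as a dense open subset. Therefore its image in $G/B\times\bP(L_\sigma(\la))$ is the closure of this orbit, and it lies in $G/B\times X_\sigma(\la)$. The embedding is closed because the Rees algebra is a quotient of $\mathrm{Sym}(D(\sigma\la)^\perp)\otimes\bigoplus_k H^0(\eO(k\la))$.

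\textbf{Main obstacle.} This is Step 1: proving that the linear sections cut out $S_\sigma$ scheme-theoretically. Strictly, Proj of the Rees algebra requires only generation of the sheaf $\eI_{S_\sigma}(\la)$, not of its powers. I would invoke Frobenius splitting compatible with Schubert varieties, which gives that the ideal of $S_\sigma$ in the homogeneous coordinate ring of $G/B$ is generated in degree one by $D(\sigma\la)^\perp$. This is the classical statement that the extremal-weight (standard monomial) linear forms vanishing on $S_\sigma$ generate its ideal. Then $\eI_{S_\sigma}\otimes\eO(\la)$ is globally generated, which is exactly what the graph-closure description of the blow-up needs.
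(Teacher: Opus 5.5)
Your proposal is correct and is essentially the paper's argument. The paper realizes $\Bl_{S(\sigma)}F$ as the strict transform of $F$ inside $\Bl_{\bP(D(\sigma\la))}\bP(L(\la))\subset\bP(L(\la))\times\bP(L_\sigma(\la))$, using $S(\sigma)=F\cap\bP(D(\sigma\la))$ and $H^0(F,\eI_\sigma\T\eL(\la))\simeq L_\sigma(\la)^*$, and then identifies the image with the $B_-$-orbit closure by irreducibility; this is exactly your graph closure of the linear projection. You are more careful than the paper on one point: you require the intersection to be scheme-theoretic, i.e.\ that $D(\sigma\la)^\perp$ globally generates $\eI_{S_\sigma}(\la)$, citing Ramanathan, whereas the paper's lemma leaves this implicit.
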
 
 
As in the case of Grassmann varieties, it is natural to consider the mixed blow-ups.
More precisely, one starts with a collection of the Weyl group elements
$\underline{\sigma}=(\sigma_1,\dots,\sigma_m)$ and define $F_{\underline{\sigma}}$
inside $\prod_{i=1}^m X_{\sigma_i}(\la)$ as the closure of the $B_-$ orbit through the
product of highest weight lines $[v_{\sigma_i}(\la)]$. It is tempting to conjecture           
that if $\underline{\sigma}=W$ (i.e. $m=|W|$), then  $F_{\underline{\sigma}}$ is smooth.

Our paper is organized in the following way. In Section \ref{sec:setup} we introduce
the main objects of study and collect the preliminary material.
In Section \ref{sec:rblow-up} we describe the blow-ups along the Schubert varieties
$S_r$ as orbit closures inside the products of classical and truncated Grassmannians.
In Section \ref{sec:resol} we construct a resolution for the ideal sheaf of $S_r$ and derive the general
description of the blow-ups in this particular case. 
In Section \ref{sec:coll} we introduce the mixed blow-ups and describe the fibers 
of the projections to the Grassmannians in terms of partial collineations.
In Section \ref{sec:general} we provide the general setup describing the blow-ups
along Schubert varieties in terms of the truncated flag varieties.
Appendix \ref{sec:app} contains technical computations with the ideal sheaves.

\subsection*{Conventions}
Throughout the paper we work over the field of complex numbers. We fix two positive 
integers $d<n$ and always assume that $d\le n-d$. 
For a vector $u$ in a vector space $U$ we denote by $[u]\in\bP(U)$ the line spanned
by $u$.  
All the Demazure modules and Schubert
varieties we consider are the opposite Demazure modules and opposite Schubert varieties. 
Hence, the Demazure modules are generated from the extremal weight vectors by the 
action of the Chevalley generators corresponding to the negative roots and the Schubert
varieties are invariant with respect to the negative Borel subgroup.  

\section{The setup}\label{sec:setup}
\subsection{Lie algebras and groups}
Throughout the text we use the notation $[k]$ for 
the set $\{1,\dots,k\}$, $k\in\bZ_{>0}$. We fix two numbers $d,n\in\bZ_{\ge 1}$ such 
that $1\le d<n$. We assume that $d\le n-d$ to ensure that $\min(d,n-d)=d$
(this assumption is not conceptual, but makes the notation simpler).

Let $V$ be an $n$-dimensional vector space with a fixed basis $v_1,\dots,v_n$.
Let $\msl_n=\msl(V)$ be the Lie algebra of complex traceless matrices with the standard
Cartan decomposition $\msl_n=\fn_-\oplus\fh\oplus \fn$ into the lower triangular,
diagonal and upper triangular subalgebras. 
In particular, $\fn_-$ is spanned by matrix units $E_{i,j}$ with $1\le j<i\le n$.
The corresponding subgroups of the Lie group $SL_n$ are denoted by 
$N_-$, $H$ and $N$. Also, let $\fb=\fn\oplus\fh$ and $\fb_-=\fn_-\oplus\fh$
be the Borel subalgebras with the Lie groups $B,B_-\subset SL_n$.

For a set $J\subset [n-1]$ we denote by $\fp_J\subset\msl_n$ the corresponding
standard parabolic subalgebra containing $\fb$. In particular, if $J=[n~-~1]$, 
then $\fp_J=\fb$ and if $J=\{d\}$ is a single element, then $\fp_d$
is the maximal parabolic subalgebra corresponding to the simple root $\al_d$.
We denote by $P_J\subset SL_n$ the parabolic Lie subgroup corresponding to the Lie
algebra $\fp_J$. Similarly, one has the opposite parabolic subagebras $\fp^-_{J}\supset\fb_-$
and their Lie groups $P^-_{J}\subset SL_n$. 

For $d=1,\dots,n-1$, let $\fa_d\subset \fn_-$ be the lower triangular abelian radical of the  
maximal parabolic subalgebra $\fp^-_d$, i.e. $\msl_n=\fa_d\oplus \fp_d$.
Explicitly, the radical, $\fa_d$ is spanned by the matrix units $E_{i,j}$ with 
$i=d+1,\dots,n$ and $j=1,\dots,d$. Let $\exp(\fa_d)\subset N_-$ be the corresponding 
abelian unipotent group, $\exp(\fa_d)\simeq \bG_a^{d(n-d)}$ ($\bG_a=(\bC,+))$.

\subsection{Representations}
Let $\al_1,\dots,\al_{n-1}\subset\fh^*$ and $\omega_1,\dots,\omega_{n-1}\subset\fh^*$ 
be the simple (positive) roots and fundamental weights for $\msl(V)$. For a dominant 
integral weight
$\la\in P^+$, $\la=\sum_{i=1}^{n-1} m_i\omega_i$, $m_i\in\bZ_{\ge 0}$ we denote by
$\Sigma_\la V$ the corresponding irreducible finite-dimensional $\msl(V)$ module with a fixed 
highest weight vector $v(\la)$.
In particular, $\Sigma_{\omega_1}V$ is the vector representation $V$ and 
$\Sigma_{\omega_d}V\simeq \Lambda^d V$.

One can also use the $\mgl(V)\simeq \mgl_n$ terminology. Namely, the irreducible finite-dimensional 
$\mgl_n$ modules $\Sigma_\mu V$ are labeled by partitions $\mu=(\mu_1\ge\dots\ge\mu_n\ge 0)$. 
The restriction of an irreducible $\mgl_n$ module to the subalgebra $\msl_n\subset\mgl_n$
is still irreducible and is given by
\[
\Sigma_\mu V|_{\msl_n} \simeq \Sigma_{(\mu_1-\mu_2)\om_1 + \dots + (\mu_{n-1}-\mu_n)\om_{n-1}} V.
\]
In particular, the restriction of  $\Sigma_{1^d0^{n-d}}V$
to $\msl(V)$ is the fundamental representation $\Sigma_{\om_d} V$ (here and below  
$a^b=(a,\dots,a)$, where $a$ shows up $b$ times).

\begin{rem}
In what follows we use simultaneously representations of algebras $\msl(V)$ and $\mgl(V)$
for different spaces $V$; hence it is convenient for us to specify the underlying space $V$
explicitly while working with the Lie algebras and their highest weight representations. 
\end{rem}

\begin{rem}
 The $\mgl(V)$ modules $\Sigma_\mu V$ for $\mu_1=\dots =\mu_n$ are one-dimensional and restrict
 to the trivial $\msl(V)$ module $\Sigma_0 V$.
\end{rem}

Let $v_1,\dots,v_n$ be the standard basis of $V=\Sigma_{\om_1} V$.
For $1\le d<n$ we fix the decomposition
\[
V=V_d\oplus V_{n-d},\ V_d = \mathrm{span}\{v_1,\dots,v_d\},
V_{n-d} = \mathrm{span}\{v_{d+1},\dots,v_n\}.
\]
By definition, $\fa_d V_d \subset V_{n-d}$ and both $V_d$ and $V_{n-d}$ are 
preserved by the Levi subalgebra $\fl_d=\msl_d\oplus\msl_{n-d}\subset \fp_d$.

The space $\Sigma_{\omega_d}(V)=\Lambda^d V$ is a cyclic representation of $\fa_d$ with the cyclic 
vector being the highest weight vector $v(\omega_d)=v_1\wedge\dots\wedge v_d$.
For a set $I\subset[n]$, $I=\{i_1<\dots<i_d\}$ we denote by $v_I$ the 
wedge product $v_{i_1}\wedge\dots\wedge v_{i_d}\in \Lambda^dV$. In particular,
$v(\omega_d)=v_{[d]}$. We introduce the degree of $I$ (and of the vector $v_I$)
by 
\[
\deg I = |I_{>d}| = \#\{i\in I:\ i>d\}.
\] 
The degree ranges from $0$ (for $I=[d]$) to $d$ (recall the assumption $d\le n-d$).

\begin{lem}
The wedge power $\Lambda^d V$ is a cyclic module of the parabolic subalgebra 
$\fp^-_{d}$. The Levi subalgebra $\fl_d$ acts by the degree preserving operators
and $\fa_d$ increases the degree by one.
\end{lem}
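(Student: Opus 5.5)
The plan is to verify the three claims directly on the basis $\{v_I\}$, starting from the explicit matrix-unit descriptions of $\fl_d$ and $\fa_d$ given above. First I compute the action of a matrix unit on a wedge monomial. For $E_{i,j}$ with $i\neq j$ we have $E_{i,j}v_k=\delta_{jk}v_i$, so
\[
E_{i,j}v_I=\pm v_{(I\setminus\{j\})\cup\{i\}} \quad\text{if } j\in I,\ i\notin I,
\]
and $E_{i,j}v_I=0$ otherwise. Diagonal elements of $\fh$ act on $v_I$ by scalars.

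The degree statements then follow by a case check. The Levi subalgebra $\fl_d$ is spanned by $\fh$ together with the $E_{i,j}$ having $i,j\le d$ or $i,j>d$. Replacing $j$ by $i$ on the same side of $d$ does not change $|I_{>d}|$, so $\fl_d$ preserves degree. The radical $\fa_d$ is spanned by the $E_{i,j}$ with $i>d\ge j$, and replacing such a $j\in I$ by $i\notin I$ raises $|I_{>d}|$ by exactly one.

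For cyclicity I show that $\U(\fa_d)v_{[d]}$ already contains every $v_I$. Fix $I$ with $\deg I=k$. Write $[d]\setminus I=\{j_1,\dots,j_k\}$ and $I_{>d}=\{i_1,\dots,i_k\}$; these two sets have the same size because $|I|=d$. Then
\[
E_{i_1,j_1}\cdots E_{i_k,j_k}\,v_{[d]}=\pm v_I .
\]
At each step the index being removed, $j_t$, is still present, and the index being inserted, $i_t$, is not yet present, so the product is never zero. Since the $v_I$ span $\Lambda^dV$, the space is cyclic already for $\U(\fa_d)$, hence for $\fp^-_d\supset\fa_d$.

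Nothing here is a genuine obstacle. The only point needing care is the bookkeeping in the last step: the removal and insertion indices must be matched up with no repetition, so that no factor kills the vector. The signs are irrelevant for spanning.
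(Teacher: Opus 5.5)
Your proof is correct and follows the same route as the paper, which simply notes that $\U(\fa_d)$ generates all of $\Lambda^d V$ from $v_{[d]}$ and calls the degree statements obvious. You have written out the matrix-unit bookkeeping that the paper leaves implicit.
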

\begin{proof}
The module $\Lambda^d V$ is cyclic since $\fa_d$ generates the whole space from 
the highest weight vector $v(\omega_d)=v_{[d]}$. The second claim is obvious.  
\end{proof}

An opposite Demazure module inside an irreducible $\msl(V)$ module is a subspace generated
from an extremal weight vector by the action of the Borel subalgebra. For the wedge 
powers $\Lambda^d V$ the extremal vectors are exactly vectors $v_I$, $I\in\binom{[n]}{d} $.

\subsection{Cyclic quotients}\label{subsec:cycquot}
For an integer $r\ge 0$ we 
introduce a family $(\Lambda^d V)_r$ of cyclic $\fa_d$ quotients of $\Lambda^d V$
as follows 
\[
(\Lambda^d V)_r = \Lambda^d V/\mathrm{span}(v_I,\ \deg I > r).
\]  
In particular, $\dim (\Lambda^d V)_0 =1$, $\dim (\Lambda^d V)_1 =1+d(n-d)$,
$(\Lambda^d V)_d =\Lambda^d V$. 
We denote by $w_r\in (\Lambda^d V)_r$ the class of the highest weight vector 
$v(\om_d)$.
One has $(\Lambda^d V)_r = \U(\fa_d)w_r$ and the degree grading on $(\Lambda^d V)_r$
is compatible with the action of $\fa_d$ by the degree one operators.

\begin{lem}
The space $\mathrm{span}(v_I,\ \deg v_I>r)$ is $\fp^-_{d}$-invariant.
One has the isomorphism of $\fl_d = \msl_d\oplus\msl_{n-d}$ modules
\begin{equation}\label{eq:Hom}
(\Lambda^d V)_r \simeq \bigoplus_{i=0}^{r} \Lambda^{d-i} V_d\T \Lambda^i V_{n-d}
\simeq \bigoplus_{i=0}^{r} \Hom(\Lambda^i V_d,\Lambda^i V_{n-d}).
\end{equation}
\end{lem}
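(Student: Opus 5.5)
Both claims reduce to the degree grading on $\Lambda^d V$, so the plan is to read off how $\fp^-_d=\fl_d\oplus\fa_d$ acts on degrees and then decompose each graded piece.

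\emph{Invariance.} Let $W_{>r}=\mathrm{span}(v_I,\ \deg I>r)$; it is the sum of the degree components $(\Lambda^dV)^{(i)}$ with $i>r$. By the preceding lemma, $\fl_d$ acts by degree-preserving operators, so it maps each of these components into itself. Also $\fa_d$ raises degree by one, so it maps degree $i>r$ into degree $i+1>r$; the image is zero once $i+1>d$. Since $\fp^-_d=\fl_d\oplus\fa_d$ as vector spaces, $W_{>r}$ is $\fp^-_d$-invariant. If one wants the $\mgl$-version, the Cartan part of $\fp^-_d$ is diagonal in the basis $v_I$ and preserves degree, so nothing changes.

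\emph{First isomorphism.} Restricting to $\fl_d$, the complement $\mathrm{span}(v_I,\ \deg I\le r)$ is $\fl_d$-stable and maps isomorphically onto the quotient $(\Lambda^dV)_r$. So it suffices to decompose it. From $V=V_d\oplus V_{n-d}$ we get the standard $\mathfrak{gl}(V_d)\oplus\mathfrak{gl}(V_{n-d})$-equivariant decomposition
\[
\Lambda^dV\simeq\bigoplus_{i}\Lambda^{d-i}V_d\T\Lambda^iV_{n-d},
\]
sending $v_{I_{\le d}}\T v_{I_{>d}}\mapsto \pm v_I$. The summand indexed by $i$ is exactly the span of the $v_I$ with $\deg I=i$. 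Taking $0\le i\le r$ gives the first isomorphism.

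\emph{Second isomorphism.} Let $D=\dim V_d=d$. The wedge pairing $\Lambda^{d-i}V_d\T\Lambda^iV_d\to\Lambda^dV_d$ is perfect, so $\Lambda^{d-i}V_d\simeq(\Lambda^iV_d)^*\T\Lambda^dV_d$. The top power $\Lambda^dV_d$ is one-dimensional and is a trivial $\msl_d$-module; as an $\mgl_d$-module it is a character, which is why the statement is made for $\fl_d=\msl_d\oplus\msl_{n-d}$. Hence
\[
\Lambda^{d-i}V_d\T\Lambda^iV_{n-d}\simeq(\Lambda^iV_d)^*\T\Lambda^iV_{n-d}=\Hom(\Lambda^iV_d,\Lambda^iV_{n-d}).
\]

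No step is a genuine obstacle. The only point needing care is that the quotient is identified with the complement only as an $\fl_d$-module, not as an $\fa_d$-module. The second isomorphism also holds only up to the determinant character, so it is $\msl$- but not $\mgl$-equivariant.
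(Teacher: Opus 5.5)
Your proposal is correct and follows essentially the same route as the paper: invariance comes from $\fa_d V_d\subset V_{n-d}$ (so $\fa_d$ raises degree while the Levi part preserves it), the first isomorphism from writing each surviving monomial as $v_J\wedge v_L$ with $J\subset[d]$, $L\subset\{d+1,\dots,n\}$, and the second from $\Lambda^{d-i}V_d\simeq(\Lambda^iV_d)^*$ as $\msl_d$-modules. Your extra remarks on the Cartan/central part of $\fp^-_d$ and on the determinant twist are correct refinements that the paper leaves implicit; note also that the sign in $v_{I_{\le d}}\otimes v_{I_{>d}}\mapsto\pm v_I$ is always $+$.
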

\begin{proof}
The first claim follows from $\fa_d V_d\subset V_{n-d}$. To prove the second claim we note
that $(\Lambda^d V)_r$ admits a basis of the form $v_J\wedge v_L$, where 
\[
J=(1\le j_1\le\dots\le j_{d-i}\le d),\ L=(d+1\le l_1<\dots<l_i\le n)
\] 
for $i=0,\dots,d$, which implies the first isomorphism in \eqref{eq:Hom}. To prove the 
second isomorphism it suffices to note that $\Lambda^{d-i} V_d\simeq (\Lambda^i V_d)^*$.
\end{proof}

\begin{cor}
One has 	
\[
\dim (\Lambda^d V)_r = \sum_{i=0}^{r} \binom{d}{i}\binom{n-d}{i}.
\]
\end{cor}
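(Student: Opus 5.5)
The plan is to read the dimension directly off the decomposition \eqref{eq:Hom} from the preceding lemma. The isomorphism
\[
(\Lambda^d V)_r \simeq \bigoplus_{i=0}^{r} \Lambda^{d-i} V_d\T \Lambda^i V_{n-d}
\]
holds in particular as vector spaces, so it suffices to compute the dimension of each summand and add them up. No further structure beyond the lemma is needed.

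For the $i$-th summand we use $\dim V_d = d$ and $\dim V_{n-d}=n-d$. These give
\[
\dim \Lambda^{d-i}V_d = \binom{d}{d-i}=\binom{d}{i}, \qquad \dim\Lambda^i V_{n-d}=\binom{n-d}{i}.
\]
The dimension of a tensor product is the product of the dimensions, so the $i$-th summand has dimension $\binom{d}{i}\binom{n-d}{i}$. Summing over $i=0,\dots,r$ gives the claimed formula.

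As an independent check, we can count the basis of $(\Lambda^d V)_r$ directly. It consists of the classes of $v_I$ with $\deg I\le r$. A set $I$ with $\deg I=i$ is determined by choosing its $d-i$ elements in $[d]$ and its $i$ elements in $\{d+1,\dots,n\}$, which gives $\binom{d}{d-i}\binom{n-d}{i}$ choices, the same count as above.

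The only point needing care is that for $i>d$ or $i>n-d$ the binomial coefficients vanish. The formula therefore stays valid for any $r\ge 0$, and in particular it agrees with $\dim\Lambda^dV=\binom nd$ at $r=d$ by the Vandermonde identity. There is no genuine obstacle here.
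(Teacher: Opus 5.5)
Your proof is correct and matches the paper: the paper states this corollary as an immediate consequence of the decomposition \eqref{eq:Hom}, which it proves by the same count of basis vectors $v_J\wedge v_L$ of degree at most $r$ that you give as your check. Nothing further is needed.
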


\begin{rem}
The truncated spaces $(\Lambda^d V)_r $ are naturally $\fp^-_{d}$ modules, but they also admit 
the action of the larger algebra $\msl^{(d)}_n$, which is a degeneration of the classical $\msl_n$
(see \cite{Fe23,BR24,PY13}). 	
The Lie algebra $\msl^{(d)}_n\simeq \fp_d\oplus\fa_d$ is isomorphic to $\msl_n$ as a vector space, 
$\fp_d$ is a subalgebras, $\fa_d$ is an 
abelian ideal and the action of $\fp_d$ on $\fa_d$ comes from the 
quotient realization $\fa_d\simeq \msl_n/\fp_d$. The existence of this larger algebra of symmetries 
is a powerful tool in many problems (see e.g. \cite{FFL11,Fe25}), but in this paper we never
use it.
\end{rem}

For $0\le r\le d$ let 
\begin{equation}\label{eq:I(r)}
	I(r)=(1,\dots,d-r,d+1,\dots,d+r);
\end{equation}
in particular, $I(0)=[d]$, $I(d)=\{d+1,\dots,2d\}$ (recall the assumption $2d\le n$).

\begin{lem}\label{lem:Demmod}
The space $\mathrm{span}(v_I,\ \deg I>r)\subset \Lambda^d V$ is an 
opposite Demazure module  $\U(\fn_-) v_{I(r+1)}$.
\end{lem}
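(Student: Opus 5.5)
We prove the two inclusions $\U(\fn_-)v_{I(r+1)}\subset \mathrm{span}(v_I,\ \deg I>r)$ and the reverse one. Throughout we use that $\fn_-$ is spanned by $E_{i,j}$ with $i>j$, and that $E_{i,j}v_I$ is either zero or $\pm v_{I'}$ with $I'=(I\setminus\{j\})\cup\{i\}$. The latter happens exactly when $j\in I$ and $i\notin I$.

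\emph{First inclusion.} By the previous lemma, $\mathrm{span}(v_I,\ \deg I>r)$ is $\fp^-_d$-invariant, hence $\fn_-$-invariant since $\fn_-\subset\fb_-\subset\fp^-_d$. It contains $v_{I(r+1)}$, because $\deg I(r+1)=r+1$. So it contains the whole cyclic $\U(\fn_-)$-module generated by $v_{I(r+1)}$. This direction needs nothing more.

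\emph{Reverse inclusion.} It suffices to show that every $v_I$ with $\deg I=k\ge r+1$ is obtained from $v_{I(r+1)}$ by a sequence of lowering operators $E_{i,j}$, $i>j$, each acting nontrivially. Such a sequence produces $\pm v_I$. Write $I=\{i_1<\dots<i_d\}$ and $I(r+1)=\{a_1<\dots<a_d\}$. The key combinatorial claim is that $a_t\le i_t$ for all $t$, i.e. $I(r+1)\le I$ in the Bruhat (componentwise) order on $d$-subsets.

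To check the claim, note that $I$ has $d-k\le d-r-1$ elements in $[d]$, so $i_t\ge d+1$ for $t>d-k$. Hence $i_t\ge d+(t-(d-k))\ge d+t-(d-r-1)$ for those $t$. Also $i_t\ge t$ for every $t$. On the other side, $a_t=t$ for $t\le d-r-1$ and $a_t=d+t-(d-r-1)$ for larger $t$. Comparing the two in the ranges $t\le d-r-1$ and $t>d-r-1$ gives $a_t\le i_t$ for all $t$.

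Given the claim, we pass from $I(r+1)$ to $I$ by raising entries one at a time, starting with the largest index. At each step we apply $E_{i_t,a_t}$ for the largest $t$ with $a_t<i_t$. The target $i_t$ is not in the current set: positions $>t$ already equal $i_{t+1},\dots$, which exceed $i_t$, and positions $<t$ are $<a_t<i_t$. So the operator acts nontrivially, and ordering is preserved. Induction on $\sum_t(i_t-a_t)$ then finishes the argument.

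The only step needing care is this nontriviality check: that the target index is not already present, which is exactly the sortedness argument above. Everything else is routine. Finally, $v_{I(r+1)}$ is an extremal weight vector, so this span is indeed an opposite Demazure module in the paper's sense.
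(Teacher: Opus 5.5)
Your proof is correct and follows the same route as the paper: you show that every $v_I$ with $\deg I>r$ is reached from $v_{I(r+1)}$ by nontrivially acting lowering operators $E_{i,j}$, $i>j$. The paper merely asserts this as ``easily seen,'' whereas you supply the componentwise comparison $I(r+1)\le I$, the nontriviality check at each step, and the opposite inclusion via $\fp^-_d$-invariance, which the paper leaves implicit.
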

\begin{proof}
One easily sees that if $\deg v_I>r$, then $v_I$ can be obtained from the pure wedge
$v_{I(r+1)}$ by applying several matrix units of the 
form $E_{i,j}$, $i>j$.
\end{proof}

\begin{rem}
Any $\fp^-_d$ invariant Demazure submodule of $\Lambda^dV$ is equal to the span
of vectors $v_I$, $\deg v_I>r$ for some $r$. In fact, let us fix a collection 
$J\in\binom{[n]}{d}$. Then $\U(\fl_d)v_J$ contains the vector  
$v_{I(r+1)}$, where $r+1=\#\{j\in J:\ j>d\}$. 
\end{rem}

\subsection{Grassmannians and Schubert varieties}
Let $\Gr_d(V)$ be the Grassmannian of $d$-dimensional subspaces of $V$. 
The Pl\"ucker embedding realizes $\Gr_d(V)$ as a projective algebraic subvariety
of $\bP(\Lambda^dV)$ of dimension $d(n-d)$.  
The Grassmannians admit the transitive action of $SL_n$ and $\Gr_d(V)\simeq SL_n/P_d$. 
The action of the abelian unipotent subgroup $\exp(\fa_d)$ is not transitive, but 
admits an open dense orbit through the highest weight line containing $v_{[d]}$;
this orbit is an affine cell $\bA^{d(n-d)}$. 

More generally, for a collection 
$I\in \binom{[n]}{d}$ the open (opposite) Schubert variety $S^\circ_I$ is defined as the
$B_-$ orbit through the line $[v_I]\in\bP(\Lambda^dV)$. This orbit is an affine cell and
its closure is the (opposite) closed Schubert variety. Hence, the Grassmannian is the disjoint
union of the affine cells -- the open Schubert varieties. We denote by $S_I$ the closure 
of $S^\circ_I$ and by $p_I\in S_I\subset \Gr_d(V)$ the point corresponding to $[v_I]$. Explicitly,
$p_I$ is spanned by the vectors $v_i$ with $i\in I$. 

In what follows we use the special family $S_r=S_{I(r)}$, $r=0,\dots, d$ 
of opposite Schubert varieties.  

\begin{lem}\label{lem:dimint}
A $d$-dimensional subspace $U\subset V$ belongs to $S_r$ if and only if 
$\dim U\cap V_{n-d}\ge r$.	
\end{lem}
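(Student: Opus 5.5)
We prove both inclusions by comparing the $B_-$-orbit closure $S_r=\overline{B_-\,p_{I(r)}}$ with the set
\[
Z_r=\{U\in\Gr_d(V):\ \dim U\cap V_{n-d}\ge r\}.
\]

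First, $Z_r$ is closed. The condition $\dim U\cap V_{n-d}\ge r$ says that the composite $U\to V\to V/V_{n-d}\simeq V_d$ has rank at most $d-r$. That is a determinantal condition, so $Z_r$ is closed in $\Gr_d(V)$.

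Second, $Z_r$ is $B_-$-stable. The subspace $V_{n-d}=\mathrm{span}\{v_{d+1},\dots,v_n\}$ is spanned by the last basis vectors, so it is preserved by lower triangular matrices. Hence for $g\in B_-$ we have $\dim gU\cap V_{n-d}=\dim g(U\cap V_{n-d})=\dim U\cap V_{n-d}$. The point $p_{I(r)}=\mathrm{span}\{v_1,\dots,v_{d-r},v_{d+1},\dots,v_{d+r}\}$ meets $V_{n-d}$ in exactly $\mathrm{span}\{v_{d+1},\dots,v_{d+r}\}$, which has dimension $r$. So $p_{I(r)}\in Z_r$, and closedness plus $B_-$-stability give $S_r\subset Z_r$.

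For the reverse inclusion, recall that $\Gr_d(V)$ is the disjoint union of the $B_-$-orbits $S^\circ_J$. So it suffices to show that every $J$ with $p_J\in Z_r$, i.e. $\deg J\ge r$, satisfies $S^\circ_J\subset S_r$. Equivalently, $p_J\in\overline{B_-p_{I(r)}}$.

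This follows from Lemma~\ref{lem:Demmod} and the preceding Remark. Let $k=\deg J\ge r$. Using the Levi factor $GL(V_d)\times GL(V_{n-d})$, we can move $p_J$ to $p_{I(k)}$. However, the Levi factor is not contained in $B_-$, so this move has to be handled with care. The cleanest route is the standard description of opposite Schubert varieties via the Bruhat order: $S_I=\bigcup_{J\ge I}S^\circ_J$, where $J\ge I$ means $j_t\ge i_t$ for all $t$ after sorting.

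So the remaining point is a combinatorial check: $\deg J\ge r$ if and only if $J\ge I(r)$ componentwise.
\begin{itemize}
\item If $J\ge I(r)$, then the last $r$ entries of $J$ are at least $d+1,\dots,d+r$, hence exceed $d$, so $\deg J\ge r$.
\item Conversely, suppose $\deg J\ge r$. The first $d-r$ entries of $J$ are trivially at least $1,\dots,d-r$. The last $r$ entries are all larger than $d$ and distinct, so after sorting the $t$-th of them is at least $d+t$.
\end{itemize}
Hence $p_J\in S_r$ exactly when $\deg J\ge r$, and therefore $Z_r=S_r$.

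The main obstacle is justifying the Bruhat-order description of the closure. If we prefer to avoid citing it, we argue directly in Plücker coordinates using Lemma~\ref{lem:Demmod}. The span of the $v_I$ with $\deg I\ge r$ is the Demazure module $\U(\fn_-)v_{I(r)}$. The Schubert variety $S_r$ is $\Gr_d(V)\cap\bP(\U(\fn_-)v_{I(r)})$, a standard fact for opposite Schubert varieties. So $U\in S_r$ if and only if all Plücker coordinates of $U$ of degree less than $r$ vanish.

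It then remains to check that vanishing of these coordinates is equivalent to $\dim U\cap V_{n-d}\ge r$. The Plücker coordinates of degree $i$ are, up to sign, the $(d-i)$-minors of the map $U\to V_d$ (projection along $V_{n-d}$). Their vanishing for all $i<r$ says exactly that this map has rank at most $d-r$, i.e. that $\dim U\cap V_{n-d}\ge r$.
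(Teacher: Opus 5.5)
Your main argument is correct and is essentially the paper's proof. As in the paper, $B_-$ preserves $V_{n-d}$ and $\dim p_{I(r)}\cap V_{n-d}=r$, which gives $S_r\subset Z_r$; you also supply the closedness of $Z_r$, which the paper leaves implicit. The reverse inclusion is the same reduction to cells $S^\circ_J$ with $\deg J\ge r$, followed by the Bruhat-order comparison with $I(r)$. Your orientation $J\ge I(r)$ is the right one for $S_I=\overline{B_-p_I}$.

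One caveat concerns only the optional Plücker-coordinate route. A degree-$i$ Plücker coordinate is not a $(d-i)$-minor of $U\to V_d$. By Laplace expansion along the rows in $[d]$, it is a sum of products of such minors with $i$-minors of $U\to V_{n-d}$. This by itself only gives the implication ``$\mathrm{rk}(U\to V_d)\le d-r$ implies vanishing of all coordinates of degree $<r$''. For the converse, take a basis of $U$ whose last $k=\dim U\cap V_{n-d}$ vectors span $U\cap V_{n-d}$. The coordinate matrix is then block lower triangular. So for a suitable $I$ of degree $k$, $\Delta_I$ is a product of two nonzero maximal minors, which forces $k\ge r$.
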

\begin{proof}
For the torus fixed point $p_{I(r)}$ one has $\dim p_{I(r)}\cap V_{n-d}= r$. Since
$B_-$ preserves $V_{n-d}$, we conclude that $\dim U\cap V_{n-d}\ge r$ for any $U\in S_r$.

Now assume $\dim U\cap V_{n-d}\ge r$. Then $U$ belongs to a Schubert cell $S_J^\circ$ 
for some $J$ subject to the condition $|\{j\in J:\ j>d\}|\ge r$. Hence $I(r)\ge J$
(componentwise, being ordered from smaller to larger entries) and $S_{I(r)}\supset S_J$.	
\end{proof}

One has the following list of basic properties of the Schubert varieties $S_r$.
\begin{lem}
The varieties $S_r$, $0\le r\le d$ satisfy the following properties:	
\begin{enumerate}
\item $S_0\supset S_1\supset \dots \supset S_d$,
\item $S_0=\Gr_d(V)$, $S_d =\Gr_d(V_{n-d})$,
\item $S_1$ is the divisor in $\Gr_d(V)$, $\Gr_d(V)\setminus S_1 = \exp(\fa_d)[v_{[d]}]$,
\item $\mathrm{codim} S_r = r^2$.
\end{enumerate}
\end{lem}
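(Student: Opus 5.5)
The plan is to take the four properties one at a time, using the description $S_r=\{U:\ \dim U\cap V_{n-d}\ge r\}$ from Lemma \ref{lem:dimint} throughout.

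\emph{Items (1) and (2).} Item (1) follows at once from Lemma \ref{lem:dimint}, since the condition $\dim U\cap V_{n-d}\ge r$ becomes stronger as $r$ grows. For item (2), take $r=0$: the condition is empty, so $S_0=\Gr_d(V)$. Take $r=d$: the condition says $U\subset V_{n-d}$, so $S_d=\Gr_d(V_{n-d})$. This is nonempty because $d\le n-d$.

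\emph{Item (4).} I would compute the dimension of $S_r$ through an incidence correspondence. Consider pairs $(W,U)$ with $W\in\Gr_r(V_{n-d})$ and $W\subset U\in\Gr_d(V)$. The choice of $W$ contributes $r(n-d-r)$ dimensions. Given $W$, the subspace $U$ corresponds to a point of $\Gr_{d-r}(V/W)$, which contributes $(d-r)(n-d)$ dimensions. The projection of the incidence variety to $S_r$ is surjective, and it is birational: on the open locus where $\dim U\cap V_{n-d}=r$ exactly, the subspace $W=U\cap V_{n-d}$ is uniquely determined. That locus is nonempty because $d-r\le d$ leaves room in $V_d$ for the complementary part of $U$. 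Hence
\[
\dim S_r=r(n-d-r)+(d-r)(n-d)=d(n-d)-r^2,
\]
so $\mathrm{codim}\, S_r=r^2$.

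As a cross-check, I would also compute the dimension of the cell $S^\circ_{I(r)}$ combinatorially. The opposite cell of $I$ has codimension $\sum_k (i_k-k)$. For $I(r)$ the first $d-r$ entries contribute $0$, and the last $r$ entries contribute $r\cdot(d-(d-r))=r^2$.

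\emph{Item (3).} The case $r=1$ of item (4) shows that $S_1$ has codimension $1$, so it is a divisor. It remains to show that the complement of $S_1$ is the open orbit. A point $U$ lies outside $S_1$ exactly when $U\cap V_{n-d}=0$. Equivalently, the projection $U\to V_d$ along $V_{n-d}$ is an isomorphism, so $U$ is the graph of a unique linear map $A\in\Hom(V_d,V_{n-d})\simeq\fa_d$. Since $\fa_d$ is abelian with $\fa_d^2=0$ acting on $V$, we have $\exp(A)V_d=\{x+Ax:\ x\in V_d\}$, which is exactly this graph. This gives a bijection $\fa_d\to\Gr_d(V)\setminus S_1$, namely the orbit $\exp(\fa_d)[v_{[d]}]$.

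The main obstacle is the birationality step in the dimension count, namely the genericity argument for the incidence variety. The cell-codimension formula offers an independent check of the result.
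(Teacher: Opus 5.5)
Your proposal is correct. The paper states this lemma without proof. The only justification it gives, for item (4), is the remark that follows, which identifies $S_r$ with the partition $(n-d)^{d-r}(n-d-r)^r$ in the $d\times(n-d)$ box; the size $d(n-d)-r^2$ of that partition is $\dim S_r$. That is the same computation as your ``cross-check'' $\sum_k(i_k-k)=r^2$ for $I(r)$, written in partition language. It is already a complete proof of (4) by itself, because $S_r$ is by definition the closure of the cell $S^\circ_{I(r)}$ and taking closure does not change dimension.

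Your main argument takes a different route: it counts dimensions on the incidence variety $\{(W,U):\ W\in\Gr_r(V_{n-d}),\ W\subset U\}$. This is exactly the resolution $\widetilde S_r$ that the paper introduces later, in Section~\ref{sec:resol}. The advantage is that it avoids Bruhat-cell bookkeeping and shows $S_r$ as the birational image of a smooth irreducible variety, so irreducibility comes for free. The cost is the genericity check you flag, and you settle it correctly; for instance, $p_{I(r)}$ satisfies $\dim p_{I(r)}\cap V_{n-d}=r$.

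For (3), your graph argument using $A^2=0$ and $\exp(A)=1+A$ is exactly right. There is also a quicker way to get the divisor claim without using (4). One has $U\cap V_{n-d}\neq 0$ if and only if the projection $U\to V_d$ is degenerate, which holds if and only if the Pl\"ucker coordinate $\Delta_{[d]}(U)$ vanishes. Hence $S_1$ is the hyperplane section $\{\Delta_{[d]}=0\}$ of $\Gr_d(V)$.
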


\begin{rem}
The Schubert varieties inside Grassmannian $\Gr_d(V)$ are often parametrized by partitions 
$\la=(\la_1,\dots,\la_k)$ with $n-d\ge \la_1$ and $k\le d$ (i.e. the corresponding Young
diagram fits into the $d\times (n-d)$ box). In particular, the dimension of the Schubert variety
attached to $\la$ is the sum of all $\la_i$. 
In this parametrization the Schubert variety $S_r$ corresponds to the partition 
$(n-d)^{d-r}(n-d-r)^r$. Here is a picture for $n=10, d=4$ and $r=1,2,3,4$.
\[
	\begin{tikzpicture}[scale=0.5]
		\foreach \row[count=\y] in {6,6,6,5}{
			\foreach \x in {1,...,\row}{
				\draw (\x-1,-\y+1) rectangle (\x,-\y+2);
			}
		}
	\end{tikzpicture} \qquad
	\begin{tikzpicture}[scale=0.5]
		\foreach \row[count=\y] in {6,6,4,4}{
			\foreach \x in {1,...,\row}{
				\draw (\x-1,-\y+1) rectangle (\x,-\y+2);
			}
		}
	\end{tikzpicture} \qquad
	\begin{tikzpicture}[scale=0.5]
		\foreach \row[count=\y] in {6,3,3,3}{
			\foreach \x in {1,...,\row}{
				\draw (\x-1,-\y+1) rectangle (\x,-\y+2);
			}
		}
	\end{tikzpicture} \qquad
	\begin{tikzpicture}[scale=0.5]
		\foreach \row[count=\y] in {2,2,2,2}{
			\foreach \x in {1,...,\row}{
				\draw (\x-1,-\y+1) rectangle (\x,-\y+2);
			}
		}
	\end{tikzpicture} 
\]
\end{rem}

\begin{rem}
The previous remark implies that the varieties $S_r$ are singular for $r\ne 0,d$, since
the smooth Schubert varieties correspond to rectangular partitions \cite{LS84}.	
\end{rem}

\begin{lem}
A Schubert variety in $\Gr_d(V)$ is $P^-_{d}$ invariant if and only if it coincides
with one of $S_r$. The variety $S_r$ consists of points of $\Gr_d(V)$ contained
in the projectivization of the Demazure module $\U(\fn_-)v_{I(r)}$.	
\end{lem}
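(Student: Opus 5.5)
The lemma has two parts, and I will treat them in turn.

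\emph{Invariance.} For the first claim I use Lemma \ref{lem:dimint}. Each $S_r$ is $P^-_d$ invariant: the subgroup $P^-_d$ preserves the subspace $V_{n-d}$, since $\fp^-_d=\fl_d\oplus\fa_d$ and $\fa_d V_{n-d}=0$. Hence the condition $\dim U\cap V_{n-d}\ge r$ is preserved by $P^-_d$.

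For the converse, let $S_J$ be a $P^-_d$ invariant Schubert variety. It contains the fixed point $p_J$, and so it contains $P^-_d\, p_J$. Set $k=\#\{j\in J:\ j>d\}$. The Levi subgroup $L_d$ contains the permutations of $\{1,\dots,d\}$ and of $\{d+1,\dots,n\}$. Applying such permutations (up to sign, inside $SL$), we move $p_J$ to the fixed point $p_{I(k)}$. Thus $S_J\supset S_{I(k)}=S_k$.

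On the other hand, every $U\in S_J$ satisfies $\dim U\cap V_{n-d}\ge k$. Indeed, $B_-$ preserves $V_{n-d}$ and $\dim p_J\cap V_{n-d}=k$, so the inequality holds on the orbit $B_-p_J$ and then on its closure by semicontinuity. By Lemma \ref{lem:dimint} this gives $S_J\subset S_k$, and therefore $S_J=S_k$. (This is the remark after Lemma \ref{lem:Demmod}, translated into geometry.)

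\emph{Demazure modules.} For the second claim, I first describe the Demazure module. By Lemma \ref{lem:Demmod}, applied with $r-1$ in place of $r$, we have $\U(\fn_-)v_{I(r)}=\mathrm{span}(v_I:\ \deg I\ge r)$. Call this space $D_r$.

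Take $U\in S_r$ and choose a basis of $U$ whose first $r$ vectors lie in $U\cap V_{n-d}$. The Plücker vector is the wedge product of these basis vectors. Expanding it in the basis $v_I$, every term contains at least $r$ factors from $\{v_{d+1},\dots,v_n\}$, so every term has degree $\ge r$. Hence $[U]\in\bP(D_r)$.

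Conversely, suppose the Plücker vector of $U$ lies in $D_r$, i.e. all Plücker coordinates of degree $<r$ vanish. Consider the projection $\pi:V\to V_d$ along $V_{n-d}$; its kernel restricted to $U$ is $U\cap V_{n-d}$. Suppose $\dim\pi(U)=d-m$ with $m<r$. Choose a basis $u_1,\dots,u_{d-m}$ of $U$ modulo $U\cap V_{n-d}$, completed by a basis of $U\cap V_{n-d}$. Expand the wedge product in the basis $v_I$ and look at the component lying in $\Lambda^{d-m}V_d\T\Lambda^{m}V_{n-d}$. This component equals $\pi(u_1)\wedge\dots\wedge\pi(u_{d-m})\wedge w$, where $w$ is the wedge of the basis of $U\cap V_{n-d}$. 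It is nonzero, because the $\pi(u_i)$ are linearly independent in $V_d$ and the basis of $U\cap V_{n-d}$ is linearly independent in $V_{n-d}$. Its degree is $m<r$, which is a contradiction. Therefore $\dim U\cap V_{n-d}\ge r$, and $U\in S_r$ by Lemma \ref{lem:dimint}.

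The main point requiring care is this last degree-$m$ component computation, which rests on the decomposition \eqref{eq:Hom}. Everything else is formal.
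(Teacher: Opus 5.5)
Your proposal is correct. The invariance part follows the paper's own argument: a Levi element of $P^-_d$ moves $p_J$ to $p_{I(k)}$, and you then compare the two Schubert varieties. The paper gets $S_J\subset S_k$ from the $P^-_d$-invariance of $S_k$, while you get it from the closed condition $\dim U\cap V_{n-d}\ge k$ and Lemma~\ref{lem:dimint}; this amounts to the same thing.

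For the Demazure-module description, the paper gives no argument. It only invokes the standard correspondence between Schubert varieties and Demazure modules (the Grassmannian instance of Lemma~\ref{lem:SchDem}) and remarks that it can be checked directly. Your argument supplies that direct check. You identify $\U(\fn_-)v_{I(r)}$ with $\mathrm{span}(v_I:\ \deg I\ge r)$ via Lemma~\ref{lem:Demmod}. You then show that the $\Lambda^{d-m}V_d\T\Lambda^mV_{n-d}$-component of the Pl\"ucker vector is nonzero when $m=\dim U\cap V_{n-d}$. This is elementary and self-contained, and it establishes the set-theoretic equality the statement asks for.
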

\begin{proof}
The maximal parabolic subgroup $P^-_{d}$ preserves $V_{n-d}$ and hence by Lemma \ref{lem:dimint}
all $S_r$ are $P^-_{d}$ invariant. Now let $S_I$ be a $P^-_{d}$ invariant Schubert variety.
Let $I=I_{<}\sqcup I_{>}$, where $I_< = I\cap [d]$, $I_> = \{i\in I:\ i>d\}$, and let
$r=|I_>|$. Then there exists an element $g\in P^-_{d}$ such that $gp_I=p_{I(r)}$ for 
$I(r)$ defined by \eqref{eq:I(r)}. Hence $S_I=S_{I(r)}$, because $S_r$ is $P^-_{d}$
invariant. The last claim is the standard link between a Schubert variety and the corresponding 
Demazure module (see e.g. \cite{Kum02}), but can be easily proved explicitly in this case.
\end{proof}

\subsection{Vector bundles on Grassmannians}\label{subsec:Schurf}
Let us consider the tautological vector bundle $\eU_d$ on $\Gr_d(V)$, i.e. $\eU_d$
is a rank $d$ subbundle of $V(\eO) = V\T \eO_{\Gr_d(V)}$, whose fiber at a point $U$ 
is $U$ itself. We denote by $\eU_d^\perp$ the vector bundle
$(V(\eO)/\eU_d)^*$. Let $\eO(1)$ be the line bundle on $\Gr_d(V)$, which is the positive 
generator of the
Picard group of $\Gr_d(V)$. In particular,  $\eO(1)$ is the inverse image of $\eO_{\bP(\Lambda^dV)}(1)$
with respect to the standard Pl\"ucker embedding. 

For a collection of integers $\mu=(\mu_1\ge \dots \ge \mu_k)$ and a rank $k$ vector
bundle $\eL$, let $\Sigma_\mu \eL$ be the Schur functor applied to $\eL$ (see e.g. \cite{W03}).
In particular,
\[
\Sigma_{(1,\dots,1)}\eU_d^* \simeq \Sigma_{(-1,\dots,-1)}\eU_d^\perp\simeq \eO(1). 
\] 
We note also that for any vector bundle $\eL$ and a composition $\mu$ one has 
\begin{equation}\label{eq:dualbun}
\Sigma_{(\mu_1,\dots,\mu_k)}\eL \simeq \Sigma_{(-\mu_k,\dots,-\mu_1)}\eL^*. 
\end{equation}
In what follows we denote $(-\mu_k,\dots,-\mu_1)$ by $\mu^*$.

For two collections $\mu=(\mu_1\ge \dots \ge \mu_d)\in\bZ^d$ and 
$\nu=(\nu_1\ge \dots\ge \nu_{n-d})\in\bZ^{n-d}$,
the cohomologies of the vector bundles $\Sigma_\mu \eU_d\T \Sigma_\nu \eU_d^\perp$ are computed 
via the Borel-Weil-Bott theorem as follows (see \cite{D76,Kap84,Kap88}). Let $\rho=(n,n-1,\dots,1)$ be 
the half sum of positive roots
and let us denote by $\ell(\sigma)$ the length of a permutation $\sigma$. Also, let 
$\kappa=(\mu,\nu)\in\bZ^n$. Then all the cohomologies of  $\Sigma_\mu \eU_d\T \Sigma_\nu \eU_d^\perp$
vanish unless $\kappa+\rho$ has no coinciding entries. If this condition holds, then
\begin{equation}\label{eq:BWB}
H^k(\Gr_d(V),\Sigma_\mu \eU_d\T \Sigma_\nu \eU_d^\perp) = 
\begin{cases}
\Sigma_{\sigma(\kappa+\rho)-\rho} V^*, & k=\ell(\sigma),\\
0, & \text{ otherwise},
\end{cases}
\end{equation}
where $\sigma(\kappa+\rho)$ is strictly decreasing.

\subsection{Truncated Grassmannians and their relatives}\label{sec:truncGr}
Recall the cyclic vector $w_r\in (\Lambda^dV)_r$ -- the image of the highest weight vector
$v_{[d]}\in \Lambda^dV$.  
\begin{dfn}\label{dfn:trGr}
For $r=1,\dots,d$ we define 
\[
X_r = \overline{\exp(\fa_d).[w_r]} \subset \bP((\Lambda^d V)_r).
\]	
\end{dfn}

So, $X_r$ is the closure of the orbit of the abelian unipotent group $\bG_a^{d(n-d)}$.  

\begin{lem}
All varieties $X_r$  are birationally isomorphic and admit the action of the parabolic
group $P_d^-$. The boundary members of the family are
$X_1=\bP^{d(n-d)}$ and $X_d=\Gr_d(V)$.
\end{lem}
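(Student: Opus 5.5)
Three things need checking: birationality of the $X_r$, the $P_d^-$-action, and the identification of the two extreme members.

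\emph{Birationality.} For $0\le s\le r$ the quotient map $(\Lambda^dV)_r\to(\Lambda^dV)_s$ is $\fa_d$-equivariant and sends $w_r$ to $w_s$. It induces a rational projection $\pi_{r,s}\colon\bP((\Lambda^dV)_r)\dashrightarrow\bP((\Lambda^dV)_s)$. On the orbit $\exp(\fa_d)[w_r]$ this map is defined, since the degree zero component of $\exp(A)w_r$ equals $w_r\neq 0$. It sends $\exp(A)[w_r]$ to $\exp(A)[w_s]$.

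It therefore suffices to show that for every $r\ge1$ the orbit map $\fa_d\to X_r$, $A\mapsto\exp(A)[w_r]$, is injective; in fact it is an isomorphism onto the orbit. Write $A=\sum a_{ij}E_{ij}$ with $i>d\ge j$. The degree one component of $\exp(A)w_r$ is $Aw_r$, and it equals $\sum_{i,j}\pm a_{ij}\,v_{([d]\setminus\{j\})\cup\{i\}}$. Normalise so that the $v_{[d]}$-coordinate is $1$. The degree one coordinates then recover all entries $a_{ij}$. Hence the stabiliser of $[w_r]$ in $\exp(\fa_d)$ is trivial, and the orbit is an open affine cell $\bA^{d(n-d)}$ in each $X_r$. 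The projections $\pi_{r,s}$ restrict to isomorphisms between these open orbits, so all $X_r$ are birational to each other.

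\emph{Action of $P_d^-$.} By the earlier lemma, the span of the $v_I$ with $\deg I>r$ is $\fp_d^-$-invariant. Hence $P_d^-$ acts on $(\Lambda^dV)_r$ and on its projectivisation. Write $P_d^-=L_d\ltimes\exp(\fa_d)$. The Levi factor $L_d$ fixes the line $[w_r]$, because $v_{[d]}$ is an $\fl_d$-eigenvector. It normalises $\exp(\fa_d)$. Therefore $g\exp(A)[w_r]=\exp(\mathrm{Ad}_g A)\,g[w_r]=\exp(\mathrm{Ad}_gA)[w_r]$, so the orbit is $P_d^-$-stable. Its closure $X_r$ is then $P_d^-$-stable as well.

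\emph{Boundary cases.} For $r=d$ we have $(\Lambda^dV)_d=\Lambda^dV$. The closure of $\exp(\fa_d)[v_{[d]}]$ is $\Gr_d(V)$, since this orbit is the open cell of the Grassmannian (the complement of the divisor $S_1$).

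For $r=1$ we have $(\Lambda^dV)_1=\bC w_1\oplus\fa_d w_1$, of dimension $1+d(n-d)$. Since $\fa_d$ raises degree by one, $A^2w_1=0$, so $\exp(A)[w_1]=[w_1+Aw_1]$. As $A$ ranges over $\fa_d$, these points exhaust the affine chart where the $w_1$-coordinate is nonzero, because $A\mapsto Aw_1$ is a linear isomorphism onto the degree one part. The closure of this chart is all of $\bP((\Lambda^dV)_1)=\bP^{d(n-d)}$.

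The only step requiring care is the injectivity of the orbit map, which is what makes the orbits open and the maps birational. It is handled by the explicit degree one coordinates above.
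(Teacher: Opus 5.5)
Your proof is correct and follows the same route as the paper. It uses a common open $\exp(\fa_d)$-orbit for birationality, the $\fp_d^-$-invariance of the truncation together with Levi-stability of $[w_r]$ for the $P_d^-$-action, and the direct identifications of $X_1$ and $X_d$. You also supply detail the paper only asserts: the degree-one Pl\"ucker coordinates recover $A$, so each orbit is a copy of $\bA^{d(n-d)}$, and the equivariant quotients $(\Lambda^dV)_r\to(\Lambda^dV)_s$ identify these orbits across different $r$.
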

\begin{proof}
All the varieties $X_r$ share the same open part $\exp(\fa_d).[w_r]$
isomorphic to $\bA^{d(n-d)}$. The truncated spaces $(\Lambda^d V)_r$ admit the action of
$P_d^-\supset \exp(\fa_d)$ and $[w_r]$ is invariant with respect ot the Levi part
$SL(V_d)\times SL(V_{n-d})$. Hence the $\exp(\fa_d)$ action on $X_r$ extends to the 
action of $P_d^-$. 

For $r=1$ the space $(\Lambda^dV)_1$ is spanned by $w_1$ and
the images of vectors of the form $v_I$ with $I=[d]\setminus\{j\}\cup\{i\}$ for some 
$1\le j\le d$, $d+1\le i\le n$. Hence, we identify $(\Lambda^dV)_1$ with $\bK w_1\oplus \fa_d$.
The $\fa_d$ action on $(\Lambda^dV)_1$ is trivial on the second summand and the action
on $w_1$ is induced 
by the isomorphism $\fa_d\T \bK w_1\simeq \fa_d$. Hence for $a\in\fa_d$ one has
$\exp(a)[w_1]=[w_1+a]$ and $X_1=\bP (\Lambda^dV)_1=\bP^{d(n-d)}$.
Finally, $(\Lambda^dV)_d=\Lambda^dV$ and hence $X_d=\Gr_d(V)$.
\end{proof}

\begin{rem}
Identifying $\fa_d$ with $\bA^{d(n-d)}$, one gets the maps $\bA^{d(n-d)}\to X_r$. In particular,
for $r=d$ one gets the so called  Landsberg-Manivel map $\bA^{d(n-d)}\to \Gr_d(V)$
(see \cite{LM02,FW25}).
\end{rem}

In \cite{BSS25,FSS25} the authors considered the closure 
of the graph 
of the birational map $\bP^{d(n-d)}\to \Gr_d(V)$, induced 
by the Landsberg-Manivel map. This graph closure $\G_d(V)$ can be described as 
\begin{equation}\label{eq:graphcl}
\G_d(V) = \overline{\exp(\fa_d).\left([w_1]\times [w_d]\right)}
\subset \bP^{d(n-d)}\times\Gr_d(V) = X_1\times X_d,
\end{equation}
where $[w_d]=[v_{[d]}]$ is the highest weight line in $\Gr_d(V)=X_d$. 
It is proved in \cite{Fe24} that all fibers of the natural projection $\pi:\G_d(V)\to \Gr_d(V)$
are projective spaces. By definition, the fibers are subvarieties of the projective space 
$\bP(\Lambda^d(V)_1)\simeq \bP(\bK w_1\oplus \Hom(V_d,V_{n-d}))$.  
The following is proved in \cite{Fe24}:
\begin{enumerate}
\item if $U\cap V_{n-d}= 0$, then $\pi^{-1}U$ is a single point,
\item $\pi^{-1}U\subset \bP(\Hom(V_d,V_{n-d}))$ for $U$ such that $U\cap V_{n-d}\ne 0$,
\item if $U\cap V_{n-d}\ne 0$, then 
\[
\pi^{-1}U \simeq \bP(\Hom(V_d/\mathrm{pr}(U),U\cap V_{n-d})),
\]
\end{enumerate}
where $\mathrm{pr}:V\to V_d$ is a projection along $V_{n-d}$. In particular,
if the dimension of $U\cap V_{n-d}$ is equal to $m$, then $\pi^{-1}U\simeq \bP^{m^2-1}$.

\begin{example}
Let $n=4$, $d=2$. Then the preimage of the point  $p_{\{3,4\}}$ is isomorphic to 
$\bP^3$. Outside of this point the map $\pi:\G_d(V)\to \Gr_d(V)$
is one-to-one.  
\end{example}

\begin{rem}
The varieties $X_r$ admit the following explicit Pl\"ucker type realization. Let
$z_{i,j}$ be auxiliary variables with $i\in [n]$, $j\in [d]$. For $J\in\binom{[n]}{d}$
we denote by $\Delta_J(z)$ the $d\times d$ minor of the matrix $Z=(z_{i,j})$ supported
on rows from $J$. Recall that the degree $\deg(J)$ is the cardinality of $J_{>d}$ and
let $N_r$ be the number of collections $J$ such that $\deg J\le r$.
Then $X_r$ is realized inside the projective space $\bP^{N_r-1}$ with coordinates labeled by 
$J$, $\deg J\le r$ as follows. For a collections of numbers $z_{i,j}$ we denote by 
$\Delta(z)\in \bP^{N_r-1}$ the point whose $J$-th coordinate is equal to $\Delta_J(z)$.
Then $X_r$ is the closure of  the set of points $\Delta(z)$ for all values of $z_{i,j}$
such that the $n\times d$ matrix $Z$ is of (maximal possible) rank $d$.
\end{rem}

\section{Truncated Grassmannians and blow-ups}\label{sec:rblow-up}
The goal of this section is to describe the truncated Grassmannians and blow-ups $\Bl_{S_r} \Gr_d(V)$ for the $P^-_d$
invariant Schubert varieties $S_r$.  To simplify the notation, in this section 
we denote the blow-up $\Bl_{S_r} \Gr_d(V)$ by $\Bl_r$.

\subsection{Truncated Grassmannians}
Recall the truncated wedge powers from subsection \ref{subsec:cycquot}
\[
(\Lambda^d V)_{r-1}\simeq \bigoplus_{i=0}^{r-1} \Hom(\Lambda^iV_d, \Lambda^iV_{n-d})
\]
and the truncated Grassmannians $X_{r-1}$ sitting inside $\bP((\Lambda^d V)_{r-1})$ 
(Definition \ref{dfn:trGr}).
In order to describe the geometric structure of $X_{r-1}$ we prepare the following lemma.
\begin{lem}
The birational projection map 
$\bP(\Lambda^dV)\dashrightarrow \bP((\Lambda^d V)_{r-1})$
restricted to $\Gr_d(V)$ is well defined embedding outside $S_{r-1}$. 
\end{lem}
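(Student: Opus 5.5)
The plan is to work directly with Plücker coordinates and the standard affine charts of $\Gr_d(V)$. Write $p_J$, $J\in\binom{[n]}{d}$, for the Plücker coordinates. The projection $\bP(\Lambda^dV)\dashrightarrow \bP((\Lambda^dV)_{r-1})$ keeps exactly the coordinates $p_J$ with $\deg J\le r-1$ and forgets the others. By Lemma \ref{lem:Demmod}, its indeterminacy locus is $\bP(\U(\fn_-)v_{I(r)})$. So on $\Gr_d(V)$ the map is already defined outside $S_r\subset S_{r-1}$, by the last Lemma of subsection 1.4. The remaining task is to show that it is an embedding on the open set $\Gr_d(V)\setminus S_{r-1}$.

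\emph{Step 1: a covering by charts.} Let $U\notin S_{r-1}$ and put $k=\dim U\cap V_{n-d}$. By Lemma \ref{lem:dimint}, $k\le r-2$. Choose a basis of $U$ consisting of $k$ vectors spanning $U\cap V_{n-d}$ and $d-k$ vectors whose projections to $V_d$ are linearly independent. Column reduction of the corresponding $n\times d$ matrix then produces a nonzero minor $p_J(U)$ with $J$ consisting of $d-k$ indices in $[d]$ and $k$ indices $>d$, so $\deg J=k\le r-2$. Hence
\[
\Gr_d(V)\setminus S_{r-1}\subset \bigcup_{\deg J\le r-2} \{p_J\neq 0\}.
\]
Conversely, each such chart misses $S_{r-1}$, because $p_J$ vanishes on $\bP(\U(\fn_-)v_{I(r-1)})$.

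\emph{Step 2: local inverse.} On the standard chart $\{p_J\ne 0\}$, the point $U$ is the span of the vectors
\[
x_j=\sum_{i\notin J\setminus\{j\}} \frac{p_{(J\setminus\{j\})\cup\{i\}}}{p_J}\, v_i,\qquad j\in J,
\]
and these ratios are the affine coordinates of the chart. Each index set $(J\setminus\{j\})\cup\{i\}$ has degree at most $\deg J+1\le r-1$. Therefore all chart coordinates are regular functions of the \emph{retained} coordinates on the open subset $\{p_J\ne0\}\subset\bP((\Lambda^dV)_{r-1})$. This gives a morphism from that open set back to the chart which is a left inverse of the projection. Consequently the projection restricted to each chart is a closed immersion onto its image in $\{p_J\neq 0\}$, and in particular it is injective and unramified there.

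\emph{Step 3: globalize.} Suppose $U,U'\notin S_{r-1}$ have the same image. Pick $J$ with $\deg J\le r-2$ and $p_J(U)\ne0$. Since $p_J$ is a retained coordinate, also $p_J(U')\ne0$, so $U$ and $U'$ lie in the same chart, and Step 2 gives $U=U'$. This shows injectivity, and the local left inverses show that the map is an isomorphism onto a locally closed subvariety.

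I expect the main obstacle to be the bookkeeping in Step 1: one must check that the chart forced by $U$ can be taken of degree $\le r-2$ rather than merely $\le r-1$, because this is exactly what places the degree-$(\deg J+1)$ coordinates in Step 2 inside the truncation. One could alternatively reduce to the torus-fixed points $p_{I(k)}$ using $P^-_d$-equivariance of the projection, but the direct chart argument above seems cleaner.
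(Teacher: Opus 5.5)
Your proof is correct and follows the same idea as the paper: near a point $U$ with $\dim U\cap V_{n-d}\le r-2$ one uses affine coordinates indexed by some $J$ with $\deg J\le r-2$. Each such coordinate is, up to sign and division by $p_J$, the Plücker coordinate $p_{(J\setminus\{j\})\cup\{i\}}$ of degree at most $r-1$, so it survives the truncation. The only difference is that the paper works cell by cell on the Schubert cells $S^\circ_J$, whereas you use the open charts $\{p_J\neq 0\}$ and the fact that $p_J$ itself is retained; this makes the immersion property and the injectivity across different charts explicit, where the paper leaves them implicit.
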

\begin{proof}
We note that the complement $\Gr_d(V)\setminus S_{r-1}$ is the union of Schubert cells 
(open Schubert varieties)
$S^\circ_J$ with $J\in\binom{[n]}{d}$, $\deg J\le r-2$. The affine coordinates on $S^\circ_J$ 
are $a_{i,j}$ with 
$j\in J$, $i\notin J$ and $i>j$; for $J=(j_1<\dots <j_d)$ the Pl\"ucker coordinates $\Delta_I$ of a 
point $(a_{i,j})$ are the minors of the matrix of the map sending $v_s$, $s\in [d]$ to 
$v_{j_s}+\sum_{i>j} a_{i,j}v_i$.

Now let $(a_{i,j})$ be coordinates of $U\in \Gr_d(V)\setminus S_{r-1}$. Under the projection
$\bP(\Lambda^dV)\dashrightarrow \bP(\Lambda^d V)_{r-1}$ one forgets all the Pl\"ucker coordinates 
$\Delta_I(U)$ such that $\deg I\ge r$. We need to check that no information is lost 
under this projection for $\Gr_d(V)\setminus S_{r-1}$. To this end, we take a pair 
$i>j$, $j\in J$, $i\notin J$ and 
consider the collection $J\setminus \{j\}\cup \{i\}$. Then 
\[
\Delta_{J\setminus \{j\}\cup \{i\}}(U)=\pm a_{i,j}\ \text{ and }\
\deg J\setminus \{j\}\cup \{i\}\le r-1.  
 \] 
Hence $\Gr_d(V)\setminus S_{r-1}$ embeds into $\bP((\Lambda^d V)_{r-1})$.
\end{proof}
 
\begin{cor}
The truncated Grassmannian $X_{r-1}$ contains $\Gr_d(V)\setminus S_{r-1}$.
\end{cor}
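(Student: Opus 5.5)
The plan is to combine the previous lemma with a density argument, using that the projection is equivariant. Let $\pi:\bP(\Lambda^dV)\dashrightarrow \bP((\Lambda^dV)_{r-1})$ be the linear projection along $\mathrm{span}(v_I,\ \deg I\ge r)$. This subspace is $\fp^-_d$-invariant, so $\pi$ is $P^-_d$-equivariant, and in particular $\exp(\fa_d)$-equivariant. It also sends the highest weight line $[v_{[d]}]$ to $[w_{r-1}]$, because $w_{r-1}$ is by definition the class of $v_{[d]}$. By the previous lemma, $\pi$ restricts to a morphism, indeed an embedding,
\[
\pi|:\ \Gr_d(V)\setminus S_{r-1}\to \bP((\Lambda^dV)_{r-1}).
\]
It therefore remains to show that the image of $\pi|$ lies in $X_{r-1}$.

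First, the open cell $\exp(\fa_d)[v_{[d]}]=\Gr_d(V)\setminus S_1$ lies in $\Gr_d(V)\setminus S_{r-1}$, since $S_{r-1}\subset S_1$ for $r\ge 2$. By equivariance,
\[
\pi\bigl(\exp(\fa_d)[v_{[d]}]\bigr)=\exp(\fa_d)[w_{r-1}],
\]
which is the orbit whose closure is $X_{r-1}$ by Definition \ref{dfn:trGr}.

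Second, $\Gr_d(V)\setminus S_{r-1}$ is an open subset of the irreducible variety $\Gr_d(V)$. Hence it is irreducible, and the open cell is dense in it. Since $\pi|$ is a morphism, it is continuous in the Zariski topology. Continuity gives
\[
\pi\bigl(\Gr_d(V)\setminus S_{r-1}\bigr)\subset \overline{\pi\bigl(\exp(\fa_d)[v_{[d]}]\bigr)}=\overline{\exp(\fa_d)[w_{r-1}]}=X_{r-1}.
\]
Because $\pi|$ is an embedding, $\Gr_d(V)\setminus S_{r-1}$ is thereby identified with a (locally closed) subvariety of $X_{r-1}$, which is the claim.

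I expect no serious obstacle. The only point needing care is that the previous lemma gives an honest morphism on all of $\Gr_d(V)\setminus S_{r-1}$: no Pl\"ucker coordinate of degree at most $r-1$ vanishes identically there. This is exactly the content of that lemma, since on each cell $S^\circ_J$ with $\deg J\le r-2$ the coordinate $\Delta_J$ equals $1$ and has degree $\le r-1$.
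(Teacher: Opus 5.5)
Your proof is correct and follows the paper's route: the paper reads the corollary off directly from the preceding lemma, and you make explicit the step it leaves implicit, namely that by $\exp(\fa_d)$-equivariance the dense open cell $\Gr_d(V)\setminus S_1$ maps onto $\exp(\fa_d)[w_{r-1}]$, so by continuity the whole image lies in the closure $X_{r-1}$. One wording fix: the lemma does not say that no kept coordinate vanishes identically; it says that at every point of $\Gr_d(V)\setminus S_{r-1}$ some coordinate of degree $\le r-1$ (namely $\Delta_J$) is nonzero, which is what your justification actually shows.
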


\begin{example}
Let $r=2$ (the $r=1$ case is trivial, since $S_0=\Gr_d(V)$). Then Corollary above says that
$X_1\supset \Gr_d(V)\setminus S_{1}$, which is the open Schubert cell in the Grassmannian 
(the $\exp(\fa_d)$ orbit of the highest weight line). One easily sees that $X_1$ is simply
the projective space $\bP(\Lambda^d(V)_1)\simeq \bP^{d(n-d)}$.
\end{example}

Now let us describe the complement  of $\Gr_d(V)\setminus S_{r-1}$ in $X_{r-1}$. 
Let $R_{r-1;d,n-d}$ be the algebra generated by all $(r-1)$ minors of a $d\times (n-d)$ matrix
inside a polynomial ring in variables $z_{i,j}$, $d+1\le i\le n$, $1\le j\le d$ \cite{BV88,BCV13}. 
Then $\Proj(R_{r-1;d,n-d})$ is 
isomorphic to the closure in $\bP\Hom(\Lambda^{r-1}V_d,\Lambda^{r-1}V_{n-d})$ of the
set of elements of the form $[\Lambda^{r-1}\varphi]$, $\varphi \in\Hom(V_d,V_{n-d})$.
By definition, we have the embedding $\Proj(R_{r-1;d,n-d})\subset \bP(\Lambda^dV)_{r-1}$
(since $(\Lambda^dV)_{r-1}\supset \Hom(\Lambda^{r-1}V_d,\Lambda^{r-1}V_{n-d})$).

\begin{prop}
$X_{r-1}$ is the union of $\Gr_d(V)\setminus S_{r-1}$ and $\Proj(R_{r-1;d,n-d})$.
\end{prop}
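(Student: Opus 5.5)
The plan is to prove the two inclusions separately, describing $X_{r-1}$ as the closure of the image of the open cell under the truncation. We identify the open cell $\exp(\fa_d)[v_{[d]}]$ with $\Hom(V_d,V_{n-d})$ via $\varphi\mapsto$ the graph of $\varphi$. Under the decomposition \eqref{eq:Hom}, the point $\exp(\varphi)[w_{r-1}]$ has homogeneous coordinates
\[
[1:\varphi:\Lambda^2\varphi:\dots:\Lambda^{r-1}\varphi],
\]
because its degree $i$ component is the collection of degree $i$ Pl\"ucker coordinates of the graph, i.e. the $i\times i$ minors of $\varphi$. Hence $X_{r-1}$ is the closure of the set of such points.

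\emph{Inclusion $\supseteq$.} The corollary above already gives $\Gr_d(V)\setminus S_{r-1}\subset X_{r-1}$. For $\Proj(R_{r-1;d,n-d})$, we take $\varphi$ with $\Lambda^{r-1}\varphi\neq 0$ and rescale it by $s\in\bC^*$. Then
\[
[1:s\varphi:\dots:s^{r-1}\Lambda^{r-1}\varphi]=[s^{1-r}:\dots:s^{-1}\varphi':\Lambda^{r-1}\varphi],
\]
where the intermediate entries are of the form $s^{i-r+1}\Lambda^i\varphi$. This tends to $[\Lambda^{r-1}\varphi]$ as $s\to\infty$. So every point $[\Lambda^{r-1}\varphi]$ lies in the closed set $X_{r-1}$. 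By definition, $\Proj(R_{r-1;d,n-d})$ is the closure of these points, so it is contained in $X_{r-1}$ as well.

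\emph{Inclusion $\subseteq$.} Let $y\in X_{r-1}$. Then $y$ is a limit of points $y_t=[(\Lambda^i\varphi_t)_{i\le r-1}]$ along a curve (or a sequence) $\varphi_t$. We split into two cases according to whether some coordinate of $y$ of degree $\le r-2$ is nonzero.

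\emph{Case 1: some coordinate of degree $\le r-2$ is nonzero.} Then $\Delta_J(y)\ne0$ for some $J$ with $\deg J\le r-2$. We use the standard affine chart $\{\Delta_J\neq 0\}\simeq\bA^{d(n-d)}$ of $\Gr_d(V)$. As in the proof of the lemma above, its coordinates are $\pm\Delta_{J\setminus\{j\}\cup\{i\}}/\Delta_J$, and all these indices have degree $\le \deg J+1\le r-1$. So these ratios are coordinates visible in $\bP((\Lambda^dV)_{r-1})$. Along $y_t$ they converge to finite values, hence the graphs $U_t$ of $\varphi_t$ converge in this chart to some $U\in\Gr_d(V)$ with $\Delta_J(U)\ne0$. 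In particular $U\notin S_{r-1}$, because $S_{r-1}$ lies in the span of the $v_I$ with $\deg I\ge r-1$ (Lemma \ref{lem:Demmod} and the lemma on $P^-_d$-invariant Schubert varieties). The truncation is continuous on $\Gr_d(V)\setminus S_{r-1}$, so $y=\lim \pi(U_t)=\pi(U)$, which lies in $\Gr_d(V)\setminus S_{r-1}$.

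\emph{Case 2: all coordinates of degree $\le r-2$ vanish.} Then $y\in\bP\Hom(\Lambda^{r-1}V_d,\Lambda^{r-1}V_{n-d})$. Choose scalars $c_t$ so that $c_t y_t$ converges to a nonzero representative of $y$. The top components $c_t\Lambda^{r-1}\varphi_t$ then converge to that nonzero representative, so $\Lambda^{r-1}\varphi_t\neq0$ for $t$ near the limit. Thus $[\Lambda^{r-1}\varphi_t]\to y$ in $\bP\Hom(\Lambda^{r-1}V_d,\Lambda^{r-1}V_{n-d})$. Since $\Proj(R_{r-1;d,n-d})$ is closed, $y\in \Proj(R_{r-1;d,n-d})$.

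The main point needing care is Case 1: one has to check that nonvanishing of a coordinate of degree $\le r-2$ really forces convergence inside $\Gr_d(V)$. This holds exactly because all chart coordinates of $\{\Delta_J\ne 0\}$ have degree at most $r-1$, which is the same observation used in the preceding lemma. The remaining steps are formal.
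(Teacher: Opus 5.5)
Your proof is correct. It uses the same split as the paper: points of $X_{r-1}$ with some nonzero coordinate of degree $\le r-2$, versus points of $X_{r-1}\cap\bP\Hom(\Lambda^{r-1}V_d,\Lambda^{r-1}V_{n-d})$. The key step, however, is done differently.

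The paper argues with equations. The relations satisfied by the open orbit are homogeneous both for the usual grading and for the degree grading. So each relation either involves only degree $r-1$ coordinates, or has a factor of degree $<r-1$ in every monomial. On the locus where the lower-degree coordinates vanish only the former survive, and these are exactly the relations among the $(r-1)$-minors that cut out $\Proj(R_{r-1;d,n-d})$.

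You argue with limits instead. The rescaling $\varphi\mapsto s\varphi$, $s\to\infty$, which is the geometric counterpart of the paper's degree grading, gives $\Proj(R_{r-1;d,n-d})\subset X_{r-1}$. Renormalizing a convergent sequence of orbit points gives the reverse inclusion.

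What each route buys:
\begin{itemize}
\item The paper's route is purely algebraic and identifies the intersection at the level of defining equations, not just as a set.
\item Yours is more elementary but set-theoretic, and it relies on the Euclidean topology over $\bC$ (or on curve selection), which is harmless here.
\item Your Case 1 proves something the paper only asserts in its first sentence, implicitly relying on the preceding lemma: a point of $X_{r-1}$ with a nonzero coordinate of degree $\le r-2$ lies in the image of $\Gr_d(V)\setminus S_{r-1}$. Your observation is exactly the right justification: all chart coordinates of $\{\Delta_J\ne 0\}$ have degree $\le \deg J+1\le r-1$, so the graphs $U_t$ converge inside that chart.
\end{itemize}
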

\begin{proof}
The complement of $\Gr_d(V)\setminus S_{r-1}$ in $X_{r-1}$ is equal to the intersection
$X_{r-1}\cap \bP\Hom(\Lambda^{r-1}V_d,\Lambda^{r-1}V_{n-d})$. We claim that this 
intersection coincides with $\Proj(R_{r-1;d,n-d})$. In fact, by definition, $X_{r-1}$ is
the closure inside $\bP((\Lambda^dV)_{r-1})$ of the open $\exp(\fa_d)$ orbit of $[w_{r-1}]$, 
which is naturally identified with $\fa_d\simeq \Hom(V_d,V_{n-d})$. 
Let us denote the matrix of a homomorphism $\varphi\in \Hom(V_d,V_{n-d})$ by $z_{i,j}$,
$i=d+1,\dots,n$, $j=1,\dots,d$. 
The $I$-th Pl\"ucker coordinate of the point corresponding
to $\varphi$ is the minor $\Delta_I$ of the submatrix of the $n\times d$ matrix 
$z_0\mathrm{Id}_{d\times d} + \varphi$ supported on rows from $I$ (here $z_0$ is an auxiliary
variable). Hence, $\Delta_I$ is a polynomial in $z_{i,j}$ of total degree $\deg I$ multiplied 
by $z_0^{d-\deg I}$; note that possible values of $\deg I$ for $(\Lambda^dV)_{r-1}$ are from 
$0$ to $r-1$.

Now assume we have a polynomial in Pl\"ucker variables vanishing at all points of the open orbit.
We are interested in all the solutions such that $\Delta_I=0$ for $\deg I<r-1$.    
We claim that these are exactly the zeroes of all the polynomials involving only the maximal
degree terms (i.e. only $\Delta_I$ with $\deg I = r-1$). In fact, all the relations for the
points of the orbit are degree-homogeneous. Hence, if a monomial of a relation involves  $\Delta_I$ 
with $\deg I < r-1$ as a factor, then any other monomial should also have a factor 
$\Delta_{I'}$  with $\deg I' < r-1$. Since we are looking at the zeroes of the 
Pl\"ucker coordinates of degree less than $r-1$, such a relation  automatically holds true.

Finally, let us look at the coordinate $\Delta_I$ with $\deg I = r-1$. 
There exist $\al=(1\le \al_1<\dots < \al_{r-1}\le d)$ and 
$\beta=(d+1\le \beta_1<\dots < \beta_{r-1}\le n)$, such that  $\Delta_I(\varphi)$ is the
$(r-1)$ minor of the matrix $\varphi$ supported on rows $\al$ and columns $\beta$ (multiplied
by $z_0^{d-r+1}$). Hence the relations between such $\Delta_I$ are exactly the relations in
$R_{r-1;d,n-d}$.
\end{proof}

\begin{rem}
The scheme $\Proj(R_{r-1;d,n-d})$ is isomorphic to the the closure of the image of 
the natural birational map between the projective homomorphism spaces 
$\bP\Hom(V_d,V_{n-d})\dashrightarrow \bP\Hom(\Lambda^{r-1}V_d,\Lambda^{r-1}V_{n-d})$ sending
$[\varphi]$ to $[\Lambda^{r-1}\varphi]$. These schemes are normal, Cohen-Macaulay and have
 rational 	singularities (see \cite{BV88,BCRV22}).
\end{rem}

\subsection{Fibers of the blow-ups}
Let $\pi_r: \Bl_r \to \Gr_d(V)$
be the projection map; our goal here is to describe the fibers $\pi_r^{-1}(U)$.
Let $J\in \binom{[n]}{d}$ be a cardinality $d$ subset and let $p_J\in\Gr_d(V)$ be the 
corresponding point in the Grassmannian.
Let $A_J$ be the standard affine cell containing $p_J$, i.e. 
\[
A_J\simeq \Hom(p_J,V/p_J) \simeq \Hom(\mathrm{span}(v_j, j\in J),\mathrm{span}(v_j, j\notin J)). 
\]
To a homomorphism $\varphi\in \Hom(p_J,V/p_J)$ we attach 
the subspace $U_\varphi=\mathrm{span}(v_j+\varphi v_j, j\in J)\in \Gr_d(V)$; in particular, 
for $\varphi=0$ one gets $U_0=p_J$.
 
We want to describe the intersection $A_J\cap S_r$. Let $m=\deg J$
be the cardinality of $J_{>d}$. Let $X_\varphi$ be the $m\times m$ submatrix of $\varphi$
supported on columns from $J_{>d}$ and rows from $[d]\setminus J_{\le d}$.

\begin{lem}
A point $U_\varphi$ belongs to the intersection $A_J\cap S_r$ if and only if 
the rank of $X_\varphi$ is at most $m-r$.
\end{lem}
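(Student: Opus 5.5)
We use Lemma \ref{lem:dimint}: $U_\varphi\in S_r$ if and only if $\dim U_\varphi\cap V_{n-d}\ge r$. So it suffices to show
\[
\dim U_\varphi\cap V_{n-d} = m-\mathrm{rank}\, X_\varphi .
\]
We will read this off from a basis of $U_\varphi$.

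Write $J=J_{\le d}\sqcup J_{>d}$ with $|J_{>d}|=m$, so that $|J_{\le d}|=d-m$ and $K:=[d]\setminus J_{\le d}$ has $m$ elements. The space $U_\varphi$ has basis $u_j=v_j+\varphi v_j$, $j\in J$, where $\varphi v_j$ is a combination of $v_i$ with $i\notin J$.

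A general vector of $U_\varphi$ is $u=\sum_{j\in J}c_ju_j$. Let $\mathrm{pr}:V\to V_d$ be the projection along $V_{n-d}$. The vector $u$ lies in $V_{n-d}$ exactly when $\mathrm{pr}(u)=0$. Split the $V_d$-coordinates of $u$ into those indexed by $J_{\le d}$ and those indexed by $K$.

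\emph{Coordinates in $J_{\le d}$.} Since $\varphi v_j$ has no component along $v_i$ with $i\in J$, the coefficient of $v_i$ for $i\in J_{\le d}$ is exactly $c_i$. Hence $\mathrm{pr}(u)=0$ forces $c_i=0$ for all $i\in J_{\le d}$, and then $u=\sum_{j\in J_{>d}}c_ju_j$.

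\emph{Coordinates in $K$.} For $j\in J_{>d}$, the vector $v_j$ already lies in $V_{n-d}$. So the $V_d$-component of $u_j$ is the $K$-part of $\varphi v_j$, which is the $j$-th column of $X_\varphi$. The remaining condition is therefore $X_\varphi c=0$ for $c=(c_j)_{j\in J_{>d}}\in\bC^m$.

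Because the $u_j$ are linearly independent, the map $c\mapsto \sum c_ju_j$ identifies $\ker X_\varphi$ with $U_\varphi\cap V_{n-d}$. This gives $\dim U_\varphi\cap V_{n-d}=m-\mathrm{rank}\,X_\varphi$, as claimed. The inequality $\dim U_\varphi\cap V_{n-d}\ge r$ is then equivalent to $\mathrm{rank}\,X_\varphi\le m-r$.

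The only delicate point is the orientation convention for $X_\varphi$, namely columns indexed by $J_{>d}$ and rows by $K$. One must also note that the affine chart $A_J$ is the full big cell around $p_J$ (not a Schubert cell), so $\varphi$ has arbitrary entries in all positions $i\notin J$. When $m<r$ the condition $\mathrm{rank}\,X_\varphi\le m-r$ is never satisfied, and correspondingly $A_J\cap S_r=\emptyset$.
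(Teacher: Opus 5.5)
Your proposal is correct and follows the same route as the paper: it reduces membership in $S_r$ to $\dim U_\varphi\cap V_{n-d}\ge r$ via Lemma \ref{lem:dimint}, then identifies $U_\varphi\cap V_{n-d}$ with $\ker X_\varphi$. You also spell out the coordinate computation the paper leaves implicit, namely that the $J_{\le d}$-coefficients must vanish and the remaining condition is $X_\varphi c=0$.
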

\begin{proof}
The condition $\mathrm{rk} X_\varphi\le m-r$ is equivalent to 
\[
\dim \bigl(\mathrm{span}\{v_j+\varphi(v_j),\ j\in J_{>d}\}\cap V_{n-d}\bigr)\ge r,
\]
which in turn is equivalent to $\dim U_\varphi\cap V_{n-d}\ge r$.
\end{proof}

Let $Y_{N,c}\subset \mathrm{Mat}_{N\times N}$ be the determinantal variety consisting 
of matrices of rank at most $c$ \cite{Eis95}. 
The variety $Y_{N,c}$ is the vanishing set of the prime ideal generated by all $(c+1)\times (c+1)$
minors of an $N\times N$ matrix. 

\begin{cor}
Assume $m=|J_{>d}|\ge r$. Then  the	
intersection $A_J\cap S_r$ is isomorphic to a product of a determinantal variety and 
an affine space. 
\end{cor}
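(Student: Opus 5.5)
The plan is to read the statement off directly from the preceding lemma. That lemma identifies $A_J\cap S_r$, inside the affine cell $A_J\simeq \Hom(p_J,V/p_J)\simeq\bA^{d(n-d)}$, with the set of $\varphi$ for which $\mathrm{rk}\,X_\varphi\le m-r$. Here $X_\varphi$ is the $m\times m$ submatrix of $\varphi$ with columns indexed by $J_{>d}$ and rows indexed by $[d]\setminus J_{\le d}$. This set has the same size since $|[d]\setminus J_{\le d}|=d-(d-m)=m$.

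First I will choose coordinates. Write $\varphi$ by its matrix entries $\varphi_{i,j}$, with $j\in J$ and $i\notin J$. These entries are free affine coordinates on $A_J$. Split them into two groups:
\begin{itemize}
\item the $m^2$ entries with $j\in J_{>d}$ and $i\in[d]\setminus J_{\le d}$, which form $X_\varphi$;
\item the remaining $d(n-d)-m^2$ entries.
\end{itemize}
This splitting gives a product decomposition
\[
A_J\simeq \mathrm{Mat}_{m\times m}\times \bA^{d(n-d)-m^2},\qquad \varphi\mapsto (X_\varphi,\ \text{other entries}).
\]

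Next I will use that the condition of the lemma involves only the first factor. So as a set,
\[
A_J\cap S_r \simeq Y_{m,m-r}\times \bA^{d(n-d)-m^2}.
\]
The hypothesis $m\ge r$ guarantees $m-r\ge 0$, so $Y_{m,m-r}$ makes sense. In the extreme case $m=r$ it is the single point $\{0\}$, and the intersection is then just an affine space.

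The one point that needs care is the scheme structure rather than the set. The lemma is set-theoretic. To upgrade it, I want the reduced ideal of $S_r$ restricted to $A_J$ to equal the ideal of $(m-r+1)$-minors of $X_\varphi$, extended to the polynomial ring in all coordinates. Two facts give this:
\begin{itemize}
\item $S_r$ is a reduced variety, and $A_J\cap S_r$ is an open subset of it, hence reduced.
\item The determinantal ideal is prime, as recalled before the statement, and it stays prime after adjoining the free variables.
\end{itemize}
Two reduced closed subschemes of $A_J$ with the same underlying set coincide, so the set-theoretic description suffices.

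If one wants the explicit equations, I would also check that the Plücker-type conditions cutting out $S_r$ really pull back to minors of $X_\varphi$ alone. The condition $\dim U_\varphi\cap V_{n-d}\ge r$ says that the composition $U_\varphi\to V\to V_d$ has rank at most $d-r$. Its matrix has the block form $\begin{pmatrix}\mathrm{Id}_{d-m} & *\\ 0 & X_\varphi\end{pmatrix}$ after ordering the coordinates appropriately. Column operations then remove the block $*$, so the rank is $(d-m)+\mathrm{rk}\,X_\varphi$, and the condition becomes exactly $\mathrm{rk}\,X_\varphi\le m-r$.
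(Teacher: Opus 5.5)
Your proposal is correct and follows the paper's argument. Both split the coordinates of $A_J$ into the $m^2$ entries of $X_\varphi$ and the remaining $d(n-d)-m^2$ free entries, and both observe that the rank condition from the preceding lemma involves only $X_\varphi$, which gives $A_J\cap S_r\simeq Y_{m,m-r}\times\bA^{d(n-d)-m^2}$; your additional check of the reduced structure, via reducedness of $S_r$ and primality of the determinantal ideal after adjoining free variables, fills in a point the paper leaves implicit.
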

\begin{proof}
The condition $\mathrm{rk} X_\varphi\le m-r$ is equivalents to the vanishing of all 
$m-r+1$ minors of $X_\varphi$. Since there are no other conditions on the
entries of the matrix of $\varphi$, we arrive at the following isomorphism:
\[
A_J\cap S_r \simeq Y_{m,m-r}\times \bA^{d(n-d)-m^2}
\] 
(here $d(n-d)-m^2$ is equal to the number of entries of the matrix $\varphi$ which are not 
in $X_\varphi$).
\end{proof}

\begin{cor}
Here are several extreme cases:
\begin{itemize}
\item if $r=1$, then $A_J\cap S_r$ is cut out by a single equation $\det X_\varphi =0$;
\item if $r=2$, then $A_J\cap S_r$ is the common vanishing set of all maximal minors of $X_\varphi$;
\item if $r=m$, then $A_J\cap S_r$ is just the affine space $\bA^{d(n-d)-r^2}$;
\item if $r>m$, then $A_J\cap S_r$ is empty.
\end{itemize}
\end{cor}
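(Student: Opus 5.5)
The four items follow from the preceding corollary, $A_J\cap S_r\simeq Y_{m,m-r}\times\bA^{d(n-d)-m^2}$, and the preceding lemma, which says that $U_\varphi\in S_r$ if and only if $\mathrm{rk}\,X_\varphi\le m-r$. Here $m=|J_{>d}|$ and $X_\varphi$ is the $m\times m$ submatrix of $\varphi$ supported on columns $J_{>d}$ and rows $[d]\setminus J_{\le d}$. The plan is to specialize these to each extreme value of $r$ relative to $m$. For the first three items we assume $m\ge r$, and we treat $r>m$ separately.

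For $r=1$ the condition is $\mathrm{rk}\,X_\varphi\le m-1$. The ideal of $Y_{m,m-1}$ is generated by the $m\times m$ minors of an $m\times m$ matrix, and there is only one such minor, $\det X_\varphi$. So $A_J\cap S_1$ is the hypersurface $\det X_\varphi=0$, which is consistent with $S_1$ being a divisor. For $r=2$ the condition is $\mathrm{rk}\,X_\varphi\le m-2$. This is cut out by the $(m-1)\times(m-1)$ minors of $X_\varphi$, which are the maximal minors among the proper ones, so the vanishing of all of them gives the stated set. For $r=m$ the condition is $\mathrm{rk}\,X_\varphi\le 0$, which means $X_\varphi=0$ entrywise. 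These are linear equations on $m^2=r^2$ coordinates, leaving the affine space $\bA^{d(n-d)-r^2}$ in the remaining entries of $\varphi$; this also matches $\mathrm{codim}\,S_r=r^2$.

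For $r>m$ we do not use the corollary and argue directly from Lemma~\ref{lem:dimint}. Every $U_\varphi\in A_J$ satisfies $\dim U_\varphi\cap V_{n-d}\le m$. To see this, write the basis vectors $v_j+\varphi v_j$ for $j\in J$. The projection of $U_\varphi$ to $V_d$ along $V_{n-d}$ contains the images of $v_j+\varphi v_j$ for $j\in J_{\le d}$. These images equal $v_j$ plus components in $[d]\setminus J$, so they are linearly independent, and the projection has rank at least $d-m$. Hence the kernel $U_\varphi\cap V_{n-d}$ has dimension at most $m<r$, so $U_\varphi\notin S_r$ and the intersection is empty. The same conclusion also follows formally from the rank condition, since $\mathrm{rk}\,X_\varphi\le m-r<0$ cannot hold.

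The only point that needs care is the independence of the projected vectors in the case $r>m$. It holds because $\varphi$ maps into the span of the $v_i$ with $i\notin J$, so the projections are unitriangular with respect to the basis vectors $v_j$, $j\in J_{\le d}$. The other three cases are immediate specializations of the determinantal description.
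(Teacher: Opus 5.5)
Your proposal is correct and takes essentially the paper's approach: the paper treats these items as immediate specializations of the preceding lemma ($U_\varphi\in S_r$ iff $\mathrm{rk}\,X_\varphi\le m-r$) and of the isomorphism $A_J\cap S_r\simeq Y_{m,m-r}\times\bA^{d(n-d)-m^2}$. Your direct dimension count for $r>m$ is a valid extra check, and reading ``maximal minors'' as the $(m-1)\times(m-1)$ minors matches how the paper uses the term.
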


Now let us consider the entries of the matrix $X_\varphi$ as variables.  
Let $R_{m,r}$ be the subring of the polynomial ring in entries of $X_\varphi$ generated 
by all $m-r+1$ minors $\Delta_{\alpha,\beta}$ 
for all 
$\al,\be\in\binom{m}{m-r+1}$. The projective spectrum of the ring $R_{m,r}$ is isomorphic to
the space of partial collineations. Namely, let 
\begin{equation}\label{eq:E1E2}
E_1=\mathrm{span}\{v_j:\ j\in J_{>d}\},\ 
E_2=\mathrm{span}\{v_j:\ j\in [d]\setminus J_{\le d}\}
\end{equation}
(in particular, $X_\varphi\in\Hom(E_1,E_2)$).
We consider the birational map
\[
\bP\Hom(E_1,E_2)\to \bP\Hom(\Lambda^{m-r+1} E_1,\Lambda^{m-r+1} E_2),\ f\mapsto \Lambda^{m-r+1} f
\]
and define partial collineations $\Col_{m-r+1}(E_1,E_2)$ as the closure of the image of this map.
We will also denote this space by $\Col_{m-r+1}(m)$, where $m=\dim E_1 =\dim E_2$.

\begin{lem}
One has $\Col_{m-r+1}(m)\simeq \Proj(R_{m,r})$. 
\end{lem}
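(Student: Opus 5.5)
The plan is to identify both sides as the Proj of the same graded ring. Let $k=m-r+1$ and take $Z=(z_{\al,\be})$ to be the generic $m\times m$ matrix of coordinates on $\Hom(E_1,E_2)$. By definition $R_{m,r}=\bigoplus_{t\ge 0}(R_{m,r})_t$, where $(R_{m,r})_t$ is the span of products of $t$ minors of size $k$. I regrade it so that the minors sit in degree one. This ring is a quotient of the polynomial ring $\mathrm{Sym}^\bullet\bigl(\Hom(\Lambda^kE_1,\Lambda^kE_2)^*\bigr)$, where the coordinate $x_{\al,\be}$ indexed by $\al,\be\in\binom{[m]}{k}$ is sent to $\Delta_{\al,\be}(Z)$. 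Thus $\Proj(R_{m,r})$ is a closed subscheme of $\bP\Hom(\Lambda^kE_1,\Lambda^kE_2)$, cut out by the kernel $K$ of the surjection
$$\Psi:\mathrm{Sym}^\bullet\bigl(\Hom(\Lambda^kE_1,\Lambda^kE_2)^*\bigr)\to R_{m,r}.$$

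The key steps are as follows.

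\textbf{Step 1.} I check that the image of the rational map $[f]\mapsto[\Lambda^kf]$ lands in $V(K)$. The matrix coefficients of $\Lambda^kf$ in the bases $v_\al=v_{\al_1}\wedge\dots\wedge v_{\al_k}$ are exactly the $k\times k$ minors $\Delta_{\al,\be}(f)$. A homogeneous polynomial $P\in K$ satisfies $P(\Delta(Z))=0$ identically in $Z$, so $P$ vanishes at every point $[\Lambda^kf]$ with $f$ of rank at least $k$, the locus where $\Lambda^kf\neq 0$. Since the map is defined on a dense open subset of $\bP\Hom(E_1,E_2)$, its image closure $\Col_{m-r+1}(m)$ is contained in $V(K)$.

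\textbf{Step 2.} I prove the reverse inclusion. The ring $R_{m,r}$ is a subring of a polynomial ring, hence a domain, so $K$ is prime and $V(K)$ is irreducible and reduced. I must show that every homogeneous $P$ vanishing on the image lies in $K$. The image contains all $[\Lambda^kf]$ with $f$ invertible, and these are Zariski dense in $\bA^{m^2}$. Hence $P(\Delta(Z))$ vanishes on a dense set, so $P(\Delta(Z))=0$ as a polynomial and $P\in K$. Therefore the homogeneous ideal of the closure of the image equals $K$, and
$$\Col_{m-r+1}(m)=V(K)=\Proj(\mathrm{Sym}^\bullet/K)=\Proj(R_{m,r}).$$

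\textbf{Step 3.} I handle the grading subtlety: $\Proj$ of the subring generated in degree $k$ with its original grading equals $\Proj$ of its Veronese regrading. This is the standard identity $\Proj(S^{(k)})=\Proj(S)$, applied here to a ring generated in a single degree, so it should be harmless.

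I expect no real obstacle. The only point needing care is the scheme-theoretic versus set-theoretic identification, that is, making sure the closure is taken with its reduced structure, which is how $\Col$ is defined. Primality of $K$, which comes from $R_{m,r}$ being a domain, settles this. The argument is the same one the paper used for $X_{r-1}\supset\Proj(R_{r-1;d,n-d})$ in the preceding Proposition.
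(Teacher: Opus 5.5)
Your proof is correct and takes the same approach as the paper. Both arguments identify the homogeneous ideal of the closure of the image of $[f]\mapsto[\Lambda^{m-r+1}f]$ with the kernel of the map sending the coordinates $x_{\al,\be}$ to the minors $\Delta_{\al,\be}$. Your version also makes explicit what the paper leaves implicit: the kernel is prime, so the closure with its reduced structure is exactly $\Proj$ of the quotient, and regrading $R_{m,r}$ so the minors sit in degree one does not change $\Proj$.
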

\begin{proof}
By definition, the variety $\Col_{m-r+1}(m)$ sits in the projective space 
$\bP(\Lambda^{m-r+1} E_1,\Lambda^{m-r+1} E_2)$
with the coordinates labeled by pairs $\al,\be\in \binom{[m]}{m-r+1}$.
The defining relations cutting out $\Col_{m-r+1}(m)$ are exactly the (homogeneous) algebraic 
relations between the minors $\Delta_{\al,\be}$. 	
\end{proof}
 
Now let $\pi_r$ be the standard projection from the blow-up of $A_J\subset \Gr_d(V)$ along $A_J\cap S_r$ to $A_J$.  
We arrive at the following proposition, where $E_1,E_2$ are defined in \eqref{eq:E1E2}. 
\begin{prop}\label{prop:r-fibers}
Let $m=|J_{>d}|$. Then $\pi_r^{-1}(p_J)\simeq \Col_{m+1-r}(E_1,E_2)$.
\end{prop}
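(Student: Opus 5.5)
The plan is to reduce the statement to a local computation on the affine chart $A_J$ and then identify the fiber of a blow-up of a product with an affine space along a determinantal subvariety.

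By the Corollary above, $A_J\cap S_r\simeq Y_{m,m-r}\times\bA^{d(n-d)-m^2}$. This identification is compatible with the splitting of the coordinates of $A_J\simeq\Hom(p_J,V/p_J)$ into the entries of $X_\varphi$ and the remaining entries. Moreover, the ideal of $A_J\cap S_r$ in $\bC[A_J]$ is generated by the $(m-r+1)$-minors $\Delta_{\al,\be}$ of $X_\varphi$. Here I would use that the determinantal ideal is prime (as recalled above) and that the ideal sheaf of the Schubert variety restricted to the chart is exactly this determinantal ideal. One inclusion is clear from the Lemma describing $A_J\cap S_r$ by $\mathrm{rk}X_\varphi\le m-r$; equality of ideals follows since both are radical with the same zero set. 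Since blow-ups commute with flat base change, and $A_J$ is open, we get
\[
\pi_r^{-1}(A_J)\simeq \Bl_{Y_{m,m-r}}\mathrm{Mat}_{m\times m}\times \bA^{d(n-d)-m^2},
\]
with the point $p_J$ corresponding to $(0,0)$. So it suffices to compute the fiber over the zero matrix of the blow-up $B=\Bl_{Y_{m,m-r}}\mathrm{Mat}_{m\times m}$.

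Write $I\subset\bC[X]$ for the ideal of $(m-r+1)$-minors. Then $B=\Proj\bigoplus_k I^k$, and it embeds into $\mathrm{Mat}_{m\times m}\times\bP\Hom(\Lambda^{m-r+1}E_1,\Lambda^{m-r+1}E_2)$ as the closure of the graph of $X\mapsto[\Lambda^{m-r+1}X]$ over the open set where $\mathrm{rk}X>m-r$. The fiber over $0$ is $\Proj\bigl(\bigoplus_k I^k/\mathfrak m I^k\bigr)$, where $\mathfrak m$ is the maximal ideal of $0$.

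Since $I$ is generated by forms of the single degree $m-r+1$, we have $I^k/\mathfrak m I^k\simeq (I^k)_{k(m-r+1)}$, the degree $k(m-r+1)$ part of $I^k$. These graded pieces are exactly the degree $k$ components of the subalgebra $R_{m,r}$ generated by the minors. Hence the fiber ring is $R_{m,r}$, and by the Lemma $\Proj(R_{m,r})\simeq\Col_{m+1-r}(E_1,E_2)$.

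Geometrically, the same argument says that the fiber over $0$ of the graph closure of a map given by homogeneous forms of equal degree is the closure of the image of the map itself. Indeed, rescaling $X\mapsto tX$ fixes the point in projective space and sends it to $0$ as $t\to 0$. This gives the reverse inclusion directly and makes the identification transparent.

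The main obstacle is the first step: verifying that the scheme-theoretic ideal of $S_r$ on $A_J$ is generated by the minors of $X_\varphi$ alone. A set-theoretic description is not enough, since the blow-up depends on the ideal and not just on its zero set. I would deduce this from primality of determinantal ideals together with reducedness of Schubert varieties; the latter holds because Schubert varieties are reduced by standard results, e.g.\ \cite{Kum02}. The remaining subtlety is that the ideal of the product $Y_{m,m-r}\times\bA^{d(n-d)-m^2}$ is the extension of the ideal of $Y_{m,m-r}$. This holds because the extension of a prime ideal to a polynomial ring in new variables is again prime.
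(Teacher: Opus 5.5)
Your proof is correct and follows the same route as the paper. Both arguments restrict to the chart $A_J$, realize the blow-up as the closure of the graph of $\varphi\mapsto[\Lambda^{m-r+1}X_\varphi]$, and identify the fiber over $\varphi=0$ with $\Proj(R_{m,r})$. You also supply two justifications that the paper's two-line argument leaves implicit: the ideal of $A_J\cap S_r$ is generated by the $(m-r+1)$-minors (by primality of determinantal ideals and reducedness), and the special fiber $\bigoplus_k I^k/\mathfrak m I^k$ equals $R_{m,r}$ because $I$ is generated in a single degree.
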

\begin{proof}
By definition, the blow-up $\Bl_{A_J\cap S_r}A_J$ sits inside the product of 
$A_J$ and $\bP\Hom(\Lambda^{m-r+1}E_1,\Lambda^{m-r+1}E_2)$ as the closure of the 
set of points of the form $(\varphi,[\Lambda^{m-r+1}X_\varphi])$.
Now $p_J\in A_J$ corresponds to $\varphi=0$ and hence the fiber over $p_J$ is exactly $\Proj(R_{m,r})\simeq \Col_{m-r+1}(m)$, which is 
realized as  $\Col_{m-r+1}(E_1,E_2)$.
\end{proof}

Let $\mathrm{pr}: V\to V_d$ be the projection sending $V_{n-d}$ to zero.

\begin{cor}
For a point $U\in \Gr_d(V)$ such that $\dim U\cap V_{n-d} = m$, one has 
\[
\pi_r^{-1} U \simeq \Col_{m-r+1}(U\cap V_{n-d}, V_d/\mathrm{pr} (U))\simeq 
\Col_{m-r+1}(m).
\] 	
\end{cor}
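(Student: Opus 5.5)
The plan is to reduce to Proposition \ref{prop:r-fibers} using the $P^-_d$-symmetry of the whole situation. Since $S_r$ is $P^-_d$-invariant, the group $P^-_d$ acts on $\Bl_r$ and the projection $\pi_r$ is $P^-_d$-equivariant. Consequently, for $g\in P^-_d$ the fiber $\pi_r^{-1}(gU)$ is isomorphic to $\pi_r^{-1}(U)$, the isomorphism being given by the action of $g$.

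The first step is to show that every $U$ with $\dim U\cap V_{n-d}=m$ lies in the $P^-_d$-orbit of $p_{I(m)}$. To see this, choose a basis of $U\cap V_{n-d}$ and extend it to a basis of $U$ by vectors $u_1,\dots,u_{d-m}$. Their projections $\mathrm{pr}(u_i)$ span $\mathrm{pr}(U)$, which has dimension $d-m$.

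An element of the Levi part $GL(V_d)\times GL(V_{n-d})$ (with the determinant normalized) sends $\mathrm{pr}(U)$ to $\mathrm{span}(v_1,\dots,v_{d-m})$ and $U\cap V_{n-d}$ to $\mathrm{span}(v_{d+1},\dots,v_{d+m})$. Then an element of $\exp(\fa_d)$ kills the $V_{n-d}$-components of the $u_i$, which is possible because $\fa_d$ consists of all maps $V_d\to V_{n-d}$. The result is $p_{I(m)}$. This is essentially the argument already used to prove that $S_r$ is $P^-_d$-invariant.

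Next, apply Proposition \ref{prop:r-fibers} with $J=I(m)$. There $E_1=\mathrm{span}(v_{d+1},\dots,v_{d+m})=p_J\cap V_{n-d}$ and $E_2=\mathrm{span}(v_{d-m+1},\dots,v_d)$, which is a complement to $\mathrm{pr}(p_J)$ in $V_d$ and so is identified with $V_d/\mathrm{pr}(p_J)$. We also need that the blow-up of $A_J$ along $A_J\cap S_r$ is the open piece $\pi_r^{-1}(A_J)$ of $\Bl_r$. This holds because blow-ups commute with restriction to open subsets.

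To get the canonical form $\Col_{m-r+1}(U\cap V_{n-d},V_d/\mathrm{pr}(U))$, I transport the identification along the element $g$ found above. The stabilizer action shows that the pair $(E_1,E_2)$ transforms as $(U\cap V_{n-d},\,V_d/\mathrm{pr}(U))$. I would verify this compatibility by checking that, in the chart $A_J$, the submatrix $X_\varphi$ is exactly the composition $U_\varphi\cap\ldots$ induced map $E_1\to V/U\to V_d/\mathrm{pr}$. Concretely, $X_\varphi$ is the map $p_J\cap V_{n-d}\to V_d/\mathrm{pr}(p_J)$ obtained from $\varphi$, and it is therefore functorial under $P^-_d$. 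The final isomorphism with the abstract $\Col_{m-r+1}(m)$ follows from choosing bases.

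The main obstacle is this naturality check: confirming that the $P^-_d$-action on the fiber corresponds to the induced action on $\Hom(\Lambda^{m-r+1}E_1,\Lambda^{m-r+1}E_2)$, up to scalar. Up to abstract isomorphism, however, the statement already follows from the transitivity step together with Proposition \ref{prop:r-fibers}. The cases $m<r$ (where the fiber is a point, since $\Col$ of a nonpositive index is taken to be a point) should be noted separately.
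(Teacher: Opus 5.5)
Your proof is correct, and it follows a somewhat different route from the paper's.

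The paper's one-line proof works locally. It recenters the affine cell at $U$ itself, using a complement to $U$ adapted to $V=V_d\oplus V_{n-d}$, and reruns the computation behind Proposition~\ref{prop:r-fibers} in that chart. There the relevant $m\times m$ block is the map $U\cap V_{n-d}\to V_d/\mathrm{pr}(U)$ induced by the chart coordinate, so the canonical form $\Col_{m-r+1}(U\cap V_{n-d},V_d/\mathrm{pr}(U))$ comes out directly.

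You argue globally instead:
\begin{itemize}
\item $\pi_r$ is $P^-_d$-equivariant.
\item $P^-_d$ acts transitively on the stratum $\{\dim U\cap V_{n-d}=m\}$; your Levi-then-$\exp(\fa_d)$ normalization to $p_{I(m)}$ is right.
\item Hence the fiber over $U$ is carried isomorphically onto the fiber over $p_{I(m)}$, where the proposition applies verbatim with the $E_1,E_2$ you identified.
\end{itemize}
This avoids redoing any chart computation. The cost is that the canonical identification then needs the naturality check you flag.

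As you observe, that check is not needed for the statement as written. Both $U\cap V_{n-d}$ and $V_d/\mathrm{pr}(U)$ have dimension $m$, so all three collineation spaces are isomorphic after choosing bases. Your remark on the range $m<r$ (where $U\notin S_r$ and the fiber is a point) is a sensible addition that the paper leaves implicit.
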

\begin{proof}
Follows from Proposition \ref{prop:r-fibers} by shifting the center of the cell $A_J$ from $p_J$ to $U$.	
\end{proof}

\begin{example}
Let $r=2$. Then all the fibers of the blow-up $\mathrm{Bl}_{S_2}\Gr_d(V)$ are projective spaces.
In fact, the algebra $R_{m,2}$ is generated by all the maximal minors (of size $(m-1)$), 
which are algebraically independent. So the fiber over a point $U\in \Gr_d(V)$ is isomorphic to $\bP^{m^2-1}$, where 	$m=\dim U\cap V_{n-d}$.
\end{example}

\begin{rem}
We note that the defining relations between the generating minors $\Delta_{\al,\be}$ in $R_{m,r}$ 
are very complicated for $r>2$, in particular, not quadratic (see e.g. \cite{BV88,BCV13}).  
\end{rem}

\section{Blow-ups of Grassmannians and ideal sheaves}\label{sec:resol}
The goal of this section is to construct a resolution  of the ideal sheaf of $S_r$.
As a consequence we derive the embedding of the blow-up 
 $\Bl_{S_r}\Gr_d(V)$ into $\Gr_d(V)\times X_{r-1}$.
 The existence of such an embedding also follows from the general 
construction from Section \ref{sec:general}.
  
 We consider the desingularization \cite{BK05,D74}
 \[
 \widetilde S_r = \{(U_r,U_d)\in \Gr_r(V_{n-d})\times \Gr_d(V):\ U_r\subset U_d\}. 
 \]
Clearly, $\widetilde S_r$ is smooth, as fibration over $\Gr_r(V_{n-d})$ with smooth fibers, and admits 
a natural projection $q:\widetilde S_r\to S_r$ sending $(U_r,U_d)$ to $U_d$
(since $U_d\supset U_r\subset V_{n-d}$, one gets $\dim U_d\cap V_{n-d}\ge r$).
For a point $(U_r,U_d)\in\widetilde S_r$ one has the following sequence
\begin{equation}\label{eq:UrUd}
U_r\hookrightarrow  V_{n-d}\subset V \twoheadrightarrow V/U_d\to 0
\end{equation}
and hence $\widetilde S_r$ is identified with the scheme of zeroes of a section of the following vector
bundle on $\Gr_r(V_{n-d})\times \Gr_d(V)$. Let $\eU_r$ be the tautological $r$-dimensional
vector bundle on $\Gr_r(V_{n-d})$ and let $\eU_d$ be the tautological $d$-dimensional
vector bundle on $\Gr_d(V)$. Then \eqref{eq:UrUd} implies that $\widetilde S_r$ is
the scheme of zeroes of an element of $H^0(\eU_r^*\T V(\eO)/\eU_d)$.
We thus arrive at the following lemma 
(we use the standard notation $\eU^\perp_d=(V(\eO)/\eU_d)^*$).

\begin{lem}
The ideal sheaf $\eI_{\widetilde S_r}$ admits the following Koszul resolution
\begin{equation}\label{eq:Kosres}
\dots\to\Lambda^2(\eU_r\T \eU_d^\perp)\to \eU_r\T \eU_d^\perp\to \eI_{\widetilde S_r}\to 0.	
\end{equation}
\end{lem}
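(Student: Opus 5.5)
The plan is to realize $\widetilde S_r$ as the zero scheme of a regular section of a vector bundle on the smooth variety $Y=\Gr_r(V_{n-d})\times\Gr_d(V)$, and then apply the standard Koszul complex. First I will construct the section. The composite $\eU_r\hookrightarrow V_{n-d}\T\eO_Y\subset V(\eO_Y)\twoheadrightarrow V(\eO_Y)/\eU_d$ of bundle maps on $Y$ (pulled back from the two factors) gives a global section
\[
s\in H^0\bigl(Y,\eU_r^*\T V(\eO)/\eU_d\bigr)=H^0\bigl(Y,(\eU_r\T\eU_d^\perp)^*\bigr).
\]
Its zero locus set-theoretically consists of the pairs $(U_r,U_d)$ with $U_r\subset U_d$, which is $\widetilde S_r$. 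The bundle $\eE=(\eU_r\T\eU_d^\perp)^*$ has rank $r(n-d)$.

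Next I will show that $s$ is a regular section and that its scheme-theoretic zero locus is the reduced variety $\widetilde S_r$. This is the main point. Since $Y$ is smooth (hence Cohen--Macaulay), it suffices to check two things: the zero scheme $Z(s)$ is smooth, and it has codimension $\operatorname{rk}\eE=r(n-d)$. For this I will work locally over $\Gr_r(V_{n-d})$, where $\eU_r$ is trivialized. Fix $U_r$. The fiber of $Z(s)$ over $U_r$ is the zero scheme in $\Gr_d(V)$ of the induced section of $\mathrm{Hom}(U_r,V/\eU_d)$. In a standard affine chart of $\Gr_d(V)$ adapted to a basis extending a basis of $U_r$, the condition $U_r\subset U_d$ is given by $r(n-d)$ coordinate functions vanishing, i.e. it is linear in the chart coordinates. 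Thus the fiber is scheme-theoretically $\Gr_{d-r}(V/U_r)$, which is smooth of dimension $(d-r)(n-d)$. Together with smoothness of the base, $Z(s)$ is smooth and locally trivial over $\Gr_r(V_{n-d})$. Its codimension in $Y$ is $d(n-d)-(d-r)(n-d)=r(n-d)$, as required. Equivalently, one can verify the transversality $ds$ surjective at each zero directly in these coordinates.

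Given regularity, the Koszul complex $\Lambda^\bullet\eE^*\to\eO_Y$ is exact and resolves $\eO_{\widetilde S_r}$. Truncating off $\eO_Y$ yields the stated resolution
\[
\cdots\to\Lambda^2(\eU_r\T\eU_d^\perp)\to\eU_r\T\eU_d^\perp\to\eI_{\widetilde S_r}\to0,
\]
since $\eE^*=\eU_r\T\eU_d^\perp$. I expect the only delicate step to be the local coordinate check that the equations are reduced, i.e. that the scheme-theoretic zero locus is smooth. The chart choice above handles it, because $s$ restricts to a subset of the affine coordinates.
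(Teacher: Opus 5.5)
Your proposal is correct and follows the same route as the paper. You realize $\widetilde S_r$ as the zero scheme of the section of $\eU_r^*\T V(\eO)/\eU_d$ on $\Gr_r(V_{n-d})\times\Gr_d(V)$ induced by $U_r\hookrightarrow V_{n-d}\subset V\twoheadrightarrow V/U_d$, and take its Koszul complex, whose terms are $\Lambda^k(\eU_r\T\eU_d^\perp)$. The paper leaves the regularity of this section implicit. Your check supplies exactly what is needed to make the Koszul complex exact and to identify the zero scheme with the reduced $\widetilde S_r$: the zero scheme is smooth of codimension $r(n-d)$, equal to the rank, which can also be seen from surjectivity of the restriction $\Hom(U_d,V/U_d)\to\Hom(U_r,V/U_d)$ appearing in $ds$.
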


Our next goal is to compute the resolution of the sheaf $\eI_{S_r}$. To this end we 
use the decomposition
\begin{equation}\label{eq:Lambda}
\Lambda^k(\eU_r\T \eU_d^\perp) \simeq \bigoplus_\la \Sigma_\la \eU_r \T \Sigma_{\la^t} \eU_d^\perp,
\end{equation}
where the sum ranges over partitions 
\begin{equation}\label{eq:part}
\la=(\la_1\ge\dots\ge \la_r), \ \la_1+\dots +\la_r = k,\  n-d\ge\la_1
\end{equation} 
and $\Sigma$ is the Schur functor. Recall that $S_r\subset \Gr_d(V)$ is a Schubert variety and thus has rational singularities \cite{BK05,Kum02}. Hence 
we can use the desingularization $q:\widetilde S_r\to S_r$ in order to produce
the resolution for the sheaf $\eI_{S_r}$. 
More precisely, we compute the direct image of \eqref{eq:Kosres} with respect to the map $q_*$. 
To this end, we need to compute the cohomologies of the sheaves $\Sigma_\la\eU_r$ along the 
fibers of the map $q$.

For a tuple $\la=(\la_1,\dots,\la_r)$ we use the notation $\la^*=(-\la_r,\dots,-\la_1)$ and $|\la|=\sum_{i=1}^r \la_i$. 
We also denote by $\rho$ the collection 
$(n-d,\dots,1)$ and if $\la^*+\rho$ has no coinciding entries, we let
$\sigma\in S_{n-d}$ denote the permutation reshuffling $\la^*+\rho$
into a decreasing order (see subsection \ref{subsec:Schurf}). 

\begin{lem}\label{lem:complex}
One has the following complex of sheaves on $\Gr_d(V)$:
\[
0\to \eF_{r^2}\to \dots \to \eF_1\to \eF_0\to\eO_{S_r}\to 0,
\]	
with
\[
\eF_k = \bigoplus_{\substack{n-d\ge\la_1\ge\dots\ge \la_r\ge 0\\ |\la|-\ell(\sigma)=k}} \Sigma_{\sigma(\la^*+\rho)-\rho} V_{n-d}^*\T\Sigma_{\la^t} \eU_d^\perp,
\]
where the tuples $\la$ which show up satisfy the condition that $\la^*+\rho$ has no coinciding entries.
\end{lem}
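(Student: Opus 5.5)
We will obtain the complex by the geometric technique of Kempf--Lascoux--Weyman, i.e. by pushing the Koszul resolution \eqref{eq:Kosres} forward along the desingularization. First note that $\widetilde S_r$ is the zero scheme of a regular section of $\eU_r^*\T V(\eO)/\eU_d$ on $Z=\Gr_r(V_{n-d})\times\Gr_d(V)$: the codimension of $\widetilde S_r$ in $Z$ equals $r(n-d)$, which is the rank of this bundle. Hence the full Koszul complex
\[
0\to\Lambda^{r(n-d)}(\eU_r\T\eU_d^\perp)\to\dots\to\eU_r\T\eU_d^\perp\to\eO_Z\to\eO_{\widetilde S_r}\to 0
\]
is exact. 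Its terms decompose by \eqref{eq:Lambda} into $\Sigma_\la\eU_r\T\Sigma_{\la^t}\eU_d^\perp$. Here $\la$ has at most $r$ rows, since $\eU_r$ has rank $r$, and $\la^t$ has at most $n-d$ rows, since $\eU_d^\perp$ has rank $n-d$. This gives exactly the range $n-d\ge\la_1\ge\dots\ge\la_r\ge0$.

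Next let $p:Z\to\Gr_d(V)$ be the projection; its restriction to $\widetilde S_r$ is $q$. Since $\eU_d^\perp$ is pulled back from $\Gr_d(V)$, the projection formula gives
\[
R^jp_*\bigl(\Sigma_\la\eU_r\T\Sigma_{\la^t}\eU_d^\perp\bigr)=H^j\bigl(\Gr_r(V_{n-d}),\Sigma_\la\eU_r\bigr)\T\Sigma_{\la^t}\eU_d^\perp .
\]
By \eqref{eq:dualbun}, $\Sigma_\la\eU_r\simeq\Sigma_{\la^*}\eU_r^*$. We then apply Borel--Weil--Bott \eqref{eq:BWB} on $\Gr_r(V_{n-d})$, with weight $(\la^*,0^{n-d-r})$ completed by zeros and $\rho=(n-d,\dots,1)$. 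The cohomology vanishes unless $\la^*+\rho$ has distinct entries. In that case it is concentrated in the single degree $j=\ell(\sigma)$, where it equals $\Sigma_{\sigma(\la^*+\rho)-\rho}V_{n-d}^*$. Some bookkeeping is needed to match the paper's conventions, namely the dual space $V_{n-d}^*$ and the notation $\la^*+\rho$ meaning the padded weight.

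Now run the standard spectral sequence argument (Weyman, Thm.~5.1.2). Write $\eK_\bullet$ for the Koszul complex, which resolves $\eO_{\widetilde S_r}$. The hypercohomology spectral sequence for $Rp_*\eK_\bullet$ has $E_1$ terms $R^jp_*\eK_i$, and it converges to $R^{j-i}p_*\eO_{\widetilde S_r}$. Since $q$ is a resolution of $S_r$, and $S_r$ is normal with rational singularities, we have $R^{>0}q_*\eO_{\widetilde S_r}=0$ and $q_*\eO_{\widetilde S_r}=\eO_{S_r}$. Each summand contributes in exactly one bidegree $(i,j)=(|\la|,\ell(\sigma))$. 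Collecting all terms with total degree $i-j=k$ gives $\eF_k$, and the general theorem produces a complex $\eF_\bullet$ whose only homology is $\eO_{S_r}$ in degree $0$. Two points need checking. First, $\eF_k=0$ for $k<0$, i.e. $\ell(\sigma)\le|\la|$. This holds because each inversion needed to sort $\la^*+\rho$ moves the negative entries $-\la_i$ past at most $\la_i$ positions, so $\ell(\sigma)\le\sum\la_i$. Second, $\eF_0$ contains only the $\la=0$ term, so $\eF_0=\eO_{\Gr_d(V)}$.

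The main obstacle is the length of the complex, namely that $\eF_k=0$ for $k>r^2$. By the Auslander--Buchsbaum formula this must hold, because $S_r$ is Cohen--Macaulay of codimension $r^2$. A direct check is as follows. A nonvanishing term forces the $r$ negative shifted entries to land among positions with distinct values, which forces $\la_i\ge r$ whenever $\la_i>0$ in the relevant range. One then computes that $|\la|-\ell(\sigma)$ equals the size of the Durfee-type square, which is at most $r^2$. I would first try the explicit Lascoux-type combinatorics: write $\la$ as an $r\times r$ square union two side parts and compute $\ell(\sigma)$ exactly. If this becomes messy, I would fall back on the Cohen--Macaulay property of Schubert varieties together with Weyman's result that $\eF_\bullet$ is then a minimal resolution of length equal to the codimension.
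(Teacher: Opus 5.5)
Your overall route is the paper's: push the Koszul resolution of $\eO_{\widetilde S_r}$ forward along $p$, split it by the Cauchy formula, apply Borel--Weil--Bott on $\Gr_r(V_{n-d})$, and use the rational singularities of $S_r$. Your checks that $\ell(\sigma)\le|\la|$ and $\eF_0=\eO$ are fine.

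The gap is the bound $|\la|-\ell(\sigma)\le r^2$, which is the only computation the paper's proof actually carries out. Your direct check rests on a false claim. Non-vanishing only says that each of the first $r$ entries $\mu_i$ of $\la^*+\rho$ avoids $\{1,\dots,n-d-r\}$. Equivalently, for every $j$ either $\la_j\ge n-d-r+j$ or $\la_j\le j-1$, so small nonzero parts do occur. For $r=2$ and $\la=(n-d-1,1)$, the entries of $\la^*+\rho$ are $n-d-1,0$ followed by $n-d-2,\dots,1$. These are all distinct, yet $\la_2=1<r$. Moreover $\ell(\sigma)=n-d-2$ and $|\la|-\ell(\sigma)=2$, which is not the area of any square, so the identification with a Durfee-type square is wrong too.

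The fallback does not close the gap either. Auslander--Buchsbaum bounds the projective dimension of $\eO_{S_r}$, not the length of a possibly non-minimal locally free resolution, so it cannot kill BWB terms with $k>r^2$. In Weyman's affine setting minimality is automatic: the Koszul section vanishes on the whole fibre over the vertex, so the restricted Koszul complex has zero differential. Here the section $U_r\to V/U_d$ vanishes on all of $p^{-1}(U_d)=\Gr_r(V_{n-d})\times\{U_d\}$ only if $V_{n-d}\subset U_d$. That is impossible unless $n=2d$ and $U_d=V_{n-d}$, so you cannot simply quote that theorem.

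What is needed is the paper's explicit count. Let $a$ be the number of nonpositive $\mu_i$. By the dichotomy above, $\la_j\ge n-d-r+j$ exactly for $j\le a$, and $\la_j\le j-1$ for $j>a$. Hence $\la_j\le\la_{a+1}\le a$ for all $j>a$. The $\mu_i$ are strictly decreasing, the positive ones are at least $n-d-r+1$, and the rest are at most $0$. So the inversions are exactly the pairs formed by a nonpositive $\mu_i$ and one of the tail entries $n-d-r,\dots,1$, giving $\ell(\sigma)=a(n-d-r)$. Therefore
\[
|\la|-\ell(\sigma)=\sum_{j\le a}\bigl(\la_j-(n-d-r)\bigr)+\sum_{j>a}\la_j\le ar+(r-a)a=r^2-(r-a)^2\le r^2,
\]
with equality only for $\la=((n-d)^r)$. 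In other words, $|\la|-\ell(\sigma)$ is the size of the whole partition $(\la_1-(n-d-r),\dots,\la_a-(n-d-r),\la_{a+1},\dots,\la_r)$. This partition lies in the $r\times r$ box and has Durfee square of side $a$; the quantity is not the size of that Durfee square.
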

\begin{proof}
We use  the Koszul resolution \eqref{eq:Kosres}, the decomposition \eqref{eq:Lambda} and the Borel-Weil-Bott formula \eqref{eq:BWB}.
Let us show that the maximal possible value for $|\la|-\ell(\sigma)$ is equal to $r^2$. By definition,
\begin{equation}\label{eq:mixedweight}
\la^*+\rho = (n-d-\la_r,\dots, n-d-r+1-\la_1,n-d-r,\dots,1).
\end{equation}
Let
\[
\mu_1 = n-d- \la_r,\  
\mu_2 = n-d-1-\la_{r-1},\dots, \mu_r = n-d-r+1-\la_1.
\] 
We have the following properties:
\begin{itemize}
\item $n-d\ge \mu_1> \dots >\mu_r\ge -r+1$,
\item $\mu_i\notin \{n-d,\dots,1\}$, $i=1,\dots,r$.	
\end{itemize}	
We conclude that there are a total of  $\binom{2r}{r}$ terms (tensor products) in all $\eF_k$.

Let $a$ be the number of non-positive $\mu_i$'s. Then for the  permutation 
$\sigma\in S_{n-d}$ such that $\sigma (\la^*+\rho)$ is non-increasing
one has $\ell(\sigma)=a(n-d-r)$.
Our goal is to show that the possible values of $N=|\la|-\ell(\sigma)$ are from zero to $r^2$. 
Note that if $a=r$, then $\mu_i=-i+1$, $\la_i=n-d$ for all $i$ and hence $N=r^2$.
Now if one passes from $a$ to $a-1$, then the value of $\ell(\sigma)$ becomes smaller by $n-d-r$. 

Let $M_a(\la)$ be the maximal possible value of $|\la|$ for a fixed $a$; in particular, $M_r(\la)=r(n-d)$.  
Also let $m_a(\mu)$ be the minimal possible value of $|\mu|$ for a fixed $a$; in particular, $M_r(\la)+m_r(\mu)$ does not depend on $a$.
One easily sees
that  $m_a(\mu)$ is attained at
\[
\mu = (n-d+r-a,\dots, n-d+1,-r+a,\dots,-r+1).
\] 
Hence, $m_{a-1}(\mu) = m_a(\mu) + n-d-r+2(r-a)+1$ and 
$$M_{a-1}(\mu) = m_a(\mu) - (n-d-r+2(r-a)+1).$$
So the maximal possible value of $N=|\la|-\ell(\sigma)$ becomes 
smaller by $2(r-a)+1$ when we pass from $a$ to $a-1$. Hence the maximal 
value is $r^2$ showing up for $a=r$.
\end{proof}

\begin{rem}
As shown in the proof above, there are totally $\binom{2r}{r}$ terms in all $\eF_k$, $k=0,\dots,r^2$. Each term is a tensor product of an
irreducible finite-dimensional representation of $\mgl(V_{n-d})$ and 
a sheaf on $\Gr_d(V)$ which is equal to a Schur functor applied to $\eU_d^\perp$.  
\end{rem}

\begin{example}\label{ex:Fr}
One has 
\begin{enumerate}
\item $\eF_0 = \eO$, the term corresponds to $\la=0$,
\item $\eF_1 = \Sigma_{(0^{r-1}(-1)^{n-d-r+1})} V_{n-d}^*\T\Sigma_{(1^{n-d-r+1}0^{r-1})} \eU_d^\perp$, the term corresponds to $\la=(n-d-r+1,0^{r-1})$,
\item $\eF_{r^2} = \eO(-r)$, the term corresponds to $\la=((n-d)^r)$.
\end{enumerate} 
\end{example}

Our next goal is to compute the sections of the sheaf $\eI_{S_r}(1)$. 
We first formulate the following direct consequence of Lemma \ref{lem:complex} 
(recall that $\eF_0\simeq~\eO$).
\begin{cor}\label{cor:I(1)}
The sheaf $\eI_{S_r} (1)$ admits the following resolution
\[
0\to \eF_{r^2}(1)\to \dots \to \eF_1(1)\to \eI_{S_r} (1)\to 0.
\]
\end{cor}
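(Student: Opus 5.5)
Corollary \ref{cor:I(1)} follows from Lemma \ref{lem:complex} by twisting with $\eO(1)$ and peeling off the $\eF_0$ term. The plan has three steps. First, obtain the exact sequence of sheaves
\[
0\to \eI_{S_r}\to \eO_{\Gr_d(V)}\to \eO_{S_r}\to 0 .
\]
Second, observe that in the complex of Lemma \ref{lem:complex} the last map $\eF_0\to\eO_{S_r}$ is, under the identification $\eF_0\simeq\eO$ from Example \ref{ex:Fr}, exactly the restriction $\eO_{\Gr_d(V)}\to\eO_{S_r}$. Third, deduce that the truncated complex $\eF_{r^2}\to\dots\to\eF_1$ resolves $\eI_{S_r}$, and tensor by the line bundle $\eO(1)$. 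Since $\eO(1)$ is locally free, tensoring preserves exactness, and this gives the claimed resolution of $\eI_{S_r}(1)$.

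The point that needs justification is that the complex in Lemma \ref{lem:complex} is actually exact, i.e.\ a resolution, and not merely a complex. I would argue this as follows.

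\emph{Origin of the complex.} The complex arises as $q_*$ of the Koszul resolution \eqref{eq:Kosres} of $\eI_{\widetilde S_r}$, extended by $\eO\to\eO_{\widetilde S_r}$, on the product $\Gr_r(V_{n-d})\times\Gr_d(V)$. The pushforward is along the projection to $\Gr_d(V)$, whose fibers are $\Gr_r(V_{n-d})$.

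\emph{Higher direct images via Bott.} By Borel--Weil--Bott \eqref{eq:BWB}, each summand $\Sigma_\la\eU_r\T\Sigma_{\la^t}\eU_d^\perp$ has nonzero derived pushforward in at most one degree $\ell(\sigma)$. Its value there is $\Sigma_{\sigma(\la^*+\rho)-\rho}V_{n-d}^*\T\Sigma_{\la^t}\eU_d^\perp$.

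\emph{Assembling the resolution.} The standard Kempf--Lascoux--Weyman geometric technique then assembles these pieces into a complex $\eF_\bullet$ whose homology computes $R q_*\eO_{\widetilde S_r}$. Because $S_r$ is a Schubert variety, it has rational singularities and $q$ is birational. Hence $R q_*\eO_{\widetilde S_r}=\eO_{S_r}$, with no higher direct images. Consequently $\eF_\bullet$ is acyclic in positive degrees, and its degree-zero homology is $\eO_{S_r}$.

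The main obstacle is the degree-zero identification. One must check that $\eF_0=\eO$ is the only term landing in degree zero, which happens only for $\la=0$, and that the augmentation is the natural restriction map. Granting this, the kernel of $\eO\to\eO_{S_r}$ is $\eI_{S_r}$, the image of $\eF_1$, and the twist by $\eO(1)$ completes the proof.
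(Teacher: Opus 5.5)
Your proposal is correct and follows the paper's own route. The corollary is read off from Lemma~\ref{lem:complex} (the $q_*$-pushforward of the Koszul resolution \eqref{eq:Kosres}, computed termwise by Borel--Weil--Bott and exact because $S_r$ has rational singularities) by discarding $\eF_0\simeq\eO$, identifying the augmentation with restriction so that its kernel is $\eI_{S_r}$, and twisting by the line bundle $\eO(1)$; your extra checks, that only $\la=0$ lands in degree zero and that exactness comes from $Rq_*\eO_{\widetilde S_r}=\eO_{S_r}$, just make explicit what the paper leaves implicit in calling this a ``direct consequence''.
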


Let us introduce the following notation
\[
\eT_\la = \Sigma_{\sigma(\la^*+\rho)-\rho} V_{n-d}^*\T\Sigma_{\la^t} \eU_d^\perp.
\]
In particular,
$\eF_k = \bigoplus_{\substack{n-d\ge\la_1\ge\dots\ge \la_r\ge 0\\ |\la|-\ell(\sigma)=k}} \eT_\la$.

\begin{lem}\label{lem:H^0}
$H^0(\eT_\la(1))\ne 0$ if and only if $\la=(n-d-m,0^{r-1})$ for $m=0,\dots,r-1$. The sheaf $\eT_{(n-d-m,0^{r-1})}$ is a summand of 
$\eF_{r-m}$. One has  
\[
H^0(\eT_{(n-d-m,0^{n-d-1})}(1))\simeq \Sigma_{(0^{r-1},(-1)^{n-d-r},m-r)}V^*_{n-d}\T \Lambda^m V.
\]
\end{lem}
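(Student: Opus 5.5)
The plan is to compute $H^0(\eT_\la(1))$ directly. The factor $\Sigma_{\sigma(\la^*+\rho)-\rho}V_{n-d}^*$ is a constant vector space, so
\[
H^0(\eT_\la(1))\simeq \Sigma_{\sigma(\la^*+\rho)-\rho}V_{n-d}^*\T H^0\bigl(\Gr_d(V),\Sigma_{\la^t}\eU_d^\perp(1)\bigr),
\]
and everything reduces to the Borel--Weil--Bott formula \eqref{eq:BWB} on $\Gr_d(V)$. Subsection \ref{subsec:Schurf} gives $\eO(1)\simeq\Sigma_{(-1)^{n-d}}\eU_d^\perp$, so $\Sigma_{\la^t}\eU_d^\perp(1)\simeq\Sigma_{\nu}\eU_d^\perp$ with $\nu=\la^t-(1^{n-d})$.

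\emph{Applying BWB.} Since $\la_1\le n-d$ and $\la$ has at most $r$ parts, $\la^t$ is a partition with $n-d$ entries, all lying in $[0,r]$. Hence $\nu$ is non-increasing with entries in $[-1,r-1]$.

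Put $\kappa=(0^d,\nu)$. The space $H^0$ is nonzero exactly when \eqref{eq:BWB} applies with $\ell(\sigma)=0$, that is, when $\kappa+\rho$ is already strictly decreasing. This is equivalent to $\kappa$ being non-increasing, which forces $\nu_1\le 0$. So all entries of $\la^t$ are at most $1$, and $\la=(k,0^{r-1})$ is a single row with $\la^t=(1^k,0^{n-d-k})$. In that case $\kappa=(0^{n-(n-d-k)},(-1)^{n-d-k})$ and
\[
H^0\simeq\Sigma_\kappa V^*\simeq\Lambda^{n-d-k}V.
\]
Writing $k=n-d-m$, this is $\Lambda^mV$.

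\emph{Which single rows occur.} A single row $\la$ contributes to the complex of Lemma \ref{lem:complex} only if $\la^*+\rho$ has distinct entries. Here
\[
\la^*+\rho=(n-d,\dots,n-d-r+2,\ n-d-r+1-k,\ n-d-r,\dots,1),
\]
whose entries are distinct iff $n-d-r+1-k\le 0$, i.e. $m\le r-1$. Together with $k\le n-d$, i.e. $m\ge 0$, this gives exactly $m=0,\dots,r-1$.

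For such $\la$, the sorting permutation $\sigma$ moves the entry $n-d-r+1-k$ past the $n-d-r$ entries $n-d-r,\dots,1$. So $\ell(\sigma)=n-d-r$, and the homological degree is $|\la|-\ell(\sigma)=(n-d-m)-(n-d-r)=r-m$. Thus $\eT_\la$ sits in $\eF_{r-m}$.

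\emph{The $V_{n-d}$ factor.} Subtracting $\rho$ from $\sigma(\la^*+\rho)$ gives:
\begin{itemize}
\item $0$ in the first $r-1$ slots;
\item $-1$ in the next $n-d-r$ slots, since the values $n-d-r,\dots,1$ now sit in positions where $\rho$ equals $n-d-r+1,\dots,2$;
\item $(n-d-r+1-k)-1=m-r$ in the last slot.
\end{itemize}
This yields $\Sigma_{(0^{r-1},(-1)^{n-d-r},m-r)}V_{n-d}^*\T\Lambda^mV$, as claimed. The index $0^{n-d-1}$ in the displayed statement should read $0^{r-1}$.

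The only delicate point is bookkeeping of conventions: the direction of dominance in \eqref{eq:BWB}, the identification $\Sigma_{(0^{d+k},(-1)^{n-d-k})}V^*\simeq\Lambda^{n-d-k}V$ for $\mgl(V)$, and the ordering of $\la^*$. Everything else is mechanical. I would double-check these conventions on the case $m=r-1$, $r=1$, which should recover the divisor $S_1$ with $H^0(\eI_{S_1}(1))$ one-dimensional up to the twist by $\Sigma V_{n-d}^*$.
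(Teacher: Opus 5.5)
Your proof follows the paper's own argument. You rewrite $\Sigma_{\la^t}\eU_d^\perp(1)=\Sigma_{\la^t-(1^{n-d})}\eU_d^\perp$, use \eqref{eq:BWB} to force $\la$ to be a single row, and then read off $\ell(\sigma)=n-d-r$, the index $r-m$ and $\sigma(\la^*+\rho)-\rho$. You are right that $0^{n-d-1}$ in the statement should be $0^{r-1}$, and you also get the rank right where the paper writes $0^{d-m}$ instead of $0^{n-d-m}$.

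One caveat, which the paper shares. The entries of $\la^*+\rho$ are also distinct for $k=0$. Then $\la=0$ gives $\eT_0=\eO\subset\eF_0$ with $H^0(\eO(1))=\Lambda^dV^*\neq 0$. So your ``iff $n-d-r+1-k\le 0$'' should read ``$k=0$ or $k\ge n-d-r+1$''. The lemma must therefore be understood for $\la\neq 0$, i.e.\ for the summands of $\eF_k$ with $k\ge 1$, which is all that Corollary \ref{cor:I(1)} uses.
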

\begin{proof}
By definition $H^0(\eT_\la(1))\ne 0$ if and only if $H^0(\Sigma_{\la^t} \eU_d^\perp(1))\ne 0$.  Recall that $\eO(1)=\Sigma_{(-1,\dots,-1)} \eU_d^\perp$. Hence 
\begin{equation}\label{eq:-1}
\Sigma_{\la^t} \eU_d^\perp(1) = \Sigma_{\la^t-(1,\dots,1)} \eU_d^\perp.
\end{equation}
By \eqref{eq:BWB} for a collection $\mu=(\mu_1\ge \dots\ge \mu_d)$ the space of sections of $\Sigma_\mu\eU^\perp$ is 
non-zero if and only if $\mu_1\le 0$. Hence the right hand side of \eqref{eq:-1} admits a non-trivial
section if and only if all the entries of $\la^t$ are at most one, so 
$\la$ has a single non-trivial part $\la_1$ (not exceeding $n-d$). 
The condition 
$n-d-r+1-\la_1\notin [n-d-r]$ implies $\la_1\ge n-d-r+1$.

The subscript $k$ of the sheaf $\eF_k$ containing the term  $\eT_{(n-d-m,0^{n-d-1})}$ is
computed as $k=|\la|-\ell(\sigma)=n-d-m-(n-d-r)=r-m$, which proves the second claim of the proposition. Finally, let us compute the corresponding spaces of sections.

For $\la=(n-d-m,0^{r-1})$ one has $\sigma(\la^*+\rho)-\rho=(0^{r-1},(-1)^{n-d-r},m-r)$.
Therefore
\[
\Sigma_{\sigma(\la^*+\rho)-\rho} V_{n-d}^* \simeq \Sigma_{(0^{r-1},(-1)^{n-d-r},m-r)}V^*_{n-d}.
\]
 Computing the sections of the corresponding sheaf one gets 
 \[
 H^0(\Sigma_{\la^t-(1,\dots,1)} \eU_d^\perp) = H^0(\Sigma_{0^{d-m},(-1)^m}\eU_d^\perp)
 \simeq \Lambda^m V. 
 \]	
\end{proof}

In Proposition \ref{prop:H0Gr} (see Appendix \ref{sec:app}) we use Lemma \ref{lem:H^0} 
to identify the dual
space of sections of $\eI_{S_r}(1)$ with the truncated wedge power $\Lambda^d(V)_r$.   
Recall the truncated Grassmannians
\[
X_r=\overline{\exp(\fa_d)[w_r]}\subset \bP(\Lambda^d(V)_r).
\] 

\begin{thm}\label{thm:main}
The blow-up  $\mathrm{Bl}_{r}$ admits a closed embedding into  $\Gr_d(V)\times X_{r-1}$. 
The image of the embedding is the closure of the $\exp(\fa_d)$ orbit of the product of  
$V_d$ and $[w_{r-1}]$.  
\end{thm}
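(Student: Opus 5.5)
The embedding will come from the standard description of a blow-up as a relative $\Proj$, combined with the computation of the sections of $\eI_{S_r}(1)$. Recall that $\Bl_r=\mathrm{Proj}_{\Gr_d(V)}\bigoplus_{k\ge0}\eI_{S_r}^k$. If $\eI_{S_r}(1)$ is globally generated, the surjection $H^0(\eI_{S_r}(1))\T\eO(-1)\twoheadrightarrow \eI_{S_r}$ gives a closed embedding $\Bl_r\subset \Gr_d(V)\times \bP(H^0(\eI_{S_r}(1))^*)$. This embedding is the closure of the graph of the rational map $\Gr_d(V)\dashrightarrow \bP(H^0(\eI_{S_r}(1))^*)$ defined by the linear system of sections vanishing on $S_r$. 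Note that we use that the Rees algebra is generated in degree one over $\eO$, which follows from the surjection. So the proof has three steps:
\begin{enumerate}
\item identify $H^0(\eI_{S_r}(1))$;
\item prove that $\eI_{S_r}(1)$ is globally generated;
\item identify the rational map with the projection to $X_{r-1}$.
\end{enumerate}

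\emph{Identifying the sections.} By Lemma \ref{lem:Demmod} the subspace $D=\mathrm{span}(v_I,\ \deg I\ge r)=\U(\fn_-)v_{I(r)}$ is the Demazure module, and $S_r=\Gr_d(V)\cap\bP(D)$. Hence the Plücker coordinates $\Delta_I$ with $\deg I\le r-1$ vanish on $S_r$. These coordinates span $((\Lambda^dV)_{r-1})^*=D^\perp\subset H^0(\eO(1))=\Lambda^dV^*$. Restricting $H^0(\eO(1))\to H^0(\eO_{S_r}(1))$ shows that $H^0(\eI_{S_r}(1))$ is exactly the annihilator of the span of the cone over $S_r$, and this span is $D$. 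Therefore
\[
H^0(\eI_{S_r}(1))\simeq ((\Lambda^dV)_{r-1})^*.
\]
The dimension check against Lemma \ref{lem:H^0} (summing $\Lambda^mV$-type contributions for $m=0,\dots,r-1$) is a consistency check; this is essentially Proposition \ref{prop:H0Gr}.

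\emph{Global generation.} This is the main obstacle. I would prove that the ideal $\eI_{S_r}$ is generated by the restrictions of the Plücker coordinates $\Delta_I$ with $\deg I\le r-1$. The check is local on the affine cells $A_J$. By Corollary \ref{cor:I(1)} the resolution has $\eF_1(1)\to\eI_{S_r}(1)$ surjective. Moreover $\eF_1=\eT_{(n-d-r+1,0^{r-1})}$ is a single term, with $\eF_1(1)$ corresponding to $m=r-1$ in Lemma \ref{lem:H^0}. I expect $\Sigma_{(1^{n-d-r+1})}\eU_d^\perp(1)=\Sigma_{(0^{n-d-r+1},(-1)^{r-1})}\eU_d^\perp$-type bundles to be globally generated, since they are homogeneous with dominant weight. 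Then $\eF_1(1)$ is globally generated, and hence so is its quotient $\eI_{S_r}(1)$. If this homogeneity argument fails, I would instead check locally. On $A_J$, with $m=|J_{>d}|$, the ideal of $A_J\cap S_r$ is generated by the $(m-r+1)$-minors of $X_\varphi$. Each such minor equals, up to sign and the unit $\Delta_J$, a Plücker coordinate $\Delta_I$ with $I$ obtained from $J$ by exchanging $m-r+1$ elements of $J_{>d}$ for elements of $[d]$. Such $I$ has $\deg I=r-1$.

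\emph{Identification of the image.} The rational map $\Gr_d(V)\dashrightarrow\bP((\Lambda^dV)_{r-1})$ is the restriction of the linear projection $\bP(\Lambda^dV)\dashrightarrow\bP((\Lambda^dV)_{r-1})$. This projection is $\exp(\fa_d)$-equivariant and sends $V_d=[v_{[d]}]$ to $[w_{r-1}]$. Hence the graph over the open orbit $\exp(\fa_d)V_d$ is the orbit of $V_d\times[w_{r-1}]$. Since $\Bl_r$ is irreducible and equals the closure of the graph over the dense open orbit, its image is $\overline{\exp(\fa_d)(V_d\times[w_{r-1}])}$. This orbit closure projects into $X_{r-1}$ by definition, so the embedding lands in $\Gr_d(V)\times X_{r-1}$.
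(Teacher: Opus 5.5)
Your proof is correct, and its skeleton matches the paper's. Both use the relative $\Proj$ description of $\Bl_r$, a surjection $H^0(\eI_{S_r}(1))\T\eO(-1)\twoheadrightarrow\eI_{S_r}$ giving the closed embedding, and equivariance on the open cell $\exp(\fa_d)V_d$ to identify the image. Your remark that the rational map is the restriction of the linear projection $\bP(\Lambda^dV)\dashrightarrow\bP((\Lambda^dV)_{r-1})$ is a cleaner form of the paper's computation on the cell.

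The difference is in how you obtain the key input, Proposition \ref{prop:H0Gr}. The paper derives both global generation and the identification of $H^0(\eI_{S_r}(1))$ from the resolution of Lemma \ref{lem:complex}. Generation comes from $\eF_1(1)\twoheadrightarrow\eI_{S_r}(1)$, which is also your first argument. The sections come from the exact sequence of $H^0$'s via Lemma \ref{lem:H^0}, a Pieri/Grothendieck-ring cancellation, and Lemma \ref{lem:surj}.

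You instead identify the sections directly as the annihilator of the linear span $D$ of the cone over $S_r$. This is the argument the paper itself uses in Section \ref{sec:general} for general $G/B$. Your chart-by-chart fallback writes the $(m-r+1)$-minors of $X_\varphi$ as $\pm\Delta_I/\Delta_J$ with $\deg I=r-1$. That proves $S_r=\Gr_d(V)\cap\bP(D)$ scheme-theoretically using only the primality of determinantal ideals, with no Koszul pushforward, rational singularities or Borel--Weil--Bott.

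Your route is shorter and lands directly on $(\Lambda^dV)^*_{r-1}$, the index the theorem actually uses (the appendix states $(\Lambda^dV)^*_r$). The paper's route produces the whole resolution $\eF_\bullet$ of $\eI_{S_r}$, which carries more than this theorem needs.

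A minor wording point: the Rees algebra is generated in degree one by definition. What the degree-one surjection gives is that $\mathrm{Sym}(H^0(\eI_{S_r}(1))\T\eO(-1))$ surjects onto $\bigoplus_k\eI_{S_r}^k$.
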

\begin{proof}
We want  to construct the $\exp(\fa_d)$ equivariant embedding 
$\mathrm{Bl}_r$ into $\Gr_d(V)\times \bP(\Lambda^dV)_{r-1}$. 
Let $\eI_r$ be the ideal sheaf of the subvariety $S_r$. 
We use the following standard construction involving the relative $\mathrm{Proj}$ construction.
Let 
\begin{equation}\label{eq:A}
A=\bigoplus_{k\ge 0} S^k(W^*\T\eO(-1)) = \bigoplus_{k\ge 0} S^k(W^*)\T\eO(-k).
\end{equation}
Then $\Gr_d(V)\times \bP(\Lambda^dV)_{r-1}\simeq \mathrm{Proj}_{\Gr_d(V)} A$, because	
$\bP(W)=\mathrm{Proj} \bigoplus_{k\ge 0} S^k(W^*)$ and twisting by $\eO(-k)$ we get the 
desired result.

Recall that by definition
\[
\mathrm{Bl}_r = 
\mathrm{Proj}_{\Gr_d(V)} \left(\eO\oplus \eI_{S_r} \oplus (\eI_{S_r})^2\oplus \dots\right).
\] 
Hence in order to obtain the desired embedding 
$\mathrm{Bl}_r\hookrightarrow  \Gr_d(V)\times \bP(\Lambda^dV)_{r-1}$ it suffices to
construct a surjective homomorphism of algebras
\begin{equation}\label{eq:mapproj}
	\bigoplus_{k\ge 0} S^k((\Lambda^dV)_{r-1}^*\T\eO(-1))\to \bigoplus_{k\ge 0} (\eI_{S_r})^k.
\end{equation}
Using Proposition \ref{prop:H0Gr} we obtain the surjective map 
$(\Lambda^dV)_{r-1}^*\T\eO(-1)\to \eI_{S_r}$, which implies the desired embedding. 

To finalize the proof we note that $\mathrm{Bl}_r$ is realized inside  
$\Gr_d(V)\times \bP(\Lambda^dV)_r$ and admits the action of the group $\exp(\fa_d)$
with an open dense orbit. In fact, the Grassmannian $\Gr_d(V)$ contains an open 
dense orbit $\exp(\fa_d)V_d$, which does not intersect with $\exp(\fa_d)$ invariant $S_r$. 
Therefore, the preimage 
of $\exp(\fa_d)V_d$ in the blow-up is an open dense $\exp(\fa_d)$ orbit, since the blow-up is
irreducible. Hence it suffices to prove that the preimage of $V_d$ in $\mathrm{Bl}_r$
is exactly $V_d\times [w_r]$.   

To this end we restrict the surjection \eqref{eq:mapproj} to the open cell 
$C=\exp(\fa_d)V_d\subset \Gr_d(V)$; in particular, $C$ is identified with $\fa_d$. 
Since $S_r$ does not intersect this cell, the right hand 
side is the direct sum  $\bigoplus_{k\ge 0} \eO(C)$ and the left hand side is given by
$\eO[C]\bigoplus_{k\ge 0} S^k(\Lambda^dV_r)^*$ (here $\eO(C)$ is just the algebras of functions on the 
affine space $C$). The surjection between these restrictions is $\eO(C)$ linear and hence 
determined by
the $\fa_d$-equivariant map $(\Lambda^dV_r)^*\to \eO(C)$ (coming from the $k=1$ part). 
The image of this map consists of functions of degree at most one with 
$(\Lambda^dV_r)^*\subset W^*$ mapping to the constants and $\Hom(V_{n-d},V_d)\subset (\Lambda^dV_r)^*$
mapping to the space of linear functions, i.e. $\xi$ is mapped to the function on $C$
taking value $\xi(w+c)$ for a point $c\in C$ (recall that $W\simeq \bK w\oplus\fa_d$). 
We conclude that the preimage of the point $V_d$
in the blow-up is mapped to itself times $[w_r]$ inside $\Gr_d(V)\times \bP(\Lambda^dV)_r$.  
\end{proof}

\section{Mixed case and collineations}\label{sec:coll}
In this section we introduce a family of pairwise birationally isomorphic $\bG_a^{d(n-d)}$ 
varieties which include the truncated Grassmannians $X_r$ and the blow-ups $\Bl_r$.
 In particular, the "largest" member of the family is smooth 
and serves as a desingularization for all other members. We first recall the construction 
of (partial) collineations \cite{L88,Th99,Vain84}.
 
\subsection{Collineations}\label{subsec:coll}
Let $E_1$ and $E_2$ be two vector spaces of dimension $d_1$ and $d_2$ (in what follows
we will only need  the case $d_1=d_2$). Let $\bs=(1 \le s_1< \dots < s_k \le \min(d_1,d_2))$
be a collection of integers. 
Let $\Hom^\circ(E_1,E_2)$ be the full rank homomorphisms from $E_1$ to $E_2$.
For  $\varphi\in\Hom^\circ(E_1,E_2)$
we consider the corresponding (non-zero) elements 
$\Lambda^{s_i}\varphi \in \Hom(\Lambda^{s_i} E_1,\Lambda^{s_i} E_2)$ (the coordinates
of $\Lambda^{s_i}$ are the minors of $\varphi$). We thus obtain the birational map
\[
F_\bs: \bP(\Hom(E_1,E_2)) \dashrightarrow \prod_{i=1}^k \bP(\Hom(\Lambda^{s_i} E_1,\Lambda^{s_i} E_2)).
\]
The space of partial collineations $\Col_\bs(E_1,E_2)$ is defined as the closure
of the image of $F_\bs$:
\[
\Col_\bs(E_1,E_2) = \overline{\{([\Lambda^{s_1}\varphi],\dots,[\Lambda^{s_k}\varphi]), \varphi\in\Hom^\circ(E_1,E_2)\}} 
\]
(as usual, for a vector $\varphi$ we denote by $[\varphi]$ the corresponding line in the
projective space).

\begin{example}
Here are several examples of the collineation varieties. 
\begin{enumerate}
\item 
If $k=1$, $s_1=1$, then  $\Col_\bs(E_1,E_2)=\bP(\Hom(E_1,E_2))$. 
\item If $k=1$, $s_1>1$, then
$\Col_\bs(E_1,E_2)$ is the projective spectrum of the algebra generated by the $s_1$ minors
of a $d_1\times d_2$ matrix \cite{BV88,BC01,dCEP80}.
\item If $k=1$, $d_1<d_2$ and $s_1=d_1$, then $\Col_\bs(E_1,E_2)\simeq \Gr_{d_1}(E_2)$.
 \item If $k=1$, $d_1=d_2$, $s_1=d_1-1$, then $\Col_\bs(E_1,E_2)\simeq \bP^{d_1^2-1}$, since 
 the $(d_1-1)\times (d_1-1)$ minors of a matrix are algebraically independent. 
\item If $k$ is arbitrary, $s_1=1$, then  there is a natural projection map
from $\Col_\bs(E_1,E_2)$ to $\bP(\Hom(E_1,E_2))$.
\item If $d_1=d_2$, $k=d_1-1$, $\bs=(1,\dots,d_1-1)$, then $\Col_\bs(E_1,E_2)$ is smooth 
and projects onto all other collineation varieties.
\end{enumerate}	
\end{example}

\subsection{Mixed truncated Grassmannians}
As above, we fix $d$ and $n$ such that $d\le n-d$ and consider
a collection  $\br=(1 < r_1<\dots<r_k\le d)$. 
Recall the chain of embedded Schubert varieties $\Gr_d(V)\supset S_2\supset \dots\supset S_d$.
We define the blow-up
$\mathrm{Bl}_\br \Gr_d(V)$ as follows. Let $\pi_r: \mathrm{Bl}_r\to \Gr_d(V)$ 
be the standard projection (as above, $\Bl_r = \mathrm{Bl}_{S_r}\Gr_d(V)$). 
 
\begin{dfn}\label{dfn:mixedbl}
The mixed blow-up  $\mathrm{Bl}_\br\subset \prod_{r\in\br} \mathrm{Bl}_r$
is the closure of the following set
\[
(x_1,\dots,x_k):\ \pi_{r_1}(x_1) = \dots = \pi_{r_k} (x_k),\ \pi_{r_i}(x_i) 
\notin \bigcup_{i=1}^k S_{r_i}. 
\]  
\end{dfn}
\begin{rem}
Since $r_1<\dots <r_k$, the union $\bigcup_{i=1}^k S_{r_i}$ is equal to $S_{r_1}$.
\end{rem}

The varieties $\mathrm{Bl}_\br$ admit the following description in terms of truncated 
Grassmannians $X_r\subset \bP(\Lambda^d(V)_r)$. 
Recall that $X_r$ 
is the closure of the $\exp(\fa_d)$ orbit through the point $[w_r]$ -- 
the image of the highest weight line
in $\bP(\Lambda^d(V))$. In particular, $X_d=\Gr_d(V)$, since $\Lambda^d(V)_d=\Lambda^d(V)$.

All the blow-ups $\Bl_\br$ admit the action of the parabolic
subgroup $P^-_d$.
\begin{thm}
The variety $\mathrm{Bl}_\br$ admits a  $P^-_d$ equivariant embedding
\begin{equation}\label{eq:remb}
\mathrm{Bl}_\br \hookrightarrow \Gr_d(V)\times X_{r_1-1}\times\dots\times X_{r_k-1};
\end{equation}
the image of the embedding is the orbit closure of the product 
of highest weight lines
\[
\mathrm{Bl}_\br\simeq \overline{\exp(\fa_d)(V_d\times [w_{r_1-1}]\times\dots\times [w_{r_k-1}])}.
\]
\end{thm}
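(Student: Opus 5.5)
The plan is to reduce the statement to Theorem \ref{thm:main}, applied separately to each $r_i$, together with the observation that a closure of a graph over a dense open set is compatible with products.

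\emph{Step 1: one factor.} For each $i$, Theorem \ref{thm:main} gives a closed $\exp(\fa_d)$-equivariant embedding $\iota_i:\Bl_{r_i}\hookrightarrow \Gr_d(V)\times X_{r_i-1}$. Its image is $\overline{\exp(\fa_d)(V_d\times[w_{r_i-1}])}$, and it commutes with the projections to $\Gr_d(V)$. The construction in its proof comes from the $P^-_d$-equivariant surjection $(\Lambda^dV)_{r_i-1}^*\T\eO(-1)\to\eI_{S_{r_i}}$ of Proposition \ref{prop:H0Gr}, and $S_{r_i}$ is $P^-_d$-invariant. So I would first note that $\iota_i$ is in fact $P^-_d$-equivariant; this requires only that the identification of sections of $\eI_{S_r}(1)$ be made as $\fp^-_d$-modules, which Lemma \ref{lem:H^0} provides.

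\emph{Step 2: products.} Consider the closed embedding
\[
\prod_{i}\Bl_{r_i}\hookrightarrow \prod_i\bigl(\Gr_d(V)\times X_{r_i-1}\bigr).
\]
The mixed blow-up of Definition \ref{dfn:mixedbl} lies over the diagonal of $\Gr_d(V)^k$. Intersecting with the preimage of the diagonal identifies $\Bl_\br$ with a closed subvariety of $\Gr_d(V)\times\prod_i X_{r_i-1}$, which gives the embedding \eqref{eq:remb}. This embedding is $P^-_d$-equivariant, because the defining open set $\{\pi_{r_i}(x_i)\text{ equal},\ \notin S_{r_1}\}$ is $P^-_d$-stable and so is its closure.

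\emph{Step 3: identify the image.} Over the open set $\Gr_d(V)\setminus S_{r_1}$ each $\pi_{r_i}$ is an isomorphism, since $S_{r_i}\subset S_{r_1}$. Hence the defining open set of $\Bl_\br$ is isomorphic to $\Gr_d(V)\setminus S_{r_1}$, and $\Bl_\br$ is irreducible of dimension $d(n-d)$. Inside it lies the preimage of the open cell $\exp(\fa_d)V_d$. By the last part of the proof of Theorem \ref{thm:main}, this preimage is the orbit $\exp(\fa_d)(V_d\times[w_{r_1-1}]\times\dots\times[w_{r_k-1}])$: the fiber over $V_d$ in each factor is the single point $[w_{r_i-1}]$, and equivariance spreads this over the cell. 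The orbit is open and dense in an irreducible closed variety, so its closure is all of $\Bl_\br$.

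The main obstacle is the density claim in Step 3. One must check that the closure in the product does not acquire extra components over $S_{r_1}$. This follows because $\Bl_\br$ is defined as the closure of the locus over the complement of $S_{r_1}$, which is irreducible and contains the orbit as a dense subset. A second point to verify carefully is the $P^-_d$-equivariance in Step 1, specifically that the Levi $SL(V_d)\times SL(V_{n-d})$ acts compatibly on $(\Lambda^dV)_{r-1}$ and on $H^0(\eI_{S_r}(1))^*$. I would settle this by noting that both are cyclic $\fp^-_d$-modules generated by the highest weight line, and the map sends $w_{r-1}$ to the corresponding functional.
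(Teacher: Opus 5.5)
Your proposal is correct and is essentially the paper's argument: the paper proves this in one line by appeal to Theorem \ref{thm:main}, and your Steps 2--3 (landing in the fiber product over $\Gr_d(V)$, irreducibility of the closure of the graph over $\Gr_d(V)\setminus S_{r_1}$, and density of the $\exp(\fa_d)$-orbit lying over the open cell) just make that reduction explicit. The one soft spot is your equivariance remark, since cyclicity alone does not make a given linear map equivariant; it is settled canonically by observing that $H^0(\eI_{S_r}(1))\subset(\Lambda^dV)^*$ is the annihilator of the $P^-_d$-stable span of the $v_I$ with $\deg I\ge r$, i.e.\ it equals $(\Lambda^dV)_{r-1}^*$ as a $P^-_d$-module (this is the content of Proposition \ref{prop:H0Gr}, not Lemma \ref{lem:H^0}).
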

\begin{proof}
By Theorem \ref{thm:main} one gets embedding \eqref{eq:remb} and the image coincides 
with the orbit closure. 
\end{proof}

\begin{cor}
All varieties $\Bl_\br$ share the same open part (the open $\exp(\fa_d)$ orbit), 
and hence are naturally birationally isomorphic.
\end{cor}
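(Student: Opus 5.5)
The plan is to deduce the corollary from the preceding theorem together with Theorem \ref{thm:main}. By the theorem, each $\Bl_\br$ is identified with the orbit closure
\[
\overline{\exp(\fa_d)\bigl(V_d\times [w_{r_1-1}]\times\dots\times [w_{r_k-1}]\bigr)}\subset \Gr_d(V)\times X_{r_1-1}\times\dots\times X_{r_k-1}.
\]
The first step is to show that the orbit $O_\br=\exp(\fa_d)\bigl(V_d\times[w_{r_1-1}]\times\dots\times[w_{r_k-1}]\bigr)$ is open and dense in $\Bl_\br$. Density is automatic, since $\Bl_\br$ is an orbit closure and hence irreducible. For openness I will use the standard fact that an orbit of a unipotent (indeed any algebraic) group action is locally closed, so it is open in its closure.

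The second step is to identify all these orbits with one another. The stabilizer of $V_d$ in $\exp(\fa_d)$ is trivial, because $\exp(a)V_d$ is the graph of $a\in\Hom(V_d,V_{n-d})$. Hence the stabilizer of the product point is also trivial, and $O_\br\simeq\exp(\fa_d)\simeq\bA^{d(n-d)}$. Moreover, the first projection $O_\br\to \exp(\fa_d)V_d$ is an $\exp(\fa_d)$-equivariant bijection of homogeneous spaces with trivial stabilizers, hence an isomorphism. Consequently, for any two collections $\br,\br'$, the composite
\[
O_\br\to\exp(\fa_d)\simeq O_{\br'},\qquad g\cdot(\text{base point})\mapsto g\cdot(\text{base point}')
\]
is an equivariant isomorphism of dense open subsets, which gives the birational isomorphism $\Bl_\br\dashrightarrow\Bl_{\br'}$.

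This isomorphism is compatible with the projections to $\Gr_d(V)$, so it is the natural one. The same argument applies to the truncated Grassmannians $X_r$, with $[w_r]$ as base point, and to $\Bl_r$ via Theorem \ref{thm:main}. There is no real obstacle here; the only point needing care is openness of the orbit, which I will settle either by the locally-closed-orbit fact or, more concretely, by noting that $O_\br=\pi^{-1}(\Gr_d(V)\setminus S_1)$. For the latter description, the proof of Theorem \ref{thm:main} shows that the fiber of each factor over a point of the open cell is a single point.
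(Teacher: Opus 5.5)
Your proposal is correct and follows the paper's route. The paper states this corollary without proof as an immediate consequence of the preceding theorem, which identifies $\Bl_\br$ with the closure of the $\exp(\fa_d)$-orbit through $V_d\times[w_{r_1-1}]\times\dots\times[w_{r_k-1}]$; you supply the details it leaves implicit, namely openness of the orbit and freeness of the action, using that $\exp(a)V_d$ is the graph of $a$.
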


The following proposition describes the fibers of the natural projection map
$\pi_\br: \Bl_\br\to\Gr_d(V)$.  
Let $U\in \Gr_d(V)$ satisfies  $\dim U\cap V_{n-d}=m$ and let $j$ be the index such that 
$r_j\le m < r_{j+1}$. 

We denote by $\mathrm{pr}$ the projection map $V\to V_d$
whose kernel is $V_{n-d}$.
\begin{prop}\label{prop:rfibers}
Let $U\in\Gr_d(V)$ and $\dim U\cap V_{n-d} = m$. Then $\pi_\br^{-1}U$	
is isomorphic to the space of partial collineations 
\[
\Col_{(m+1-r_j,\dots,m+1-r_1)}(U\cap V_{n-d},V_d/\mathrm{pr}(U)).
\]
\end{prop}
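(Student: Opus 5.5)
Since $\pi_\br$ is $P^-_d$-equivariant and $P^-_d$ acts transitively on the set of subspaces $U$ with fixed $m=\dim U\cap V_{n-d}$ (every such $U$ is $P^-_d$-conjugate to a torus fixed point $p_J$ with $|J_{>d}|=m$), it suffices to compute the fiber over $p_J$ with $J=I(m)$, i.e. $J=\{1,\dots,d-m,d+1,\dots,d+m\}$. The equivariance transports the identification $E_1=U\cap V_{n-d}$, $E_2=V_d/\mathrm{pr}(U)$ exactly as in the corollary after Proposition~\ref{prop:r-fibers}. I will therefore work in the affine chart $A_J\simeq\Hom(p_J,V/p_J)$ with coordinates $\varphi$ and the $m\times m$ block $X_\varphi\in\Hom(E_1,E_2)$, with $E_1,E_2$ as in \eqref{eq:E1E2}.

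The first step is to describe the restriction of each $\Bl_{r_i}$ over $A_J$ explicitly. The proof of Proposition~\ref{prop:r-fibers} shows that $\Bl_{r_i}$ over $A_J$ is the closure of $\{(\varphi,[\Lambda^{m+1-r_i}X_\varphi])\}$ inside $A_J\times\bP\Hom(\Lambda^{m+1-r_i}E_1,\Lambda^{m+1-r_i}E_2)$ for $r_i\le m$. For $r_i>m$ we have $A_J\cap S_{r_i}=\emptyset$, so these factors are isomorphisms over $A_J$ and contribute nothing to the fiber. This relies on the claim that, locally on $A_J$, the ideal of $S_{r_i}$ is generated by the $(m+1-r_i)$-minors of $X_\varphi$ (the determinantal ideal is prime, and these minors cut out $A_J\cap S_{r_i}$ by the lemma on $A_J\cap S_r$). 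By Definition~\ref{dfn:mixedbl}, $\pi_\br^{-1}(A_J)$ is then the closure of
\[
\{(\varphi,[\Lambda^{m+1-r_1}X_\varphi],\dots,[\Lambda^{m+1-r_j}X_\varphi]) : \det X_\varphi\neq 0\}
\]
in $A_J\times\prod_{i\le j}\bP\Hom(\Lambda^{m+1-r_i}E_1,\Lambda^{m+1-r_i}E_2)$. Here I use that $\varphi\notin S_{r_1}$ is an open dense condition, and that $\det X_\varphi\ne 0$ is equivalent to $U_\varphi\cap V_{n-d}=0$, which is even smaller but still dense.

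The second step computes the fiber over $\varphi=0$. Write $\varphi=(X,Y)$, where $Y$ collects the $d(n-d)-m^2$ remaining entries, on which none of the minors depend. The closure above is then the product of $\bA^{d(n-d)-m^2}$ with the closure $Z$ of $\{(X,[\Lambda^{s_i}X])_i\}$ in $\Hom(E_1,E_2)\times\prod_i\bP(\cdots)$, where $s_i=m+1-r_i$. $Z$ is invariant under the scaling $X\mapsto tX$, which fixes every projective coordinate. Hence the fiber of $Z$ over $X=0$ equals the set of limit points $\lim_{t\to0}$ and contains $\overline{\{([\Lambda^{s_i}X])_i: X \text{ invertible}\}}=\Col_{(s_j,\dots,s_1)}(E_1,E_2)$. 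Conversely, the projection of $Z$ to the product of projective spaces lands in this closure, because it is the closure of the image of invertible $X$. Therefore the fiber over $0$ is exactly $\Col_{(m+1-r_j,\dots,m+1-r_1)}(E_1,E_2)$.

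The main obstacle is the first step. One must justify that $\Bl_\br$, defined globally as a closure inside $\prod\Bl_{r_i}$, restricts over $A_J$ to the closure described above. One must also justify that the local blow-up of $A_J\cap S_{r_i}$ is given by the minors of $X_\varphi$ alone, i.e. that $\eI_{S_{r_i}}|_{A_J}$ equals the determinantal ideal rather than merely having the same radical. For this I would use the surjection $(\Lambda^dV)_{r_i-1}^*\T\eO(-1)\to\eI_{S_{r_i}}$ from the proof of Theorem~\ref{thm:main} and restrict it to $A_J$. Alternatively, I would cite reducedness of Schubert varieties together with primeness of determinantal ideals. Closure commutes with restriction to the open set $\pi_\br^{-1}(A_J)$, so once the ideal identification is in place the rest is formal.
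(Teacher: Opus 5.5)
Your proposal is correct and follows essentially the same route as the paper: work in the standard affine cell, discard the factors with $r_i>m$ (their centers miss the cell), describe each $\Bl_{r_i}$ locally as the graph closure of $\varphi\mapsto[\Lambda^{m+1-r_i}X_\varphi]$, and take the closure of the diagonal. You also supply details the paper leaves implicit: the $P^-_d$-reduction to a torus-fixed point, the scaling argument showing the fiber over $0$ is exactly the collineation variety rather than merely contained in it, and the justification via reducedness of $S_{r_i}$ and primeness of determinantal ideals that $\eI_{S_{r_i}}|_{A_J}$ is the ideal of $(m+1-r_i)$-minors.
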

\begin{proof}
By definition, the mixed blow-up $\Bl_\br$ sits inside the product of the Grassmannian 
$\Gr_d(V)$ and the blow-ups $\Bl_{r_i}$. For $U\in \Gr_d(V)$ as above, let $A_U\subset \Gr_d(V)$ 
be the standard open affine cell containing $U$, $A_U\simeq \Hom(U,V/U)$.   
Definition \ref{dfn:mixedbl} implies that $\pi_\br^{-1}(A_U)$ is the closure of the set 
of points of the form $(x,\pi_{r_1}^{-1}(x),\dots,\pi^{-1}_{r_m}(x))$ for 
$x\in A_U\setminus S_{r_1}$. We note that if $i>j$, i.e. $r_i>m$, then $A_U\cap S_{r_i} =\emptyset$
(since for any $U_1\in A_U$ one has $\dim U_1\cap V_{n-d}\le \dim U\cap V_{n-d}$).
We conclude that the fiber $\pi_\br^{-1}U$ sits inside the product of fibers 
$\pi_{\br_i}^{-1}U$ for $i=1,\dots,j$. Now 
Proposition \ref{prop:r-fibers} claims that each $\pi_{\br_i}^{-1}U$ is 
isomorphic $\Col_{(m+1-r_i)}U\cap V_{n-d},V_d/\mathrm{pr}(U)$ and this description emerges 
from the closure of the map $\varphi\mapsto \Lambda^{m+1-r_i}\varphi$ (for $\phi$ of high 
enough rank). Since $\Bl_\br$ is defined as the closure of the diagonally embedded 
$\Gr_d(V)\setminus S_{r_1}$, we arrive ate the desired statement.
\end{proof}

Now let us consider the maximal (complete) blow-up $\Bl_{(2,\dots,d)}$. By definition,
one gets the projection map $\Bl_{(2,\dots,d)}\to \Bl_\br$ for any collection $\br$.
The following proposition shows that this map is a desingularization.
To simplify the notation, we denote $\Bl_{(2,\dots,d)}$ by $\Bl_{\max}$,

\begin{prop}
The complete blow-up $\mathrm{Bl}_{\max}$ is smooth. 
\end{prop}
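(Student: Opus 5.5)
Since smoothness is local, I will prove that $\Bl_{\max}$ is smooth at every point of the fiber $\pi_{\max}^{-1}(U)$ over each $U\in\Gr_d(V)$. Let $m=\dim U\cap V_{n-d}$. The group $P^-_d$ acts on $\Bl_{\max}$ compatibly with $\pi_{\max}$. Its orbits on $\Gr_d(V)$ are the strata $S_m\setminus S_{m+1}$, and every such stratum contains one of the torus fixed points $p_J$ with $|J_{>d}|=m$. It therefore suffices to work over the standard affine cell $A_J\ni p_J$.

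\textbf{Step 1: local product structure.} As in the proof of Proposition \ref{prop:r-fibers}, write $\varphi\in A_J\simeq\Hom(p_J,V/p_J)$ and split off the $m\times m$ block $X_\varphi\in\Hom(E_1,E_2)$, with $E_1,E_2$ as in \eqref{eq:E1E2}. The intersection $A_J\cap S_{r}$ is cut out only by conditions on $X_\varphi$, namely $\mathrm{rk}X_\varphi\le m-r$. Moreover $A_J\cap S_r=\emptyset$ for $r>m$. Hence over $A_J$ only the blow-ups with $r\le m$ contribute, and, following the argument of Proposition \ref{prop:rfibers},
\[
\pi_{\max}^{-1}(A_J)\simeq \bA^{d(n-d)-m^2}\times \overline{\{(X,[\Lambda^{m-1}X],\dots,[\Lambda^{1}X]) : X\in\Hom^\circ(E_1,E_2)\}},
\]
where the closure is taken in $\Hom(E_1,E_2)\times\prod_{s=1}^{m-1}\bP\Hom(\Lambda^sE_1,\Lambda^sE_2)$. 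The exponents $m+1-r$ for $r=2,\dots,m$ run over all of $1,\dots,m-1$. The right-hand factor is the affine cone version of the complete collineations, i.e. the complete (Kausz/Vainsencher) collineation variety of $\mathrm{Mat}_{m\times m}$, obtained by blowing up the affine space of matrices successively along rank $\le 0,1,\dots,m-2$ loci. I will justify this equality of closures by noting that both sides are the closure of the same graph over the open set $\{\det X\ne0\}$; this is exactly the definition of $\Bl_\br$.

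\textbf{Step 2: smoothness of the local model.} I need to show that
\[
\widetilde M_m=\overline{\{(X,[\Lambda^1X],\dots,[\Lambda^{m-1}X])\}}\subset \mathrm{Mat}_m\times\prod_{s}\bP\Hom(\Lambda^sE_1,\Lambda^sE_2)
\]
is smooth. The first attempt is to recognize $\widetilde M_m$ as the sequence of blow-ups of $\mathrm{Mat}_m$ along the strict transforms of $Y_{m,0}\subset Y_{m,1}\subset\dots\subset Y_{m,m-2}$. By Vainsencher's theorem, each center is smooth at the stage when it is blown up, so the result is smooth. The identification uses the standard fact that the blow-up along $Y_{m,c}$ is the closure of the graph of $X\mapsto[\Lambda^{c+1}X]$, together with the fact that iterated blow-ups coincide with the joint graph closure. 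This is precisely the content of the cited smoothness of complete collineations \cite{Vain84,Th99,M20}; the only change is that we use the affine cone $\mathrm{Mat}_m$ in place of $\bP\Hom$. The cone differs only by adding the factor $\bA^1$ of scalars, since the projective complete collineations form the quotient of the complement of the zero-section preimage by $\bG_m$.

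As a cross-check, there are explicit charts. Write $X=g\,\mathrm{diag}(t_1,t_1t_2,\dots,t_1\cdots t_m)\,h$ with $g$ lower unipotent and $h$ upper unipotent, as in Kausz's or Thaddeus's description. These coordinates give affine charts isomorphic to $\bA^{m^2}$ covering $\widetilde M_m$, up to the $GL_m\times GL_m$ action.

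\textbf{Main obstacle.} The delicate point is Step 1: proving that the closure in the global product actually equals $\bA^{d(n-d)-m^2}\times\widetilde M_m$ scheme-theoretically. One must check that the Plücker-type coordinates of the truncated Grassmannians restricted to $A_J$ are, up to units, the minors of $X_\varphi$ times functions of the remaining entries. I would handle this with the chart description: in the chart, the ideals $\eI_{S_r}|_{A_J}$ are generated by the $(m-r+1)$-minors of $X_\varphi$, by the Lemma on $A_J\cap S_r$ together with the primeness of determinantal ideals. Consequently the multi-Rees algebra over $A_J$ is the base change of the one for $\mathrm{Mat}_m$.
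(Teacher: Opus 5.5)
Your proposal is correct and follows essentially the same route as the paper. Both arguments use $P^-_d$-equivariance to reduce to a standard affine cell and identify the preimage of that cell with an affine space times the closure of $X\mapsto([\Lambda^{s}X])_{s=1}^{m-1}$. This local model is the total space of the tautological line bundle over $\Col_{(1,\dots,m-1)}$, which is your cone remark, and smoothness then comes from that of the complete collineations. The differences are minor: the paper treats only cells around points of $S_d$, because the singular locus is closed and $P^-_d$-invariant and every $P^-_d$-orbit closure meets $S_d$, while you treat every stratum directly; and you justify smoothness of the local model through Vainsencher's iterated blow-ups rather than by directly noting that forgetting $X$ gives a line bundle over the complete collineations.
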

\begin{proof}
Consider the $P^-_d$ equivariant projection $\pi_{\max}:\Bl_{\max}\to\Gr_d(V)$. 
If $\Bl_{\max}$ is singular, then there exists a point $U$ such that $\dim U\cap V_{n-d}=d$
and a singularity shows up over $U$, i.e. in $\pi_{\max}^{-1}U$. In fact, for any point in
the Grassmannian the closure of its $P^-_d$ orbit intersects with the Schubert variety $S_d$
(note that $S_d=\{U\in\Gr_d(V):\ U\subset V_{n-d}\}$).

Now for a point $U\in S_d$ let us describe the preimage $\pi_{\max}^{-1}A_U$ of the standard
cell $A_U\subset \Gr_d(V)$ containing $U$. The cell $A_U$ is identified with the space of linear maps $\Hom(U,V/U)$. For $\psi\in \Hom(U,V/U)$ we denote by $\varphi$ the composition $\mathrm{pr}\circ \psi$;
in particular, $\varphi\in \Hom(U,V_d)$. Clearly, $\Hom(U,V/U)\simeq \Hom(U,V_d)\oplus 
\Hom(U,V_{n-d}/U)$, since $U\subset V_{n-d}$.
Now by Proposition \ref{prop:r-fibers} 
the preimage $\pi_{\max}^{-1}A_U$ sits inside 
$A_U\times \prod_{s=1}^{d-1} \bP(\Hom(\Lambda^sU,\Lambda^sV_d))$ as the closure
of the set of points $(\psi,[\varphi],[\Lambda^2\varphi],\dots,[\Lambda^{d-1}\varphi])$.
Let us consider the projection map 
$\pi_{\max}^{-1}A_U\to \Col_{(1,\dots,d-1)}(U,V_d)$, which forgets the first coordinate $\psi$. 
The fiber over 
a point $(p_1,\dots,p_{d-1})$, $p_s\in \bP(\Hom(\Lambda^sU,\Lambda^sV_d))$ is
isomorphic to $\Hom(U,V_{n-d}/U)\times \mathrm{span}(p_1)$. Hence $\pi_{\max}^{-1}A_U$ is 
a vector bundle  over a smooth base, so  $\pi_{\max}^{-1}A_U$ is smooth as well.
\end{proof}

\section{The general case}\label{sec:general}
In this section we describe the general picture for the blow-ups of flag varieties along Schubert
subvarieties. To simplify the exposition, we concentrate on the case of complete flag
varieties. One can similarly work out the case of Schubert varieties inside partial flags.
In particular, in the previous sections we considered the case of Grassmannians. 

Let $G$ be a simple simply-connected Lie group. We fix a Borel subgroup $B$ and the maximal 
torus $T\subset B$.
Let $F=G/B$ be the corresponding flag variety. Let $B_-\subset G$ be the opposite
Borel subgroup, which acts on $F$ with an open
dense orbit through the class of identity.
Let $W$ be the Weyl group of $G$; for $\sigma\in W$, let $p_\sigma\in F$ be the corresponding 
$T$ fixed point in the flag variety. We denote by $S^\circ(\sigma)\subset S(\sigma)\subset F$ 
the (opposite)
open and closed Schubert varieties (in particular, $\dim S(\sigma) = \dim F - \ell(\sigma)$, 
where $\ell(\sigma)$ is the length of $\sigma$). Hence, $S^\circ(\sigma) = {B_- p_\sigma}$, $S(\sigma)=\overline{S^\circ(\sigma)}$.  
Each open Schubert variety is an open affine cell and $F=\sqcup_{\sigma\in W} S^\circ(\sigma)$.
We are interested in the blow-ups $\Bl_{S(\sigma)} F$. 

Let $\fg$ be the Lie algebra of $G$ with the Cartan decomposition $\fg=\fn_-\oplus\fh\oplus\fn$
and let $\fb=\fn\oplus\fh$, $\fb_-=\fn_-\oplus\fh$ be the Borel subalgebras. 
For a dominant weight $\la\in\fh^*$,  
we denote by $L(\lambda)$ the irreducible highest 
weight $\fg$ module with a highest weight vector $v(\lambda)\in L(\la)$. In particular, 
$L(\la)=\U(\fn_-)v(\la)$ and $\fn. v(\la)=0$.
For an element $\sigma\in W$ we fix an extremal weight vector $v(\sigma\la)\in L(\la)$ of
weight $\sigma\la$ (the space of such vectors is one-dimensional for every $\sigma$). 
Let $D(\sigma\la)$ be the (opposite) Demazure module $D(\sigma\la) = \U(\fn_-)v(\sigma\la)$.

Now assume that $\la$ is regular, i.e. all fundamental weights show up as summands of $\la$. 
One has the following $G$-equivariant closed embeddings 
\begin{gather*}
	F=G/B \subset \bP(L(\la)),\ eB\mapsto [v(\la)],\qquad S(\sigma) \subset \bP(D(\sigma\la)),\ 
	p_\sigma\mapsto [v(\sigma\la)].
\end{gather*}
The following Lemma is standard (see e.g. \cite{BK05,Kum02}), but important for the following discussion.
\begin{lem}\label{lem:SchDem}
	One has $S(\sigma) = F\cap \bP(D(\sigma\la)) \subset \bP(L(\la))$.
\end{lem}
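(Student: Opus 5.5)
The plan is to prove the two inclusions separately. The inclusion $S(\sigma)\subset F\cap \bP(D(\sigma\la))$ is the easy direction. The reverse inclusion I would get from the Bruhat decomposition $F=\bigsqcup_\tau S^\circ(\tau)$ together with a weight argument.

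First, $D(\sigma\la)$ is $\fb_-$-stable. It is $\fn_-$-stable by definition, and $\fh$-stable because it is generated by a weight vector. Since $B_-$ is connected, $D(\sigma\la)$ is therefore $B_-$-stable. So $\bP(D(\sigma\la))$ is a closed $B_-$-stable subvariety containing $[v(\sigma\la)]$, which means it contains the orbit $B_-[v(\sigma\la)]=S^\circ(\sigma)$ and hence its closure $S(\sigma)$. This gives $S(\sigma)\subset F\cap\bP(D(\sigma\la))$.

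For the converse, the intersection $F\cap\bP(D(\sigma\la))$ is closed and $B_-$-stable, so it is a union of Schubert cells $S^\circ(\tau)$. It therefore suffices to show the following: if $p_\tau=[v(\tau\la)]$ lies in $\bP(D(\sigma\la))$, then $S(\tau)\subset S(\sigma)$. Recall $S(\tau)\subset S(\sigma)$ exactly when $\tau\ge\sigma$ in the Bruhat order for these opposite Schubert varieties. Because $\la$ is regular, the weights $\tau\la$ are pairwise distinct. So $v(\tau\la)\in D(\sigma\la)$ says that the weight $\tau\la$ occurs in the Demazure module $\U(\fn_-)v(\sigma\la)$.

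The key step is to show that $\tau\la$ occurring in $D(\sigma\la)$ forces $\tau\ge\sigma$. I would use the standard characterization of extremal weights in Demazure modules: $v(\tau\la)\in \U(\fn_-)v(\sigma\la)$ if and only if $\tau\ge\sigma$. This can be proved by induction on length via $\msl_2$-strings. If $\tau=s_\beta\sigma$ with $\tau>\sigma$ and $\langle\sigma\la,\beta^\vee\rangle>0$, then $f_\beta^{\,N}v(\sigma\la)$ is a nonzero multiple of $v(\tau\la)$, which gives the "if" direction. For the "only if" direction, I would reduce to the opposite version of the Demazure character formula, or to the known statement that the $T$-fixed points of $\bP(D(\sigma\la))\cap F$ are exactly the $p_\tau$ with $\tau\ge\sigma$. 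Equivalently, I would invoke the classical fact that the extremal weights of $D(\sigma\la)$ are $\{\tau\la:\tau\ge\sigma\}$, as in \cite{Kum02}.

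This weight-theoretic "only if" step is the main obstacle. Once it is established, every cell $S^\circ(\tau)$ contained in the intersection lies in $S(\sigma)$, and the equality $S(\sigma)=F\cap\bP(D(\sigma\la))$ follows set-theoretically. If scheme-theoretic equality is also needed, I would cite the reducedness of this intersection (the standard-monomial-theory result in \cite{BK05}) rather than reprove it.
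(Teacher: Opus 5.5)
Your proof is correct. It also says more than the paper does: the paper gives no argument for this lemma, only calling it standard and citing \cite{BK05,Kum02}.

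Your first inclusion, via $B_-$-stability of $D(\sigma\la)$, is sound. So is your reduction of the reverse inclusion, through the $B_-$-orbit decomposition of $F$, to the implication ``$v(\tau\la)\in D(\sigma\la)\Rightarrow\tau\ge\sigma$''. What the paper's citation presumably covers in addition is the scheme-theoretic statement, namely Ramanathan's theorem that Schubert varieties are cut out by linear equations. That is the version actually used afterwards. In the theorem of Section~\ref{sec:general}, the strict transform of $F$ in $\Bl_{\bP(D(\sigma\la))}\bP(L(\la))$ is the blow-up of $F$ along the scheme-theoretic intersection $F\cap\bP(D(\sigma\la))$. So your closing remark about reducedness is needed there, not optional.

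Two smaller points. First, the $\msl_2$-string argument for the ``if'' direction is superfluous, because your first inclusion already covers it. Second, for the ``only if'' direction, citing the description of the $T$-fixed points of $F\cap\bP(D(\sigma\la))$ would be circular: after your reduction, that description is the lemma itself. You do not need the Demazure character formula either; a short induction on $\ell(w_0)-\ell(\sigma)$ works.
\begin{itemize}
\item The base case $\sigma=w_0$ is trivial.
\item Otherwise pick a simple root $\al$ with $s_\al\sigma>\sigma$. Then $v(\sigma\la)$ is a nonzero multiple of $e_\al^N v(s_\al\sigma\la)$, where $N=\langle\sigma\la,\al^\vee\rangle$. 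Hence $D(\sigma\la)\subset M:=\U(\fp^-_\al)D(s_\al\sigma\la)=\sum_k e_\al^k D(s_\al\sigma\la)$, with $\fp^-_\al=\fb_-\oplus\bC e_\al$.
\item Since $M$ is $\msl_2^{(\al)}$-stable, if $\tau\la$ is a weight of $M$ then so is $s_\al\tau\la$. Let $\nu$ be whichever of the two satisfies $\langle\nu,\al^\vee\rangle\le 0$.
\item No $\nu-k\al$ with $k>0$ is a weight of $L(\la)$. Therefore $M_\nu=D(s_\al\sigma\la)_\nu\neq 0$.
\item By induction, $\nu=u\la$ with $u\ge s_\al\sigma$. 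Then $\tau\in\{u,s_\al u\}$, and both are $\ge\sigma$ by the lifting property of the Bruhat order.
\end{itemize}
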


We introduce the following notation for the truncated representations
\begin{equation}
	L_\sigma(\la) = L(\la)/ D(\sigma\la).
\end{equation}
We denote the image of the highest weight vector $v(\la)$ in $L_\sigma(\la)$  by $v_\sigma(\la)$. 

Let $\eL(\la)$ be the line bundle on $F$ obtained as the pull 
back of $\eO(1)$ with respect to the embedding $F\subset \bP(L(\la))$.
We use the same notation $\eL(\la)$ for the restriction of $\eL(\lambda)$ to $S(\sigma)$.
Also let $\eI_\sigma$ be the ideal sheaf (on $F$) of $S(\sigma)$. 

\begin{lem}
	$L_\sigma(\la)$ is a cyclic $\fb_-$ module with the cyclic vector $v_\sigma(\la)$.
	One has the isomorphism $L_\sigma(\la)^*\simeq H^0(F,\eI_\sigma\T\eL(\lambda))$.	
\end{lem}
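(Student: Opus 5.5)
Cyclicity is formal: $L(\la)=\U(\fn_-)v(\la)$, and since $\fh$ acts on $v(\la)$ by scalars, $L(\la)=\U(\fb_-)v(\la)$. The Demazure module $D(\sigma\la)=\U(\fn_-)v(\sigma\la)$ is $\fb_-$-stable, because $\fn_-$ preserves it and $\fh$ acts diagonally on it, being spanned by weight vectors. So the quotient $L_\sigma(\la)=L(\la)/D(\sigma\la)$ is a $\fb_-$-module generated by the image $v_\sigma(\la)$ of $v(\la)$.

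For the isomorphism I would begin with the short exact sequence
\[
0\to \eI_\sigma\T\eL(\la)\to \eL(\la)\to \eL(\la)|_{S(\sigma)}\to 0
\]
and take global sections. By Borel--Weil, $H^0(F,\eL(\la))\simeq L(\la)^*$, where a functional $\xi\in L(\la)^*$ is viewed as a section of $\eO(1)$ restricted to $F\subset\bP(L(\la))$. The standard results on Schubert varieties (Demazure character formula and projective normality; see \cite{BK05,Kum02}) give $H^0(S(\sigma),\eL(\la))\simeq D(\sigma\la)^*$, together with the surjectivity of the restriction map $H^0(F,\eL(\la))\to H^0(S(\sigma),\eL(\la))$, which under these identifications is the dual of the inclusion $D(\sigma\la)\hookrightarrow L(\la)$. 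Alternatively, surjectivity follows from the vanishing $H^1(F,\eI_\sigma\T\eL(\la))=0$, which is again a standard consequence of Frobenius splitting compatible with Schubert varieties. Taking the kernel, we get
\[
H^0(F,\eI_\sigma\T\eL(\la))\simeq \ker\bigl(L(\la)^*\to D(\sigma\la)^*\bigr)=D(\sigma\la)^\perp\simeq \bigl(L(\la)/D(\sigma\la)\bigr)^*=L_\sigma(\la)^*.
\]

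Concretely, the statement is that a linear functional $\xi$ on $L(\la)$ vanishes on $S(\sigma)$ if and only if it vanishes on $D(\sigma\la)$. One direction follows from Lemma \ref{lem:SchDem}, since $S(\sigma)\subset\bP(D(\sigma\la))$. For the converse, note that $S(\sigma)$ spans $\bP(D(\sigma\la))$ linearly: the linear span of the cone over $\overline{B_-[v(\sigma\la)]}$ is $B_-$-stable and contains $v(\sigma\la)$, hence contains $\U(\fb_-)v(\sigma\la)=D(\sigma\la)$. This elementary argument gives the kernel identification directly, without any cohomology vanishing.

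The main obstacle is the identification of $H^0(F,\eI_\sigma\T\eL(\la))$ with exactly the kernel of restriction on $H^0$. This holds by left exactness of $H^0$, but one must make sure $\eI_\sigma$ is the reduced ideal sheaf, which is fine since Schubert varieties are reduced and $S(\sigma)$ is scheme-theoretically cut out in $F$ by linear forms vanishing on $D(\sigma\la)$, as in Lemma \ref{lem:SchDem}. Surjectivity is only needed to identify the cokernel, not the kernel, so it is not essential. Finally, the isomorphism is $\fb_-$-equivariant because every map above is $B_-$-equivariant.
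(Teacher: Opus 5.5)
Your proposal is correct and follows the paper's proof: cyclicity comes from the $\fb_-$-equivariant surjection $L(\la)\to L_\sigma(\la)$. The isomorphism comes from identifying $H^0(F,\eI_\sigma\T\eL(\la))$ with the kernel of the restriction $L(\la)^*\simeq H^0(F,\eL(\la))\to H^0(S(\sigma),\eL(\la))\simeq D(\sigma\la)^*$. Your extra linear-span argument is a worthwhile sharpening: beyond Borel--Weil on $F$ it needs only that the cone over $S(\sigma)$ spans $D(\sigma\la)$, so neither the full Demazure isomorphism nor surjectivity of restriction is required. Your remark on scheme-theoretic equations is likewise unnecessary, since $\eI_\sigma$ is by definition the reduced ideal sheaf.
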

\begin{proof}
	The first claim follows from the surjection of $\fb_-$ modules $L(\la)\to L_\sigma(\la)$,
	sending $v(\la)$ to $v_\sigma(\la)$. This implies that $L_\sigma(\la) = \U(\fb_-)v_\sigma(\la)$.	
	
	To prove the second claim, we recall the isomorphisms
	\[
	L(\la)^*\simeq H^0(F,\eL(\la)),\ D(\sigma\la)^*\simeq H^0(S(\sigma),\eL(\la)). 
	\]
	Hence $L_\sigma(\la)^*$ is realized as kernel of the restriction map
	$H^0(F,\eL(\la))\to  H^0(S(\sigma),\eL(\la))$. By definition, the sections of 
	$\eI_\sigma\T\eL(\lambda)$ are exactly the sections of $\eL(\la)$ which vanish on 
	the Schubert variety $S(\sigma)$. Hence, we obtain the second claim.
\end{proof}

\begin{cor}
	The blow-up of $\bP(L(\la))$ along $\bP(D(\sigma\la))$ is isomorphic to the product
	$\bP(L(\la))\times \bP(L_\sigma(\la))$. The blow-up consists of pairs of lines $(\ell_1,\ell_2)$
	such that $\ell_1$ projects to $\ell_2$ under the projection map 
	$L(\la)\to L_\sigma(\la)$.
\end{cor}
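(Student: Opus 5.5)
The statement should be read as describing the blow-up as a closed subvariety of the product $\bP(L(\la))\times \bP(L_\sigma(\la))$, namely the incidence variety in its second sentence. The blow-up itself is not the whole product. Write $p:L(\la)\to L_\sigma(\la)$ for the projection, whose kernel is $D(\sigma\la)$. Set
\[
Z=\{(\ell_1,\ell_2)\in \bP(L(\la))\times\bP(L_\sigma(\la)):\ p(\ell_1)\subset \ell_2\}.
\]
When $\ell_1\not\subset D(\sigma\la)$ the condition forces $\ell_2=p(\ell_1)$; when $\ell_1\subset D(\sigma\la)$, $\ell_2$ is arbitrary. The plan is to show two things: that $\Bl_{\bP(D(\sigma\la))}\bP(L(\la))$ embeds into the product, and that its image is exactly $Z$. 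I will use the same Rees-algebra mechanism as in the proof of Theorem \ref{thm:main}.

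\emph{Step 1 (embedding).} The ideal sheaf $\eI$ of the linear subspace $\bP(D(\sigma\la))$ is generated by the linear forms vanishing on $D(\sigma\la)$. These forms are $D(\sigma\la)^\perp = L_\sigma(\la)^*\subset L(\la)^*$. This gives a surjection $L_\sigma(\la)^*\T\eO(-1)\to \eI$. It induces a surjection of graded algebras
\[
\bigoplus_{k\ge0} S^k(L_\sigma(\la)^*)\T\eO(-k)\to \bigoplus_{k\ge 0}\eI^k .
\]
Applying $\mathrm{Proj}$ over $\bP(L(\la))$ yields a closed embedding of the blow-up into $\bP(L(\la))\times\bP(L_\sigma(\la))$, exactly as in the construction around \eqref{eq:A}. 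Over the complement of $\bP(D(\sigma\la))$, the embedding sends $\ell_1$ to $(\ell_1,p(\ell_1))$. This is the graph of the rational linear projection.

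\emph{Step 2 (image contained in $Z$).} $Z$ is closed, since it is cut out by the $2\times 2$ minors $x_iy_j-x_jy_i$, where the $x_i$ are the forms in $L_\sigma(\la)^*$ on the first factor and the $y_i$ are the corresponding coordinates on the second. The graph of the projection lies in $Z$, and this graph is dense in the irreducible blow-up. Hence the image of the blow-up lies in $Z$.

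\emph{Step 3 (equality).} I project $Z$ to the second factor. The fiber over $\ell_2$ is $\bP(p^{-1}(\ell_2))=\bP(D(\sigma\la)\oplus\ell_2)$, a projective space of constant dimension $\dim D(\sigma\la)$. Thus $Z$ is a projective bundle over $\bP(L_\sigma(\la))$. In particular $Z$ is smooth and irreducible, and
\[
\dim Z=\dim\bP(L_\sigma(\la))+\dim D(\sigma\la)=\dim \bP(L(\la)).
\]
The blow-up is an irreducible closed subvariety of $Z$ of the same dimension, hence equal to $Z$ as varieties. Since $Z$ is reduced and the blow-up is integral, this gives an isomorphism of schemes, and we recover the description by pairs of lines.

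The only delicate point is the scheme-theoretic equality in Step 3. Comparing dimensions handles it once $Z$ is known to be reduced and irreducible, and the projective-bundle description gives both. Alternatively, one can note that $\eI$ is generated by a regular sequence of linear forms, so it is of linear type. Then the Rees algebra is the symmetric algebra modulo the minors above, which identifies the blow-up with $Z$ directly.
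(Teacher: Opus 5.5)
Your proof is correct and takes the same route as the paper, which simply cites the standard description of the blow-up of a projective space along a linear subspace: it is the incidence variety, equivalently the closure of the graph of the linear projection, inside $\bP(L(\la))\times\bP(L_\sigma(\la))$. You fill in that cited fact using the Rees-algebra embedding from the proof of Theorem \ref{thm:main} and a dimension count on the projective bundle $Z$, and you are right to read the first sentence as a closed embedding into the product rather than an isomorphism with it, which is how the paper uses it in the theorem that follows.
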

\begin{proof}
	We use the standard construction of the blow-up of a projective space along a subspace 
	(see e.g. \cite{Vak25}) and definition of $L_\sigma(\la)$.
\end{proof}

In order to describe the restriction of the above blow-up,  we define the truncated 
flag variety
\begin{equation}\label{eq:trflag}
	X_\sigma(\lambda) = \overline{B_- v_\sigma(\la)} \subset \bP(L_\sigma(\la)).
\end{equation}
We define the variety $F_\sigma(\la)$ inside the product $F\times X_\sigma(\la)$ 
of the flag variety and the truncated flag variety by 
\[
F_\sigma(\la) = \overline{B_- \bigl([v(\la)]\times [v_\sigma(\la)]}\bigr)
\subset F \times X_\sigma(\la) \subset \bP(L(\la))\times \bP(L_\sigma(\la)). 
\]
By definition, there is a canonical projection $F_\sigma(\la)\to F = G/B$. 

\begin{thm}
	The blow-up of $F$ along $S(\sigma)$ is embedded into $\bP(L(\la))\times \bP(L_\sigma(\la))$
	as a strict transform of $F\subset \bP(L(\la))$.  The image of the embedding is equal
	to $F_\sigma(\la)$. 
\end{thm}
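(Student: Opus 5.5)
The plan is to use the functoriality of blow-ups under closed embeddings, together with the Corollary identifying $\Bl_{\bP(D(\sigma\la))}\bP(L(\la))$ with the incidence subvariety of $\bP(L(\la))\times\bP(L_\sigma(\la))$.

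\emph{Step 1.} Let $Z=\bP(D(\sigma\la))\subset\bP(L(\la))$. By Lemma \ref{lem:SchDem}, $Z\cap F=S(\sigma)$ scheme-theoretically. That is, the ideal sheaf of $Z$ restricted to $F$, i.e.\ the image of $\eI_Z\T\eO_F\to\eO_F$, equals $\eI_\sigma$. I would justify this scheme-theoretic statement via projective normality. The restriction $H^0(\bP(L(\la)),\eO(k))\to H^0(F,\eL(k\la))$ is surjective. Moreover, $S(\sigma)\subset F$ is cut out by the linear sections of $\eL(\la)$ vanishing on it, namely $L_\sigma(\la)^*$ by the preceding Lemma. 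This is the standard fact that Schubert varieties are defined by linear equations in Pl\"ucker-type coordinates (Ramanathan / standard monomial theory; see \cite{BK05,Kum02}). It yields a surjection $L_\sigma(\la)^*\T\eL(-\la)\twoheadrightarrow\eI_\sigma$.

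\emph{Step 2.} The general fact is that for a closed subscheme $F\subset P$ and a center $Z$, the strict transform of $F$ in $\Bl_Z P$ is $\Bl_{Z\cap F}F$, where $Z\cap F$ has ideal $\eI_Z\cdot\eO_F$. Concretely, the surjection from Step 1 induces a surjection of Rees algebras
\[
\bigoplus_k S^k(L_\sigma(\la)^*)\T\eL(-k\la)\twoheadrightarrow\bigoplus_k\eI_\sigma^k,
\]
exactly as in the proof of Theorem \ref{thm:main}. Taking $\mathrm{Proj}_F$ gives a closed embedding $\Bl_{S(\sigma)}F\hookrightarrow F\times\bP(L_\sigma(\la))\subset\bP(L(\la))\times\bP(L_\sigma(\la))$. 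Its image is the closure of the graph of the projection $L(\la)\dashrightarrow L_\sigma(\la)$ restricted to $F\setminus S(\sigma)$, which is the strict transform of $F$.

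\emph{Step 3.} It remains to identify the image with $F_\sigma(\la)$. Everything is $B_-$-equivariant, since $D(\sigma\la)$ is $\fb_-$-stable and hence $S(\sigma)$ is $B_-$-stable. The open cell $B_-[v(\la)]$ is dense in $F$ and avoids $S(\sigma)$ when $\sigma\ne e$; the case $\sigma=e$ is degenerate. Over $F\setminus S(\sigma)$ the blow-up is an isomorphism and the point $[v(\la)]$ maps to $[v(\la)]\times[v_\sigma(\la)]$, because $v(\la)\notin D(\sigma\la)$. The blow-up is irreducible, so it equals the closure of the $B_-$-orbit of this point, which is $F_\sigma(\la)$. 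This also shows that the second projection lands in $X_\sigma(\la)$.

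The main obstacle is Step 1: showing that $\eI_\sigma$ is generated by degree-one sections, i.e.\ that $Z\cap F$ is reduced and equal to $S(\sigma)$ as a scheme rather than just set-theoretically. I would rely on the known result that the ideal of $S(\sigma)$ in the homogeneous coordinate ring $\bigoplus_k L(k\la)^*$ is generated by its degree-one part $L_\sigma(\la)^*$, which follows from standard monomial theory / Frobenius splitting \cite{BK05}. Sheafifying this statement gives the required surjection.
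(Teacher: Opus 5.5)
Your proposal is correct and follows essentially the same route as the paper. Both arguments realize $\Bl_{S(\sigma)}F$ as the strict transform of $F$ in the blow-up of $\bP(L(\la))$ along $\bP(D(\sigma\la))$ via Lemma \ref{lem:SchDem}, and then identify it with $F_\sigma(\la)$ using irreducibility and the open $B_-$-orbit that avoids $S(\sigma)$. Beyond the paper, your Step 1 spells out the scheme-theoretic content that the paper's proof takes from Lemma \ref{lem:SchDem} without comment: that $\eI_\sigma$ is generated by $L_\sigma(\la)^*$, which rests on Schubert varieties being linearly defined. Your Rees-algebra embedding mirrors the proof of Theorem \ref{thm:main}.
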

\begin{proof}
	By Lemma \ref{lem:SchDem} we know that the intersection of $F$ with $\bP(D(\sigma\la))$
	is $S(\sigma)$. Hence $\Bl_{S(\sigma)}F$ is the closure of the preimage of 
	$F\setminus S(\sigma)$ inside $\bP(L(\la))\times \bP(L_\sigma(\la))$ under the natural
	projection map from $\Bl_{\bP(D(\sigma\la))} \bP(L(\la))$ to $\bP(L(\la))$.  	
	
	Now let us prove the second claim. The orbit $B_- \bigl( [v(\la)]\times [v_\sigma(\la)]\bigr)$ 
	is contained in the blow-up of $\bP(L(\la))$ along  $\bP(D(\sigma\la))$. Since the $B_-$
	orbit of $[v(\la)]$ is open in $F$ and does not intersect the Schubert variety $S(\sigma)$, 
	the orbit $B_- \bigl([v(\la)]\times [v_\sigma(\la)]\bigr)$	is contained in $\Bl_{S(\sigma)}F$.
	Since the blow-up is irreducible, we conclude that $F_\sigma(\la)$  coincides with
	the blow-up of $F$ along $S(\sigma)$.
\end{proof}

It is natural to generalize the construction above to the case of multiple permutations.
Namely, for a collections $\underline\sigma=(\sigma_1,\dots,\sigma_m)\in W^m$
we define the orbit closure
\[
F_{\underline\sigma}(\la) = \overline{B_- \times_{i=1}^m [v_{\sigma_i}(\la)]} \subset 
\prod_{i=1}^m X_{\sigma_i}(\la). 
\]
It is tempting to conjecture that if $\underline\sigma$ exhausts the set of all Weyl group elements, 
then $F_{\underline\sigma}(\la)$ is smooth.

\begin{rem}
One can similarly consider the case of a partial flag variety $F=G/P$ for a standard 
parabolic subgroup $P$. The difference is that the weights $\lambda$ providing the
embedding $F\subset \bP(L(\la))$ do not have to be regular (only $P$-regular) and
there are less Schubert varieties (since part of the Weyl group stabilizes the 
point corresponding to the identity). 
\end{rem}

Finally, let us establish a correspondence between the objects studied in the previous sections
for the Grassmannians and the general objects introduced above. The relevant flag variety
$F$ is $SL_n/P_d$, $SL_n=SL(V)$, for a maximal parabolic $P_d$. We fix $\la=\om_d$; then
 $L(\la)=\Lambda^d(V)$  and the corresponding projective
embedding $\Gr_d(V)\subset \bP(L(\lambda))$ is the classical Pl\"ucker embedding.
For $r=1,\dots,d$ we fix permutation $\sigma_r$ such that
$\sigma_r([d])=\{1,\dots,d-r,d+1,\dots,d+r\} = I(r)$ (see \eqref{eq:I(r)}).
Then the Demazure module $D(\sigma_r\om_d)\subset \Lambda^d V$ is the span of wedge products $v_I$ 
such that $\deg I\ge r$ (i.e. $|I_{>d}|\ge r)$; the cyclic vector of $D_r$ is $v_{I(r)}$.
The Schubert variety $S_r=S(\sigma_r)$ consists of 
subspaces $U\in \Gr_d(V)$ such that $\dim U\cap V_{n-d}\ge r$
(see Lemma \ref{lem:dimint}). 
Now the truncated representation $L_{\sigma_r}(\om_d)=L(\om_d)/D(\sigma_r\om_d)$ is
exactly $(\Lambda^dV)_{r-1}$ and $X_{r-1}=F_{\sigma_r}(\om_d)$.

\section*{Acknowledgments}
We are grateful to Alexander Kuznetsov for generously sharing his ideas and for 
patient explanations. We are also grateful to Viktoriia Borovik and Svala Sverrisd\'ottir
for useful discussions about graph closures.
This work was partially supported by the ISF grant 493/24.

\appendix 
\section{Twisted ideal sheaf}\label{sec:app}
In this section we provide explicit computation of the space of sections of the twisted 
ideal sheaf of the Schubert variety $S_r\subset\Gr_d(V)$.  
We identify $H^0(\eI_{S_r}(1))$ with 
the dual of the truncated module $(\Lambda^d V)_r$. We prepare the following lemma.

\begin{lem}\label{lem:surj}
	The $\fp^-_{d}$ module $\Lambda^r V_{n-d}^*\T \Lambda^r V$ surjects onto  $(\Lambda^d V)^*_r$.
\end{lem}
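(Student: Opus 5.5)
The plan is to dualize and instead build an injective $\fp^-_d$-equivariant map $(\Lambda^dV)_r\hookrightarrow \Lambda^rV_{n-d}\T\Lambda^rV^*$. Equivalently, I will produce a $\fp^-_d$-equivariant bilinear pairing between $(\Lambda^dV)_r$ and $\Lambda^rV_{n-d}^*\T\Lambda^rV$ that is nondegenerate on the $(\Lambda^dV)_r$ side.

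The natural candidate comes from contraction. Take $\xi\in\Lambda^rV_{n-d}^*$ and $u\in\Lambda^rV$, and fix the identification $\Lambda^dV\simeq\Lambda^{n-d}V^*$ (up to the determinant character, which is trivial for $SL_n$). Note that $\Lambda^rV_{n-d}^*=\Lambda^r(V/V_d)^*\subset\Lambda^rV^*$ is the annihilator-type subspace, so it is $\fp^-_d$-stable, because $P^-_d$ preserves $V_{n-d}$ and hence acts on $V/V_d$. For $x\in\Lambda^dV$ I pair $x$ with $\xi\T u$ by
\[
\langle x,\xi\T u\rangle=\langle \iota_\xi x\wedge u,\ \mathrm{vol}^*\rangle ,
\]
where $\iota_\xi x\in\Lambda^{d-r}V$ is the contraction and $\iota_\xi x\wedge u\in\Lambda^dV$. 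This last element must still be paired to a scalar, so I instead use the dimension-matched version $x\wedge u\wedge(\text{something})$. Concretely, I will use the pairing of $\Lambda^{d-r}V\T\Lambda^rV\to\Lambda^dV$ followed by evaluation against the highest-weight covector $v_{[d]}^*$. I want to avoid $v_{[d]}^*$, however, since it is not $\fp^-_d$-invariant. The cleaner route is therefore to check the structure via the Levi decomposition \eqref{eq:Hom}.

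As $\fl_d$-modules, $\Lambda^rV=\bigoplus_{i}\Lambda^{r-i}V_d\T\Lambda^iV_{n-d}$. Tensoring with $\Lambda^rV_{n-d}^*$ and using the contraction $\Lambda^rV_{n-d}^*\T\Lambda^iV_{n-d}\to\Lambda^{r-i}V_{n-d}^*$ gives components $\Lambda^{r-i}V_d\T\Lambda^{r-i}V_{n-d}^*\simeq\Hom(\Lambda^{r-i}V_d,\Lambda^{r-i}V_{n-d})^*$ for $i=0,\dots,r$. These are exactly the duals of the graded pieces of $(\Lambda^dV)_r$ in \eqref{eq:Hom}. So I define the map $\Lambda^rV_{n-d}^*\T\Lambda^rV\to(\Lambda^dV)_r^*$ as the sum of these contractions. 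In degree $j=r-i$ each component is surjective, since contraction $\Lambda^rW^*\T\Lambda^iW\to\Lambda^{r-i}W^*$ is surjective whenever $r\le\dim W=n-d$. This gives $\fl_d$-equivariance and surjectivity.

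It remains to verify compatibility with $\fa_d$, and this is the step I expect to be the main obstacle. Here $\fa_d$ raises degree on $(\Lambda^dV)_r$, so on the dual it lowers degree. On $\Lambda^rV$, $\fa_d$ maps $V_d\to V_{n-d}$, so it raises $i$ and lowers $j=r-i$; it kills $\Lambda^rV_{n-d}^*$ because $\fa_d$ acts on $V/V_d$ trivially. The directions therefore match. The remaining work is to check that the contraction maps intertwine the two actions up to consistent signs and normalizing constants, possibly rescaling the $j$-th component by a factor such as $\pm1$ or $1/j!$.

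To avoid sign bookkeeping, I would rather realize the map intrinsically as a composition of $\fp^-_d$-maps. Start with $\Lambda^rV_{n-d}^*\T\Lambda^rV\to\Lambda^rV_{n-d}^*\T\Lambda^rV\T\Lambda^{d}V^*\T\Lambda^dV$, given by inserting the $\fp^-_d$-invariant element, which is the coevaluation id. Then apply multiplication $\Lambda^rV\T\Lambda^{d}V^{*}\to\Lambda^{d-r}V^*$ by contraction, and $\Lambda^rV_{n-d}^*\wedge\Lambda^{d-r}V^*\to\Lambda^dV^*$. The result is a $\fp^-_d$-map to $\Lambda^dV^*$ whose image one checks lies in $(\Lambda^dV)_r^*$, the annihilator of $\mathrm{span}(v_I,\deg I>r)$. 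This holds because wedging with $\Lambda^rV_{n-d}^*$ forces at least $r$ indices greater than $d$ on the $V^*$ side, so the dual statement needs care. Surjectivity then reduces to the graded Levi computation above.
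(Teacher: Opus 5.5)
\textbf{Gap.} The proposal does not establish the only nontrivial part of the lemma, namely compatibility with $\fa_d$. Your sum of contractions $\Lambda^rV_{n-d}^*\T\Lambda^iV_{n-d}\to\Lambda^{r-i}V_{n-d}^*$ is a correct $\fl_d$-equivariant surjection, but that much is immediate from \eqref{eq:Hom}. Noting that $\fa_d$ shifts the two gradings in matching directions does not show that the kernel of $\sum_i c_i\,\mathrm{contr}_i$ is $\fa_d$-stable for some constants $c_i$. That is exactly the check you leave open.

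The ``intrinsic'' construction meant to settle it fails on three counts.
\begin{itemize}
\item[(i)] $\Lambda^rV_{n-d}^*$ is not a $\fp^-_d$-submodule of $\Lambda^rV^*$. $P^-_d$ preserves $V_{n-d}$, so it acts on $V/V_{n-d}$, not on $V/V_d$; indeed $\fa_dV_d\subset V_{n-d}$ moves $V_d$. The subspace $V_d^\perp\subset V^*$ is not $\fa_d$-stable: already for $n=2$, $d=1$ one has $E_{2,1}\cdot v_2^*=-v_1^*$. As a $\fp^-_d$-module, $V_{n-d}^*$ is a quotient of $V^*$, so your wedge map $\Lambda^rV_{n-d}^*\T\Lambda^{d-r}V^*\to\Lambda^dV^*$ is not equivariant.
\item[(ii)] After inserting the coevaluation and contracting, the extra factor $\Lambda^dV$ is still there. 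The output therefore lies in $\Lambda^dV^*\T\Lambda^dV$, not in $\Lambda^dV^*$.
\item[(iii)] The degree count goes the wrong way. Wedging with $r$ covectors from $V_d^\perp$ produces $v_I^*$ with $\deg I\ge r$. But $(\Lambda^dV)_r^*$, the annihilator of $\mathrm{span}(v_I,\ \deg I>r)$, is spanned by the $v_I^*$ with $\deg I\le r$.
\end{itemize}

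\textbf{Missing idea and a repair.} Replace the quotient $\Lambda^rV_{n-d}^*$ by a genuine submodule. The space $\Lambda^{n-d-r}V_{n-d}\subset\Lambda^{n-d-r}V$ is $\fp^-_d$-stable and isomorphic to $\Lambda^rV_{n-d}^*\T\Lambda^{n-d}V_{n-d}$. This is $\Lambda^rV_{n-d}^*$ up to a character that is invisible to $\msl(V_d)\oplus\msl(V_{n-d})\oplus\fa_d$; your opening use of $\Lambda^dV\simeq\Lambda^{n-d}V^*$ was heading this way.

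The form $(x,u,y)\mapsto x\wedge u\wedge y\in\Lambda^nV\simeq\bC$ on $\Lambda^dV\times\Lambda^rV\times\Lambda^{n-d-r}V_{n-d}$ is $\fp^-_d$-invariant. Its kernel in the first variable is exactly $\mathrm{span}(v_I,\ \deg I>r)$:
\begin{itemize}
\item $v_I\wedge v_J\wedge v_K$ with $K\subset\{d+1,\dots,n\}$ and $|K|=n-d-r$ vanishes unless $\deg I\le r$;
\item for $\deg I\le r$, choose $K$ disjoint from $I$ and set $J=[n]\setminus(I\cup K)$. Then $v_J\T v_K$ pairs nontrivially with $v_I$ and with no other $v_{I'}$.
\end{itemize}
This gives $(\Lambda^dV)_r\hookrightarrow(\Lambda^rV\T\Lambda^{n-d-r}V_{n-d})^*$, and dualizing yields the surjection.

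This route differs from the paper's. The paper takes an $\fl_d$-invariant vector $u_r\in\Lambda^rV_{n-d}\T\Lambda^rV^*$ and identifies the cyclic module $\U(\fa_d)u_r$ with $(\Lambda^dV)_r$ by comparing $\fl_d$-types degree by degree via Littlewood--Richardson. The explicit invariant form avoids that computation entirely.
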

\begin{proof}
	Let us show that there exists an embedding  $(\Lambda^d V)_r\subset \Lambda^r V_{n-d}\T \Lambda^r V^*$. We rewrite
	\[
	\Lambda^r V_{n-d}\T\Lambda^r V^* \simeq \bigoplus_{k=0}^r \left(\Lambda^r V_{n-d}\T\Lambda^k V_{n-d}^*\right)\T \Lambda^{r-k} V_d^*.
	\]
	The tensor product $\Lambda^r V_{n-d}\T\Lambda^k V_{n-d}^*$ contains 
	$\Lambda^{r-k} V_{n-d}$ as a direct summand in the decomposition into irreducible 
	$\msl(V_{n-d})$ modules.  
	Hence we obtain a $\msl(V_{n-d})\oplus\msl(V_d)$ submodule 
	$\bigoplus_{k=0}^r \Lambda^{r-k} V_{n-d}\T \Lambda^{r-k} V_d^*$, which is 
	isomorphic to the restriction of $(\Lambda^dV)_r$ from $\fp^-_{d}$ to its Levi subalgebra $\msl(V_{n-d})\oplus\msl(V_d)$ of $(\Lambda^d V)_r$. Hence its suffices to prove that 
	$\bigoplus_{k=0}^r \Lambda^{r-k} V_{n-d}\T \Lambda^{r-k} V_d^*$ is a $\fp^-_{d}$  submodule of 
	$\Lambda^r V_{n-d}\T \Lambda^r V^*$.
	
	Let us consider the trivial subrepresentation inside $\Lambda^r V_{n-d}\T \Lambda^r V_d^*$
	(which sits inside $\Lambda^r V_{n-d}\T \Lambda^r V^*$). We denote by $u_r$ a generator of this
	one-dimensional $\msl(V_{n-d})\oplus\msl(V_d)$ module. Since  $(\Lambda^d V)_r$ is cyclic 
	$\fa_d$ module, we need to show that the universal  enveloping algebra $\U(\fa_d)$ generate 
	from $u_r$ a submodule isomorphic to $(\Lambda^d V)_r$. 
	
	First note that $\fa_d$ preserves $V_{n-d}$ and maps (via the dual action) $V_{n-d}^*$ to $V_d^*$.
	Hence 
	\[
	\fa_d: \Lambda^r V_{n-d} \T \left(\Lambda^k V_{n-d}^* \T \Lambda^{r-k} V_d^*\right) \to
	\Lambda^r V_{n-d} \T \left(\Lambda^{k-1} V_{n-d}^* \T \Lambda^{r-k+1} V_d^*\right).
	\]
	Since $\fa_d$ is commutative, the universal enveloping algebra of $\fa_d$ is just the 
	polynomial ring. Let us show that applying degree $k$ part $\U(\fa_d)_k$ 
	(the degree $k$ polynomials in a basis of $\fa_d$) 
	to $u_r$ we get exactly the $\msl(V_{n-d})\oplus\msl(V_d)$ module 
	$\Lambda^k V_{n-d}\T \Lambda^k V_d^*$.  
	
	Let us consider the tensor product $\Lambda^r V_{n-d}\T \Lambda^{r-k} V_{n-d}^*$. The 
	$\mgl(V_{n-d})$ module $\Lambda^r V_{n-d}$ corresponds to the partition $(1^r)$; the module
	$\Lambda^{r-k} V_{n-d}^*$ corresponds to the partition $(1^{n-d-r+k})$. By the 
	Littelwood-Richardson rule the tensor product of these modules decomposes into several summands
	with one of them corresponding to the partition $(2^k1^{n-d})$. All other summands are labeled
	by partitions whose number of parts is strictly less than $n-d$ and the sum of parts 
	equal to $n-d+k$. We note that non of these summands tensored by $\Lambda^k V_d^*$ 
	may show up in the decomposition of $\U(\fa_d)_ku_r$ into irreducible $\msl(V_{n-d})\oplus\msl(V_d)$
	modules, since $\fa_d\simeq V_{n-d}\T V_d^*$ and $u_r$ generates a trivial module; hence
	in the decomposition of $\U(\fa_d)_ku_r$ into irreducible $\msl(V_{n-d})$ modules one sees
	only representations labeled by partitions with the sum of the parts equal to $k$. 
	Now getting back to the module corresponding to $(2^k1^{n-d})$, we see that as $\msl(V_{n-d})$ 
	module it is isomorphic to $\Lambda^k V_{n-d}$. Hence 
	\[
	\U(\fa_d)u_r\simeq (\Lambda^d V)_r\subset \Lambda^r V_{n-d}\T \Lambda^r V^*.
	\]      
\end{proof}

Now we are ready to prove the following proposition.

\begin{prop}\label{prop:H0Gr}
	The sheaf $\eI_{S_r} (1)$ is globally generated and one has an isomorphism of $\fp^-_{d}$ modules 	
	\[
	H^0(\Gr_d(V),\eI_{S_r}(1))\simeq (\Lambda^d V)^*_r.
	\]
\end{prop}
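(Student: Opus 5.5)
The plan is to compute $H^0(\eI_{S_r}(1))$ and check global generation through the resolution of Corollary \ref{cor:I(1)}, and then to identify the resulting representation with the truncated module using Lemma \ref{lem:surj}. As a sanity check I will first confirm the answer by a direct argument. Schubert varieties are projectively normal, so the restriction $H^0(\Gr_d(V),\eO(1))\to H^0(S_r,\eO(1))$ is surjective. Its source is $(\Lambda^dV)^*$ and its target is $D(\sigma_r\om_d)^*$, so the kernel $H^0(\eI_{S_r}(1))$ is the annihilator of the Demazure module of Lemma \ref{lem:Demmod}. This annihilator is spanned by the Pl\"ucker coordinates $\Delta_I$ of low degree, i.e. by the dual of the corresponding truncated quotient $(\Lambda^dV)_r$ (with the degree shift convention used in Theorem \ref{thm:main}). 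This already gives the isomorphism as vector spaces, and the isomorphism is $\fp^-_d$-equivariant because the Demazure module is $\fp^-_d$-invariant.

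For global generation, and for a presentation compatible with the blow-up construction, I will use the resolution
\[
0\to \eF_{r^2}(1)\to\dots\to\eF_1(1)\to\eI_{S_r}(1)\to 0 .
\]
By Example \ref{ex:Fr},
\[
\eF_1(1)\simeq \Sigma_{(0^{r-1}(-1)^{n-d-r+1})}V_{n-d}^*\T \Sigma_{(1^{n-d-r+1}0^{r-1})}\eU_d^\perp(1).
\]
Twisting by $\eO(1)=\Sigma_{(-1,\dots,-1)}\eU_d^\perp$ turns the Schur functor part into $\Sigma_{(0^{n-d-r+1},(-1)^{r-1})}\eU_d^\perp\simeq \Lambda^{r-1}(V(\eO)/\eU_d)$. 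This bundle is a quotient of the trivial bundle $\Lambda^{r-1}V\T\eO$, so it is globally generated, and hence $\eF_1(1)$ is globally generated. Since $\eF_1(1)\to\eI_{S_r}(1)$ is surjective, $\eI_{S_r}(1)$ is globally generated as well.

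Next I will compute the sections by splitting the resolution into short exact sequences and running the hypercohomology spectral sequence. The map $H^0(\eF_1(1))\to H^0(\eI_{S_r}(1))$ is surjective as soon as $H^{k-1}(\eF_k(1))=0$ for all $k\ge2$. I will check this vanishing summand by summand on the $\eT_\la(1)$ using \eqref{eq:BWB}. For $\eT_\la(1)=\mathrm{const}\T\Sigma_{\la^t-(1^{n-d})}\eU_d^\perp$, the weight $\kappa+\rho$ has a repeated entry unless $\la^t-(1^{n-d})$ has a very restricted shape. In the surviving cases the relevant cohomological degree is a count of inversions, which I expect to be too small to equal $k-1$ when $k=|\la|-\ell(\sigma)$. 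By Lemma \ref{lem:H^0}, the only summands with nonzero $H^0$ sit in $\eF_{r-m}$. The ones in $\eF_k$ with $k\ge2$ must therefore be killed by the differential, which I will verify using $\fp^-_d$-equivariance. The outcome is that $H^0(\eI_{S_r}(1))$ is the quotient of $H^0(\eF_1(1))\simeq \Lambda^rV_{n-d}^*\T\Lambda^rV$ (up to the one-dimensional determinant twist). Lemma \ref{lem:surj} exhibits $(\Lambda^dV)_r^*$ as an equivariant quotient of exactly this module. Comparing with the dimension count from the first paragraph, namely $\sum_i\binom{d}{i}\binom{n-d}{i}$, I conclude that the quotient is $(\Lambda^dV)_r^*$.

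The main obstacle is the BWB bookkeeping needed for the vanishing of $H^{k-1}(\eF_k(1))$ for every $\la$ with $|\la|-\ell(\sigma)=k\ge 2$, together with matching the image of $H^0(\eF_1(1))$ with the submodule $\U(\fa_d)u_r$ of Lemma \ref{lem:surj}. If that bookkeeping becomes unwieldy, I will fall back on the projective normality argument of the first paragraph for the isomorphism, and use the resolution only for global generation.
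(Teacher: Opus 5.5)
Your proof is correct once the index is fixed (see below), provided you take its fallback form: the first paragraph for the isomorphism and the second for global generation. Global generation goes exactly as in the paper. You use the surjection $\eF_1(1)\to\eI_{S_r}(1)$ of Corollary~\ref{cor:I(1)} together with global generation of $\Sigma_{(0^{n-d-r+1},(-1)^{r-1})}\eU_d^\perp\simeq\Lambda^{r-1}(V(\eO)/\eU_d)$; your weight here is the correct one.

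For the isomorphism, your first paragraph takes a genuinely different route from the paper's proof of this proposition. The paper stays inside the resolution:
it takes global sections of Corollary~\ref{cor:I(1)};
it uses Lemma~\ref{lem:H^0} to get an exact complex presenting $H^0(\eI_{S_r}(1))$ as a quotient of $\Lambda^{r-1}V_{n-d}^*\T\Lambda^{r-1}V$;
it matches this quotient with $(\Lambda^dV)^*_{r-1}$ via Lemma~\ref{lem:surj} and an Euler-characteristic computation of $\fl_d$-modules in the Grothendieck ring, using the Pieri rule.

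You instead read off $H^0(\eI_{S_r}(1))$ directly as the annihilator, in $H^0(\eO(1))=(\Lambda^dV)^*$, of the Demazure module. This is the argument the paper itself gives in Section~\ref{sec:general} for general $G/B$. Your route is much shorter, makes $\fp^-_d$-equivariance automatic, and works for every Schubert variety. The paper's route buys an explicit presentation of the section space, with its linear syzygies, coming from the resolution, at the price of the character computation.

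You do not even need projective normality or Demazure's theorem. A section of $\eO(1)$ lies in $H^0(\eI_{S_r}(1))$ iff its linear form vanishes on the cone over $S_r=\overline{B_-[v_{I(r)}]}$. That means it annihilates the linear span of $B_-v_{I(r)}$, which is the smallest $B_-$-stable subspace containing $v_{I(r)}$, namely $\U(\fb_-)v_{I(r)}$.

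\textbf{First correction: the index.} The Demazure module of $S_r$ is $\U(\fn_-)v_{I(r)}=\mathrm{span}(v_I:\ \deg I\ge r)$, which is Lemma~\ref{lem:Demmod} with $r-1$ in place of $r$. Its annihilator is spanned by the $\Delta_I$ with $\deg I\le r-1$. So what you actually prove is $H^0(\eI_{S_r}(1))\simeq(\Lambda^dV)^*_{r-1}$, of dimension $\sum_{i=0}^{r-1}\binom{d}{i}\binom{n-d}{i}$. The printed $(\Lambda^dV)^*_r$ is an off-by-one slip in the statement: already for $r=1$, $H^0(\eI_{S_1}(1))=\bC\Delta_{[d]}$ is one-dimensional. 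The paper's own argument (see \eqref{eq:surjsec}), Theorem~\ref{thm:main} and Section~\ref{sec:general} all use $(\Lambda^dV)_{r-1}$. Say this outright rather than invoking a ``degree shift convention''.

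\textbf{Second correction: drop your third paragraph.} It has two problems.
\begin{enumerate}
\item $H^0(\eF_1(1))$ is $\Lambda^{r-1}V_{n-d}^*\T\Lambda^{r-1}V$ up to a determinant twist, as your own bundle $\Lambda^{r-1}(V(\eO)/\eU_d)$ shows. It is not $\Lambda^rV_{n-d}^*\T\Lambda^rV$.
\item Two equivariant quotients of one module with the same dimension need not be isomorphic, so the dimension comparison concludes nothing. This is precisely why the paper compares full classes in the Grothendieck ring.
\end{enumerate}
The cohomology vanishing you flagged as the main obstacle is in fact immediate. For $\eT_\la(1)$ one has $\kappa=(0^d,\la^t-(1^{n-d}))$ in \eqref{eq:BWB}. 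Either $\la^t_1\ge 2$, in which case $\kappa+\rho$ has a repeated entry, or $\kappa$ is dominant. Hence no $\eF_k(1)$ has higher cohomology.
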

\begin{proof}
	Corollary \ref{cor:I(1)} and Example \ref{ex:Fr} give a surjection 
	\begin{multline*}
		\eF_1(1) \simeq \eT_{(n-d-r+1,0^{r-1})}(1) \simeq\\ 
		\Sigma_{(0^{r-1},(-1)^{n-d-r+1})}V_{n-d}^*\T \Sigma_{(0^{r-1},(-1)^{n-d-r+1})}\eU^\perp_d
		\to \eI_{S_r}(1).
	\end{multline*}
	Since 	$\Sigma_{(0^{r-1},(-1)^{n-d-r+1})}\eU^\perp_d$ is globally generated, the twisted ideal sheaf 
	is globally generated as well. We also get a surjection on the level of sections
	\begin{equation}\label{eq:surjsec}
		H^0(\eF_1)\simeq \Lambda^{r-1}V^*_{n-d}\T \Lambda^{r-1} V \to H^0(\eI_{S_r}(1)),
	\end{equation}
	since 	$(0^{r-1},(-1)^{n-d-r+1})=((-1)^{n-d})+(1^{r-1},0^{n-d-r+1})$ and adding
	weight $((-1)^{n-d})$ does not change the restriction of the corresponding $\mgl(V_{n-d})$ module to $\msl(V_{n-d})$. 
	Lemma \ref{lem:surj} implies that in order to complete the proof it suffices 
	to show that the kernel of the map
	\eqref{eq:surjsec} coincides with the kernel of map 
	$\Lambda^{r-1}V^*_{n-d}\T \Lambda^{r-1} V \to (\Lambda^d V)^*_{r-1}$ 
	
	Lemma \ref{lem:H^0} and Corollary \ref{cor:I(1)} imply that there exists the following exact sequence
	\begin{multline*}
		0\to \Sigma_{(0^{r-1},(-1)^{n-d-r},1-r)}V^*_{n-d}\T  V \to 
		\Sigma_{(0^{r-1},(-1)^{n-d-r},2-r)}V^*_{n-d}\T \Lambda^2 V \to\\
		\dots\to
		\Sigma_{(0^{r-1},(-1)^{n-d-r+1})}V^*_{n-d}\T \Lambda^{r-1} V \to H^0(\eI_{S_r}(1))\to 0.	
	\end{multline*}	
	The $\mgl(V_{n-d})$ module $\Sigma_{(0^{r-1},(-1)^{n-d-r},s-r)}V^*_{n-d}$ is isomorphic to 
	the irreducible representation $\Sigma_{((r-s),1^{n-d-r})}V_{n-d}$.
	Hence the terms in the above exact sequence are written as (here $s=0,\dots,r-1$)
	\[
	\Sigma_{((r-s),1^{n-d-r})}V_{n-d} \T \Lambda^s V \simeq 
	\bigoplus_{m=0}^s \left(\Sigma_{((r-s),1^{n-d-r})}V_{n-d} \T \Lambda^{s-m} V_{n-d}\right) \T \Lambda^mV_d.
	\]
	Let us denote the right hand side by $M_s$; hence, $M_s$ are $\msl(V_{n-d})\oplus\msl(V_d)$
	modules. 
	We want to compute the Euler characteristic of $[M_s]$ (starting from $s=r-1$), where 
	for a $\msl(V_{n-d})\oplus\msl(V_d)$ module $M$ we denote by $[M]$ its element 
	in the Grothendieck ring, i.e. a formal linear combination of its irreducible summands.
	Our goal is to show that 
	\begin{equation}\label{eq:Groth}
		\sum_{s=0}^{r-1} [M_s](-1)^{r-1-s} = \bigoplus_{m=0}^{r-1} \Lambda^m V_{n-d}^*\T \Lambda^m V_d. 	
	\end{equation}	 
	Clearly, each $M_s$ is a direct sum of terms $R_m\T \Lambda^m V_d$ for some $\msl(V_{n-d})$ 
	representations $R_m$. One has 
	\begin{multline}\label{eq:Rm}
		[R_m] = [\Sigma_{(1^{n-d-r+1})}\T \Sigma_{(1^{r-m-1})}] - [\Sigma_{(21^{n-d-r})}\T \Sigma_{(1^{r-m-2})}] +\\
		[\Sigma_{(31^{n-d-r})}\T \Sigma_{(1^{r-m-3})}] -\dots  + (-1)^{r-m+1}
		[\Sigma_{((r-m),1^{n-d-r})}\T \Sigma_{(0)}],
	\end{multline} 
	where all the Schur functors are applied to $V_{n-d}$ and we omit zeroes at the end of partitions
	(to simplify the formula).
	We compute \eqref{eq:Rm} using the Littelwood-Richardson rule (to be precise, we only need
	the Pieri formulas).
	
	All the summands showing up in \eqref{eq:Rm} are the tensor products of a thin hook and 
	a one column partition; in particular, one of the factors is always a wedge power of $V_{n-d}$.
	Hence, applying the Pieri formula, one gets a linear combination of 
	(classes of) representations labeled by partitions of the form $(N,2^a,1^b)$ (as above,
	zeroes at the end are omitted). For each $(N,a,b)$ with $N>1$ the corresponding summand
	shows up twice in \eqref{eq:Rm} in two adjacent terms and hence cancels. For $N=1$ (and, hence,
	$a=0$) the corresponding term show up only in the tensor product
	\[
	[\Sigma_{(1^{n-d-r+1})}\T \Sigma_{(1^{r-m-1})}],\ n-d-m = b+1
	\]
	with a positive sign. We conclude that $[R_m]=[\Sigma_{(1^{n-d-m},0^m)}V_{n-d}] = 
	[\Lambda^m V_d^*]$ and hence $H^0(\eI_{S_r}(1))\simeq (\Lambda^dV)_r^*$.
\end{proof}

\end{document}